\newtheorem{theorem}{Theorem}
\newtheorem{lemma}[theorem]{Lemma}
\newtheorem{proposition}[theorem]{Proposition}
\theoremstyle{definition}
\newtheorem{definition}{Definition}
\newtheorem{example}[definition]{Example}
\tikzstyle{vertex} = [fill,shape=circle,node distance=80pt]
\tikzstyle{chosen} = [draw, ultra thick,circle,node distance=80pt]
\tikzstyle{edge} = [fill,opacity=.5,fill opacity=.5,line cap=round, line join=round, line width=50pt]
\tikzstyle{elabel} =  [fill,shape=circle,node distance=30pt]
\numberwithin{equation}{section}
\begin{document}

\title{T(1) theorem for dyadic singular integral forms associated with hypergraphs}


\author[M. Stip\v{c}i\'{c}]{Mario Stip\v{c}i\'{c}}
\address{Mario Stip\v{c}i\'{c}, Department of Mathematics, Faculty of Science, University of Zagreb, Bijeni\v{c}ka cesta 30, 10000 Zagreb, Croatia}
\email{mstipcic@math.hr}

\date{\today}

\subjclass[2010]{
Primary 42B20; 
Secondary 05C65} 


\begin{abstract}
This paper studies dyadic singular integral forms associated with $r$-partite $r$-uniform hypergraphs such that all their connected components are complete. We characterize their $\textup{L}^p$ boundedness by $T(1)$-type conditions in two different ways. We also dominate these forms by positive sparse forms and prove weighted estimates with multilinear Muckenhoupt weights.
\end{abstract}

\maketitle


\section{Introduction} \label{prvaprava}

Entangled multilinear singular integral forms have been studied by several authors over the last ten years; see the papers by Kova\v{c} \cite{K11}, \cite{K12}, Kova\v{c} and Thiele \cite{KT13}, Durcik \cite{D15}, \cite{D17}, and Durcik and Thiele \cite{DT18}. They recently found applications in ergodic theory \cite{K16}, \cite{DKST19}, in arithmetic combinatorics \cite{DK18}, \cite{DKR18}, to stochastic integration \cite{KS15}, and within the harmonic analysis itself \cite{DKT16}, \cite{DR18}. Therefore, it would be useful to have a reasonably general theory establishing (or characterizing) $\textnormal{L}^p$ bounds for these objects. As a step in this program we take results from the papers \cite{K11} and \cite{KT13}, where the forms are dyadic and indexed by bipartite graphs, and generalize them to $r$-partite $r$-uniform hypergraphs. Some higher-dimensional instances of dyadic entangled forms were already discussed by Kova\v{c} \cite{K12} and Durcik \cite{D14}, but our hypergraph generalization prefers a combinatorial description of the structure over a geometric one. Consequently, we can study less symmetric entangled forms and show their estimates in an open range of $\textnormal{L}^p$ spaces.

Working in a dyadic model certainly limits the applicability of our results, but this choice is justified in several ways. First, quite often dyadic models help in developing the techniques that are used later to approach the original, continuous-type problems. The reader can compare the present paper with the work of Durcik and Thiele \cite{DT18}, which is the current state-of-the-art on the continuous singular entangled forms. Second, in some applications it is possible to transfer an estimate easily from dyadic to continuous setting; see \cite{K12} and \cite{DR18}. Third, below we formulate an entangled T(1) theorem for dyadic forms associated with hypergraphs. Even its particular case dealing with graphs, which was discussed in \cite{KT13}, has not yet been formulated in the continuous setting and leaves an interesting open problem.

\begin{definition} A \emph{hypergraph} is an ordered pair $(V,E)$, where $V$ is a set of elements which we call \emph{vertices} and $E$ is a collection of nonempty subsets of $V\!$; the elements of $E$ are called \emph{edges}. Let $r \in \mathbb{N}$. A hypergraph $(V,E)$ is called \emph{$r$-partite} if there exists a partition of $V$ into $r$ nonempty parts $\left( V^{(i)} \right)_{1 \leq i \leq r}$ such that one cannot find $i \in \left\{ 1, 2, \dots, r \right\}$ and vertices $x,y \in V^{(i)}, x \neq y$ for which there would exist $e \in E$ such that $x, y \in e$. A hypergraph $(V,E)$ is called \emph{$r$-uniform} if each edge $e \in E$ has the cardinality $|e|=r$.
\end{definition}

Notice that every edge $e$ of an $r$-partite $r$-uniform graph $(V,E)$ with an associated $r$-partition of the vertex set $V = \bigcup_{i=1}^r V^{(i)}$ satisfies $\left| e \cap V^{(i)} \right| = 1$ for every $i \in \left\{ 1, 2, \dots, r \right\}$. In other words, each edge contains exactly one vertex from each of the vertex-partition parts. In this situation each edge can be identified with an element of $V^{(1)} \times V^{(2)} \times \dots \times V^{(r)} = \prod_{i=1}^r V^{(i)}$.

Each hypergraph $(V,E)$ can be partitioned into connected components, i.e.\@ there exist partitions $\left( V_j \right)_{1 \leq j \leq k}$ of $V$ and $\left( E_j \right)_{1 \leq j \leq k}$ of $E$ such that each subhypergraph $\left( V_l, E_l \right)$, $l \in \left\{ 1, 2, \dots, k \right\}$ is connected (i.e.\@ for each $x,y \in V_l$ there exist $n \in \mathbb{N}$, $v_1, \dots, v_{n-1} \in V_l$ and $e_1, \dots, e_n \in E_j$ such that $x,v_1 \in e_1, v_1,v_2 \in e_2, \dots, v_{n-1},y \in e_n$) and maximal (i.e.\@ it is not contained in any other connected subgraph of $(V,E)$). For each such $l$ and for each $i \in \{ 1, \dots, r \}$ we define $V^{(i)}_l := V_l \cap V^{(i)}.$ This makes $(V^{(i)}_l)_{i=1}^r$ an $r$-partition of the set $V_l$, which goes along with the hypergraph $H_l$ being $r$-partite. For each $e \in E$, taking the unique $l \in \left\{ 1, \dots, k \right\}$ such that $e \in E_l$, we define
\begin{equation}d_e := \max\limits_{1 \leq i \leq r} \prod\limits_{\substack{1 \leq j \leq r \\ j \neq i}} | V^{(j)}_l |. \label{thresholds}\end{equation}
In words, $d_e$ is the product of cardinalities of the $r-1$ largest vertex-partition parts of the connected component containing $e$. These quantities will turn out to be important in determining the ranges of exponents of the estimates to follow.

We will say that an $r$-partite $r$-uniform hypergraph $H = (V,E)$ is \emph{a complete hypergraph} if $E = \prod_{i=1}^r V^{(i)}$. In this article we are only considering the forms represented by the hypergraphs such that all of their connected components are complete hypergraphs. Denote $n_i := |V^{(i)}|$ and $n := |V| = \sum_{i=1}^{r}n_i$. We are going to work with the assumption $\min_{1 \leq i \leq r} n_i \geq 2,$ from which we easily deduce that
\begin{equation}\sum_{e \in E} \frac{1}{d_e} > 1 \label{eq:condition}\end{equation}
is valid. The main theorem below would not give any estimates for forms associated with hypergraphs if (\ref{eq:condition}) failed.

For $r \in \mathbb{N}$, we define
$$\mathcal{C}_r := \bigg\{ \prod_{i=1}^r \big[ 2^kl_i, 2^k\left( l_i+1 \right) \big> : k, l_i \in \mathbb{Z}, i=1,2,\dots,r \bigg\},$$
the set of dyadic cubes in $\mathbb{R}^r$. Also, for nonnegative quantities $A$ and $B$ we write $A\lesssim B$ if $A\leq C B$ holds with some unimportant finite constant $C$.

The following setting is a higher-dimensional multilinear generalization of the dyadic setup from the paper \cite{AHMTT02} by Auscher, Hofmann, Muscalu, Tao, and Thiele. Let $K: \mathbb{R}^n \rightarrow \mathbb{C}$ be \emph{a perfect dyadic Calder\' on-Zygmund kernel}, i.e.\@ a locally integrable, bounded and compactly supported function that is constant on each $n$-dimensional dyadic cube not intersecting the diagonal
\[ D = \big\{ ( \underbrace{x^{(1)}, \dots, x^{(1)}}_{n_1 \,\, \textnormal{times}}, \dots, \underbrace{x^{(r)}, \dots, x^{(r)}}_{n_r \,\, \textnormal{times}}) \in \mathbb{R}^n \big\} \]
and that, for each $\mathbbm{x} = (x_1^{(1)}, \dots, x_{n_1}^{(1)}, \dots, x_1^{(r)}, \dots, x_{n_r}^{(r)}) \in \mathbb{R}^n \backslash D$, satisfies
\begin{equation}| K(\mathbbm{x}) | \lesssim \bigg( \sum_{i=1}^r \sum_{1 \leq j_1 < j_2 \leq n_i} |x_{j_1}^{(i)}-x_{j_2}^{(i)}| \bigg)^{r-n}. \label{CalZyg}\end{equation}
For a tuple $\mathbf{F} = (F_e)_{e\in E}$  of measurable bounded functions we define
$$\Lambda_E \left( \mathbf{F} \right) := \int_{\mathbb{R}^n} \bigg( \prod_{e \in E} F_e ( \mathbbm{x}_e ) \bigg) K(\mathbbm{x}) d\mathbbm{x},$$
and, for fixed $e_0 \in E$ and for each $\mathbbm{x}_{e_0} \in \mathbb{R}^r$,
\begin{equation}T_{e_0} \left( \mathbf{F}_{E \backslash \left\{ e_0 \right\}} \right) (\mathbbm{x}_{e_0}) := \int_{\mathbb{R}^{n-r}} \bigg( \prod_{e \in E \backslash \left\{ e_0 \right\}} F_e (\mathbbm{x}_e) \bigg) K(\mathbbm{x}) \prod_{v \in V \backslash e_0} dx_v, \label{pomt}\end{equation}
where we have denoted the tuple $(F_e)_{e\in E\setminus\{e_0\}}$ simply by $\mathbf{F}_{E\setminus\{e_0\}}$. We can notice that we have
$$\Lambda_E \left( \mathbf{F} \right) = \int_{\mathbb{R}^r} T_{e_0} \left( \mathbf{F}_{E \backslash \left\{ e_0 \right\}} \right) (\mathbbm{x}_{e_0}) F_{e_0}(\mathbbm{x}_{e_0}) d\mathbbm{x}_{e_0}$$
for each $e_0 \in E$. For the statement of the main theorem, we are also going to need \emph{a dyadic BMO-seminorm}, which we define as
\begin{align*}\left\| F \right\|_{\textnormal{BMO}(\mathbb{R}^r)} := \sup_{Q\in\mathcal{C}_r} \left( \frac{1}{|Q|} \int_Q \left| F - \frac{1}{|Q|} \int_Q F \right|^2 \right)^{\frac{1}{2}}. \end{align*}

Even though we are primarily interested in the $\textup{L}^p$ estimates for the above multilinear forms, we prefer to give the arguments that also yield sparse domination. The notion of sparse collections of cubes and the associated sparse forms was introduced by Lerner \cite{L13}; the reader can also compare the dyadic setting of Lerner and Nazarov \cite{LN15}. Since we are dealing with multilinear forms, we will need the multilinear modification of the theory developed by Culiuc, Di Plinio, and Ou \cite{CPO18}, so several major concepts and many ideas of proofs will be adapted from that paper.

\begin{definition} \label{defsparse} For a fixed $c>0$ we say that $\mathcal{S} \subseteq \mathcal{C}_r$ is \emph{a sparse family} if it is a collection of dyadic cubes such that, for each $Q \in \mathcal{S}$, there exists a measurable set $E_Q \subseteq Q$ with the following properties:
\begin{itemize} \item for each $Q \in \mathcal{S}$ we have $| E_Q | \geq c|Q|$,
\item for each $Q,Q' \in \mathcal{S}, Q \neq Q'$, sets $E_Q$ and $E_{Q'}$ are mutually disjoint.
\end{itemize}
\emph{A sparse (multisublinear) form} associated with $\mathcal{S}$ is given by
$$\Theta_{\mathcal{S}}( \mathbf{F} ) := \sum_{Q \in \mathcal{S}} |Q| \prod_{e \in E} [ |F_e|^{d_e} ]_Q^{\frac{1}{d_e}}.$$
\end{definition}

As is well-known, the sparse domination also implies weighted estimated for the form in question. Once again, we merely adapt the setting from the paper \cite{CPO18} by Culiuc, Di Plinio, and Ou. Given the edge-set $E$ with its collection of integers $\textbf{d}=(d_e)_{e\in E}$ defined as before, let $\mathbf{p}=(p_e)_{e \in E}$ be an arbitrary tuple of exponents from $[1,\infty]$ such that $p_e>d_e$ for each $e\in E$ and $\sum_{e\in E}\frac{1}{p_e}=1$. Also, let $\mathbf{w} = (w_e)_{e \in E}$ be a tuple of strictly positive functions satisfying
\begin{equation} \prod_{e \in E} w_e^{\frac{1}{p_e}} = 1. \label{condMuck} \end{equation}
We will define \emph{the multilinear Muckenhoupt constant} of the tuple $\mathbf{w}$ to be an expression
$$[\mathbf{w}]_{\mathbf{p},\mathbf{d}} := \sup_{Q \in \mathcal{C}_r} \prod_{e \in E} \big[ w_e^{\frac{-d_e}{p_e-d_e}} \big]_Q^{\frac{1}{d_e}-\frac{1}{p_e}}.$$
In the following result we will consider \emph{the weighted $\textnormal{L}^p$} space along with \emph{the weight $w$}, denoted as $\textnormal{L}^p(w)$ and defined as the standard $\textnormal{L}^p$ space according to the measure $\nu$ such that $d\nu = w d\lambda$, $w$ being a nonnegative function and $\lambda$ being the standard Lebesgue measure.

We are ready to state the main result of this paper.

\begin{theorem}\label{T1thm} The following statements are equivalent. \begin{enumerate}[label=(\alph*)] 
\item The weak boundedness property
\begin{align}| \Lambda_E (\mathbbm{1}_Q, \dots, \mathbbm{1}_Q) | \lesssim |Q| \,\,\, \textnormal{for each} \,\, Q \in \mathcal{C}_r \label{first}\end{align}
and the T(1)-type conditions
\begin{align}\left\| T_e (\mathbbm{1}_{\mathbb{R}^r}, \dots, \mathbbm{1}_{\mathbb{R}^r}) \right\|_{\textnormal{BMO}(\mathbb{R}^r)} \lesssim 1 \,\,\, \textnormal{for each} \,\,\,  e \in E \label{second}\end{align}
are valid.

\item We have
\begin{equation}\| T_{e_0}( \mathbbm{1}_Q )_{e \in E \backslash \{ e_0 \}} \|_{\textnormal{L}^1(Q)} \lesssim |Q| \,\, \textnormal{for each} \,\, e_0 \in E \,\, \textnormal{and} \,\, Q \in \mathcal{C}_r. \label{assum}\end{equation}

\item The form $\Lambda_E$ satisfies the estimate
\begin{equation}| \Lambda_E \left( \mathbf{F} \right) | \lesssim \prod_{e \in E} \left\| F_e \right\|_{\textnormal{L}^{p_e}(\mathbb{R}^r)} \label{T1bound}\end{equation}
for all choices of exponents $d_e < p_e \leq \infty, e \in E,$ such that $\sum_{e \in E} \frac{1}{p_e} = 1$.

\item The form $\Lambda_E$ satisfies the estimate (\ref{T1bound}) for some choice of exponents $d_e < p_e \leq \infty, e \in E,$ such that $\sum_{e \in E} \frac{1}{p_e} = 1$.

\item For any measurable, bounded, and and compactly supported tuple of functions $\mathbf{F}$ there exists a sparse form $\Theta_{\mathcal{S}}$ for which we have $| \Lambda_E( \mathbf{F} ) | \lesssim \Theta_{\mathcal{S}}( \mathbf{F} )$.

\item Let $\mathbf{p}=(p_e)_{e \in E}$ be an arbitrary tuple of exponents from $[1,\infty]$ such that $p_e>d_e$ for each $e\in E$ and $\sum_{e\in E}\frac{1}{p_e}=1$ and $\mathbf{w} = (w_e)_{e \in E}$ a tuple of strictly positive functions satisfying (\ref{condMuck}). For each tuple $\mathbf{F} = (F_e)_{e \in E}$ we have
$$| \Lambda_E( \mathbf{F} ) | \lesssim [ \mathbf{w} ]_{\mathbf{p},\mathbf{d}}^{\max_{e \in E}\frac{p_e}{p_e-d_e}} \prod_{e \in E} \| F_e \|_{\textnormal{L}^{p_e}(w_e)}.$$
\end{enumerate}

The implicit constants in all of the above estimates depend on the hypergraph $H$, kernel $K$, the exponents in question, and they also mutually depend on each other.
\end{theorem}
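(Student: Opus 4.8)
\emph{The plan} is to prove the equivalence by running the cycle
\[ (c)\Rightarrow(d)\Rightarrow(b)\Rightarrow(a),\qquad (a)\Rightarrow(b)\Rightarrow(e)\Rightarrow(f)\Rightarrow(c), \]
so that the only substantial link is the sparse domination $(b)\Rightarrow(e)$; everything else is comparatively soft. (As usual in $T(1)$ theory, the assumptions that $K$ is bounded and compactly supported only serve to make $\Lambda_E$, the maps $T_e$, and the seminorms well defined, and I would arrange the estimates so that quantitatively they see $K$ only through \eqref{CalZyg} and the testing constants in (a),(b).) The cheap links first. $(c)\Rightarrow(d)$ holds as soon as there exists an admissible exponent tuple, which is precisely the content of \eqref{eq:condition}: if one takes $p_e=d_e$ then $\sum_e 1/p_e=\sum_e 1/d_e>1$, so rescaling $p_e\mapsto(\sum_e 1/d_e)\,d_e$ gives $p_e>d_e$ with $\sum_e 1/p_e=1$. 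For $(d)\Rightarrow(b)$, fix $e_0$ and $Q$, choose by duality $g$ with $\|g\|_{\textnormal{L}^\infty(Q)}\le 1$ and $\operatorname{supp}g\subseteq Q$ realizing $\|T_{e_0}(\mathbbm{1}_Q)_{e\in E\setminus\{e_0\}}\|_{\textnormal{L}^1(Q)}=\Lambda_E((\mathbbm{1}_Q)_{e\in E\setminus\{e_0\}},g)$, and apply \eqref{T1bound} for the tuple furnished by (d): the bound is $\prod_{e\neq e_0}|Q|^{1/p_e}\cdot|Q|^{1/p_{e_0}}=|Q|$ because $\sum_e 1/p_e=1$. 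For $(f)\Rightarrow(c)$, take all $w_e\equiv1$; then $[\mathbf{w}]_{\mathbf{p},\mathbf{d}}=1$ and $\textnormal{L}^{p_e}(w_e)=\textnormal{L}^{p_e}$, so (f) is exactly \eqref{T1bound} for every admissible tuple.

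Next, $(b)\Rightarrow(a)$ and $(a)\Rightarrow(b)$. The weak boundedness property \eqref{first} follows from (b) by duality (test $T_{e_0}(\mathbbm{1}_Q)$ against $\mathbbm{1}_Q$), and the reverse inclusion of testing conditions uses that $K$ is \emph{perfect} dyadic: given $e$ and $Q$, split each global argument into its part over a fixed dyadic dilate $Q^*$ of $Q$ and its complement; since $K$ is constant on dyadic cubes off the diagonal $D$ and obeys \eqref{CalZyg}, the complementary part of $T_e(\cdot)$ equals a constant up to an $\textnormal{L}^1(Q)$-error of size $\lesssim|Q|$, while the local part is exactly a single-scale testing quantity. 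Running this in both directions interchanges $\|T_e(\mathbbm{1}_{\mathbb{R}^r},\dots,\mathbbm{1}_{\mathbb{R}^r})\|_{\textnormal{BMO}(\mathbb{R}^r)}\lesssim1$ with $\|T_e(\mathbbm{1}_Q,\dots,\mathbbm{1}_Q)\|_{\textnormal{L}^1(Q)}\lesssim|Q|$; the only non-bookkeeping input is the exact cancellation from perfect dyadicity, which here substitutes for the kernel smoothness used in the continuous theory. Finally $(e)\Rightarrow(f)$ is the multilinear sparse-to-weighted transference of Culiuc--Di Plinio--Ou \cite{CPO18}: insert the weights, apply H\"older with exponents $p_e/d_e$ cube by cube, and sum using sparseness and a Carleson embedding; the power $\max_e\tfrac{p_e}{p_e-d_e}$ on $[\mathbf{w}]_{\mathbf{p},\mathbf{d}}$ is what this computation produces. (In particular $(e)\Rightarrow(c)$, since $\sum_{Q\in\mathcal{S}}|Q|\prod_e[|F_e|^{d_e}]_Q^{1/d_e}\lesssim\prod_e\|M_{d_e}F_e\|_{\textnormal{L}^{p_e}}\lesssim\prod_e\|F_e\|_{\textnormal{L}^{p_e}}$ exactly when every finite $p_e$ exceeds $d_e$, where $M_{d_e}$ denotes the maximal operator built from the $d_e$-averages.)

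This leaves the core, $(b)\Rightarrow(e)$, which I would organize along the lines of \cite{CPO18}. First reduce to $\mathbf{F}$ bounded and supported in a single dyadic cube and, by averaging over finitely many shifts of the grid, to one fixed grid. Perfect dyadicity of $K$ then lets me expand $\Lambda_E(\mathbf{F})$ as an absolutely convergent sum of \emph{single-scale} pieces $\Lambda_E^{(Q)}(\mathbf{F})$, $Q\in\mathcal{C}_r$, each assembled from the scale-$Q$ martingale differences of the $F_e$ and from the values of $K$ at configurations of children of $Q$ lying off $D$ --- the hypergraph analogue of the martingale decompositions underlying \cite{K11}, \cite{KT13}. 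Next run a stopping time: from the current principal cube, select its next principal descendant as soon as one of the quantities $\bigl[|F_e|^{d_e}\bigr]^{1/d_e}$ on a dyadic descendant exceeds a large multiple of its value on the current principal cube; the selected cubes form a family $\mathcal{S}$ that is sparse because each selection is charged to a fixed-proportion subset on which some $M_{d_e}$ is large. It then remains to prove, on each principal stack with top cube $Q_{\textnormal{top}}$, the single-scale estimate
\[ \Bigl|\sum_{Q\ \textnormal{in the stack}}\Lambda_E^{(Q)}(\mathbf{F})\Bigr|\ \lesssim\ |Q_{\textnormal{top}}|\,\prod_{e\in E}\bigl[|F_e|^{d_e}\bigr]_{Q_{\textnormal{top}}}^{1/d_e}, \]
and summing over $\mathcal{S}$ gives $|\Lambda_E(\mathbf{F})|\lesssim\Theta_{\mathcal{S}}(\mathbf{F})$.

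The single-scale estimate is where hypothesis (b) and the combinatorics of complete components are spent, and I expect it to be the real obstacle. Writing each $F_e$ as (its average on $Q_{\textnormal{top}}$) plus (a bounded oscillation) and expanding multilinearly, the purely ``flat'' term telescopes to a constant times $\Lambda_E(\mathbbm{1}_{Q_{\textnormal{top}}},\dots,\mathbbm{1}_{Q_{\textnormal{top}}})$, hence is $\lesssim|Q_{\textnormal{top}}|\prod_e[|F_e|]_{Q_{\textnormal{top}}}\le|Q_{\textnormal{top}}|\prod_e[|F_e|^{d_e}]_{Q_{\textnormal{top}}}^{1/d_e}$ by (b). In every mixed term at least one $F_e$ is an oscillation, and one must both sum over the scale $Q$ and recover a factor $[|F_e|^{d_e}]^{1/d_e}$ rather than the weaker $[|F_e|]$. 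This is exactly where completeness enters: inside the connected component $(V_l,E_l)$ containing $e$, the edges sharing a fixed vertex of $e$ form a full product subgrid, and this tensor structure makes available an inductive Cauchy--Schwarz in the vertex variables of one part of the component at a time (in the spirit of the argument behind the twisted paraproduct estimate), while orthogonality of the scale-$Q$ martingale differences supplies the almost-orthogonal summation over $Q$. A Cauchy--Schwarz that resolves the $s$-th part of the component forces control of $F_e$ through an average of order equal to the number of edges of $E_l$ sharing a vertex in part $s$, namely $\prod_{j\neq s}|V^{(j)}_l|$; optimizing over $s$ is precisely what produces the exponent $d_e=\max_i\prod_{j\neq i}|V^{(j)}_l|$. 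Carrying out this induction cleanly for a general complete $r$-partite $r$-uniform component, keeping the accumulating error terms under control, and checking that the $d_e$-powers assemble correctly when several components are coupled only through the single kernel $K$, is the main technical burden; the remaining sparse bookkeeping is routine.
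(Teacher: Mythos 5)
Your cycle is logically sound (the graph on $\{a,\dots,f\}$ you draw is strongly connected), and the soft links $(c)\Rightarrow(d)\Rightarrow(b)$, $(f)\Rightarrow(c)$ are essentially the paper's arguments. However, two things are genuinely different, and the second is a gap.

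First, a structural difference. The paper runs the single cycle $(a)\Rightarrow(e)\Rightarrow(f)\Rightarrow(c)\Rightarrow(d)\Rightarrow(b)\Rightarrow(a)$, so it never needs to prove $(a)\Rightarrow(b)$ or $(b)\Rightarrow(e)$ directly. Its hard link is $(a)\Rightarrow(e)$, and the organizing device is a decomposition of the \emph{kernel} $K$ in the Haar basis, $K=\sum_{S,Q}\langle K,\mathbbm{h}^S_Q\rangle\,\mathbbm{h}^S_Q$ (equation \eqref{kernel}), which splits $\Lambda_E$ into entangled dyadic paraproducts $\Lambda_E^S$ with scalar coefficients $\lambda_Q=|Q|^{-1/2}\langle K,\mathbbm{h}_Q^S\rangle$. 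Those coefficients are then what the two halves of hypothesis (a) test: the weak boundedness property \eqref{first} gives an $\ell^\infty$ bound on the cancellative (C1)/(C2) coefficients (Proposition~\ref{canc}), and the BMO condition \eqref{second} gives a dyadic $\textnormal{bmo}$ bound on the non-cancellative (NC) coefficients (Proposition~\ref{noncanc}). The subsequent tree estimate (Proposition~\ref{tree}) is a purely structural Bellman-function argument --- the $\square$-operator telescoping of Lemma~\ref{enough} together with the induction on the ordered family $\mathcal{H}_{(n_i)}$ --- and does not see (a) or (b) at all; the sparse stopping time is layered on top of that. By contrast you propose to decompose the \emph{functions} $F_e$ into averages plus oscillations at each scale, and to feed the stopping time directly with hypothesis (b). That is a legitimate alternative organization (it is closer to AHMTT's original bilinear $T(1)$ than to the paper), but it has a different cost profile: the BMO-type input of (a) never appears explicitly, and you must extract it from (b) inside the multi-scale sum, which is exactly where dyadic John--Nirenberg hides.

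Second, the gap: the implication $(b)\Rightarrow(e)$ is the only hard step on your route, and you leave it unexecuted. Your last paragraph describes the correct shape of the difficulty --- completeness of components, the inductive Cauchy--Schwarz across vertex parts, $d_e=\max_i\prod_{j\neq i}|V^{(j)}_l|$ emerging as the optimal averaging exponent --- but these are exactly the contents of Lemmas~\ref{bound}--\ref{enough} and Proposition~\ref{tree} in the paper, and you explicitly defer them (``the main technical burden'', ``I expect it to be the real obstacle''). In particular, ``orthogonality of the scale-$Q$ martingale differences supplies the almost-orthogonal summation over $Q$'' will not work as stated: the entanglement means the pieces at different scales are not orthogonal in any naive sense, and the paper replaces that hope with the telescoping $\sum_{Q\in\mathcal{T}}|Q|\,\square\mathcal{B}_{H,Q}=\sum_{Q\in\mathcal{L}(\mathcal{T})}|Q|\mathcal{B}_{H,Q}-|Q_{\mathcal T}|\mathcal{B}_{H,Q_{\mathcal T}}$ from Lemma~\ref{enough}, which is a one-sided comparison principle rather than an orthogonality identity. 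Likewise, the distinction between cancellative and non-cancellative cone pieces is essential: the non-cancellative ones (whose single selected vertex sits inside one connected component) cannot be controlled by the $\ell^\infty$ coefficient bound alone, they require the full bmo summability, and one has to verify that (b) --- after John--Nirenberg --- delivers precisely that. None of this is impossible on your route, but it is the heart of the theorem and your proposal presently states the target single-scale estimate without proving it.

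Minor remarks: your justification of $(c)\Rightarrow(d)$ conflates the (trivial) logical implication ``for all $\Rightarrow$ for some'' with the separate observation that \eqref{eq:condition} makes the exponent range nonempty; both are true, but only the former is needed for the implication. Your $(a)\Rightarrow(b)$ is asserted but not argued, and it is not in the paper either; you do not actually need it once $(b)\Rightarrow(e)$ is in place, since $(d)\Rightarrow(b)$ already closes the cycle, so I would drop it.
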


As we have already mentioned, the range of the exponents $\mathbf{p}$ appearing in parts (c) and (f) is nonempty because of \eqref{eq:condition}. In the particular case dealing with bipartite graphs (without the completeness assumption), i.e.\@ when $r=2$, the paper \cite{KT13} proceeds by studying exceptional cases, so that all nondegenerate bipartite graphs are covered with some nonempty range of exponents. We are not able to do the same here, since higher dimensions bring an additional structural complexity, and this is another reason why we find convenient to assume that each hypergraph component is complete. Indeed, the reader can see the recent paper by Durcik and Roos \cite{DR18} for an example of an open problem in dimensions $r\geq 4$, which would be resolved if we could apply our main result to the hypergraph in question.

\begin{example}Let us illustrate how the twisted paraproduct form from the paper \cite{K12} can be represented as an entangled form associated to a hypergraph.

Suppose that each of the partition classes $V^{(i)}$ has precisely two vertices and suppose that the hypergraph is complete, so that indeed $E = \prod_{i=1}^r V^{(i)},$ $|E|=2^r$. Thus, the set of edges is in a bijective correspondence with $\{ 0,1 \}^r$ and we are working with a tuple of functions $\mathbf{F} = (F_{j_1,j_2,\dots,j_r})_{j_1,j_2,\dots,j_r \in \{ 0,1 \}}$. For the kernel we take
$$K(\mathbbm{x}) := \sum_{Q = \prod_{i=1}^r I^{(i)}} |I^{(1)}|^r \mathbbm{h}_{I^{(1)}}^1(x_1^{(1)})\mathbbm{h}_{I^{(1)}}^1(x_2^{(1)}) \prod_{k=2}^r \mathbbm{h}_{I^{(k)}}^0(x_1^{(k)})\mathbbm{h}_{I^{(k)}}^0(x_2^{(k)}),$$
where the summation is performed over all dyadic cubes contained in $[-2^N,2^N\rangle^r$ with edge-length at least $2^{-N}$, for some positive integer $N$. We also remark that $\mathbbm{h}_{I^{(1)}}^0$ and $\mathbbm{h}_{I^{(1)}}^1$ are the non-cancellative and cancellative Haar functions defined at the very beginning of the next section. It is easy to verify that $K$ and the associated form $\Lambda_E$ satisfy conditions from part (a) of the T(1) theorem with
$$T_e(\mathbbm{1}_{\mathbb{R}^r},\dots,\mathbbm{1}_{\mathbb{R}^r}) = 0$$
for each $e \in E$. Consequently, we obtain $\textnormal{L}^p$ estimates for $\Lambda_E$ in the range $2^{r-1} < p_e \leq \infty$ for each $e \in E$, $\sum_{e \in E} \frac{1}{p_e} = 1.$

The most interesting case in \cite{K12} is obtained by taking $F_{j_1,j_2,\dots,j_r} = \mathbbm{1}_{\mathbb{R}^r}$ whenever $j_1+j_2+\dots+j_r \geq 2$, which leaves us with only $r+1$ nontrivial functions. For the remaining functions we need to take $p_e = \infty$, which makes the range of exponents empty unless $(r+1)\frac{1}{2^{r-1}} > 1$, i.e.\@ unless $r \leq 2$. The case $r=3$ was handled in \cite{DR18}, while the cases $r \geq 4$ are still open at the time of writing.
\end{example}

The paper is organized as follows. In Section \ref{drugaprava} we give additional definitions, which will be helpful for writing down the proofs. Section \ref{trecaprava} gives the boundedness of the form localized on the finite convex tree. This result serves as the key idea for most of the proofs that follow. In Section \ref{cetvrtaprava} one can find the results on the boundedness of localized cancellative and non-cancellative entangled paraproducts, which are obtained by performing the so-called cone decomposition of the kernel. Finally, the proof of the main theorem is completed in Section \ref{petaprava}.

\section{Notation and terminology} \label{drugaprava}

The elements of $\mathcal{C}_r$ will usually be denoted as $I_1 \times I_2 \times \dots \times I_r = \prod_{i=1}^r I_i$, with $I_1, \dots, I_r \in \mathcal{C}_1$. For $I \in \mathcal{C}_1$, let
$$\mathbbm{h}^0_I := \frac{1}{\left| I \right|} \mathbbm{1}_I, \,\,\,\,\, \mathbbm{h}^1_I := \frac{1}{\left| I \right|} \left( \mathbbm{1}_{I_L}-\mathbbm{1}_{I_R} \right),$$
where $I_L$ and $I_R$ are, in order, left and right halves of the interval $I$; more precisely, if $I = \left[ a,b \right>$ for some $a,b \in \mathbb{R}$, then $I_L := \left[ a, \frac{a+b}{2} \right>$ and $I_R := \left[ \frac{a+b}{2}, b \right>$. Function $h_I^0$ is simply the $\textnormal{L}^1$-normalized characteristic function of $I$, while $h_I^1$ is the so-called \emph{Haar function} normalized in the $\textnormal{L}^1$ sense. We will also call these, in order, \emph{non-cancellative} and \emph{cancellative} Haar functions.

Let $(V,E)$ be an $r$-partite $r$-uniform hypergraph with a fixed $r$-partition; denote $V^{(i)} = \{ v^{(i)}_1, \dots, v^{(i)}_{n_i} \}$ for each $i \in \{ 1, \dots, r \}$. Let $\left( F_e \right)_{e \in E}$ be a tuple of measurable, bounded and compactly supported functions from $\mathbb{R}^r$ to $\mathbb{R}$ and take $S = (S^{(i)})_{1 \leq i \leq r}$ with $S^{(i)} \subseteq V^{(i)}$ for each $i \in \{ 1, \dots, r \}$ and such that there exists $i_0 \in \{ 1, \dots, r \}$ such that $|S^{i_0}| \geq 2$. We define
\begin{align*}&\Lambda_{E, S} \left( \mathbf{F} \right) := \!\!\!\sum_{Q = I_1 \times \dots \times I_r \in \mathcal{C}_r}\!\!\! \left| Q \right| \int_{\mathbb{R}^n} \Big( \prod_{e \in E} F_e\left( \mathbbm{x}_e \right) \Big) \prod_{i=1}^r \Big( \prod_{v^{(i)} \in S^{(i)}} \mathbbm{h}^1_{I_i}( x_{v^{(i)}} ) \prod_{v^{(i)} \in ( S^{(i)} )^c} \mathbbm{h}^0_{I_i} ( x_{v^{(i)}} ) \Big) d\mathbbm{x},\end{align*}
where
\[\mathbbm{x} = \big( x_{v^{(1)}_1}, \dots, x_{v^{(1)}_{n_1}}, \dots, x_{v^{(r)}_1}, \dots, x_{v^{(r)}_{n_r}} \big),\]
$n=|V|$ and, for $e = (v^{(1)}, \dots, v^{(r)}) \in E$,
$$\mathbbm{x}_e := \big( x_{v^{(1)}}, \dots, x_{v^{(r)}} \big).$$
 
For the definition of the form $\Lambda$ and for the statement of the main problem we intentionally labeled functions $F_e$ with the set of edges $E$ and variables $x_{v^{(i)}}$ with the set of vertices $V$ and its $r$-partition $(V^{(i)})_{1 \leq i \leq r}$. As we will see later, we will introduce a short, compact notation which encodes all important information by just defining a certain labeled ($r$-partite and $r$-uniform) hypergraph, therefore making proofs easier to write and more practical to visualise. With this, a tuple $S$ of vertices will be considered as a tuple of \emph{selected vertices}.

\begin{definition} A \emph{labeled hypergraph} is any hypergraph $(V,E)$ along with sets $L_V$ and $L_E$, an injective function $l_V : V \rightarrow L_V$ and an arbitrary function $l_E : E \rightarrow L_E$. The elements of sets $L_V$ and $L_E$ will be called, in order, \emph{vertex labels} and \emph{edge labels}. Note that vertex labels are required to be different, but we allow the repetition of edge labels.\end{definition}

Given an $r$-partite hypergraph $(V,E)$ and the corresponding partition of $V$ as $\left( V^{(i)} \right)_{1 \leq i \leq r}$, we will usually denote vertices as $V^{(i)} = \big\{ v_1^{(i)}, v_2^{(i)}, \dots, v_{n_i}^{(i)} \big\}$ for each $i \in \left\{ 1, 2, \dots, r \right\}$. Similarly, we will write $L_V := \cup_{i=1}^r L^{(i)}_V$ and $L^{(i)}_V := \{ x^{(i)}_j : j \in \mathbb{N} \}$ for each $i \in \{ 1, \dots, r \}$; with this notation, we will assume that $l_V(V^{(i)}) \subseteq L^{(i)}_V$ for each $i \in \{ 1, \dots, r \}$. For shorter notation we may write $x_v := l_V(v)$ for each $v \in V$. The elements of sets $L_V$ and $L_E$ will be substituted with real variables and real-valued functions.

Take a tuple $\mathbf{F}=(F_l)_{l\in L_E}$ of nonnegative measurable compactly supported functions. More precisely, this is a collection of functions indexed by the set $L_E$ and these functions will be substituted in the places of edge labels in all of the following analytical expressions. For $Q=\prod_{i=1}^r I_i \in\mathcal{C}_r$ an \emph{evaluation of a tuple $\mathbf{F}$ on the hypergraph $H$}, given $S$ and $Q$ is defined as the number given by
$$\left[ \mathbf{F} \right]_{H,S,Q} := \int_{\mathbb{R}^n} \prod_{e \in E} F_{l_E(e)} (\mathbbm{x}_e) \prod_{i=1}^r \Big( \prod_{v^{(i)} \in S^{(i)}} \mathbbm{h}^1_{I_i} \big( x_{v^{(i)}} \big) \prod_{v^{(i)}\in ( S^{(i)} )^c} \mathbbm{h}^0_{I_i} \big( x_{v^{(i)}} \big) \Big) d\mathbbm{x},$$
where $S = (S^{(i)})_{1 \leq i \leq r}$ and $S^{(i)} \subseteq V^{(i)}$ for each $i \in \left\{ 1, 2, \dots, r \right\}$. This expression will also be called \emph{paraproduct-type term}. In particular, if each $S^{(i)} = \emptyset$, then the mapping $\mathcal{A}\colon \mathbf{F} \mapsto [\mathbf{F}]_{H,S,Q}$ will be called an \emph{averaging paraproduct-type term}. Also, any linear combination of paraproduct-type terms will be called a \emph{paraproduct-type expression}. Note that the form $\Lambda_{E,S}$ that we are trying to bound has much more compact notation now:
$$\Lambda_{E,S} \left( \mathbf{F} \right) = \sum_{Q \in \mathcal{C}_r} |Q| \left[ \mathbf{F} \right]_{H,S,Q},$$
where the initial labelling of edges is given by $l_E(e):=F_e$.

For a complex-valued locally integrable function $F$ and any dyadic cube $Q$ we also introduce the notation
\[ [F]_Q := \frac{1}{|Q|}\int_Q F(x) dx. \]
In words, $[F]_Q$ is simply the average of $F$ on $Q$.

Throughout the whole paper we are going to work with the functions $F_e, e \in E$ that are nonnegative, as the general result will follow by representing each of these functions as a difference of its positive and negative parts.

\section{Boundedness of dyadic singular integral forms associated with hypergraphs} \label{trecaprava}

For each $r \in \mathbb{N}$ and $Q \in \mathcal{C}_r$ there exist exactly $2^r$ disjoint cubes $Q_1, \dots, Q_{2^r} \in \mathcal{C}_r$ such that $\left| Q_1 \right| = \dots = \left| Q_r \right| = 2^{-r}|Q|$ and $Q_1, \dots, Q_{2^r} \subseteq Q$. These $Q_i, i \in \left\{ 1, \dots, 2^r \right\}$ are called the \textit{children} of $Q$, while $Q$ is the \textit{parent} of $Q_1, \dots, Q_{2^r}$. The family of children of a cube $Q$ will be denoted as $\mathcal{C}(Q)$.

\begin{definition} Let $r \in \mathbb{N}$. A \emph{tree} is a family $\mathcal{T} \subseteq \mathcal{C}_r$ for which there exists $Q_{\mathcal{T}} \in \mathcal{T}$ such that $Q \subseteq Q_{\mathcal{T}}$ for every $Q \in \mathcal{T}$; such $Q_{\mathcal{T}}$ is called a \emph{root} of the tree $\mathcal{T}$. A tree $\mathcal{T}$ is called \emph{convex} if for every $Q_1,Q_3 \in \mathcal{T}$ and $Q_2 \in \mathcal{C}_r$ the inclusion $Q_1 \subseteq Q_2 \subseteq Q_3$ implies $Q_2 \in \mathcal{T}$. A \emph{leaf} of the tree $\mathcal{T}$ is any $Q \in \mathcal{C}_r \backslash \mathcal{T}$ with the parent $Q' \in \mathcal{T}$. A family of these cubes will be marked as $\mathcal{L} \left( \mathcal{T} \right)$.
\end{definition}

Given $Q \in \mathcal{C}_r$, for an expression $\mathcal{B} = \mathcal{B}_Q \left( \mathbf{F} \right)$ we define
$$\square \mathcal{B}_Q \left( \mathbf{F} \right) := \sum_{Q' \in \mathcal{C}(Q)} \frac{1}{2^r} \mathcal{B}_{Q'}\left( \mathbf{F} \right) - \mathcal{B}_Q \left( \mathbf{F} \right).$$
The operator $\Box$ can be thought of as a certain discrete version of the Laplace operator.

\begin{proposition} \label{difference} For any cube $Q$, the first-order difference of the averaging paraproduct-type term $\mathcal{B}_{H,Q} = \left[ \mathbf{F} \right]_{H,(\emptyset),Q}$ is the paraproduct-type expression
$$\square \mathcal{B}_{H,Q} = \!\!\!\!\!\!\!\!\!\!\!\!\!\!\!\!\!\!\!\!\sum_{\substack{\left( \forall i \in \left\{ 1, \dots, r \right\} \right) S^{(i)} \subseteq V^{(i)}, \left| S^{(i)} \right| \, \mathrm{even} \\ \left( \exists i_0 \in \left\{ 1, \dots, r \right\} \right) \left| S^{(i_0)} \right| \neq 0}}\!\!\!\!\!\!\!\!\!\!\!\!\!\!\!\!\!\!\!\! \left[ \mathbf{F} \right]_{H,S,Q}.$$
\end{proposition}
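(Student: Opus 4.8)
The plan is to unfold the definitions and reduce the claim to an elementary one-dimensional identity for Haar functions, applied one coordinate at a time. The one-dimensional fact I would use is that for $I\in\mathcal{C}_1$ with left and right halves $I_L,I_R$ one has $\mathbbm{h}^0_{I_L}=\mathbbm{h}^0_I+\mathbbm{h}^1_I$ and $\mathbbm{h}^0_{I_R}=\mathbbm{h}^0_I-\mathbbm{h}^1_I$, which is verified at once by evaluating both sides separately on $I_L$ and on $I_R$. Introducing the sign $\sigma(L):=1$, $\sigma(R):=-1$ and writing a child $Q'\in\mathcal{C}(Q)$ of $Q=\prod_{i=1}^r I_i$ as $Q'=\prod_{i=1}^r I_i^{\epsilon_i}$ for $\epsilon=(\epsilon_1,\dots,\epsilon_r)\in\{L,R\}^r$, this reads $\mathbbm{h}^0_{I_i^{\epsilon_i}}=\mathbbm{h}^0_{I_i}+\sigma(\epsilon_i)\,\mathbbm{h}^1_{I_i}$.

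Next I would substitute this into $\mathcal{B}_{H,Q'}=[\mathbf{F}]_{H,(\emptyset),Q'}$, leaving the factors $F_{l_E(e)}(\mathbbm{x}_e)$ and the integration over $\mathbb{R}^n$ untouched. For each fixed $i$, expanding
\[
\prod_{v^{(i)}\in V^{(i)}}\big(\mathbbm{h}^0_{I_i}(x_{v^{(i)}})+\sigma(\epsilon_i)\,\mathbbm{h}^1_{I_i}(x_{v^{(i)}})\big)
\]
according to which vertices carry the cancellative factor yields a sum over subsets $S^{(i)}\subseteq V^{(i)}$, each term weighted by $\sigma(\epsilon_i)^{|S^{(i)}|}$. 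Multiplying over $i=1,\dots,r$ and integrating, every Haar function that appears is now attached to the parent intervals $I_i$, so the outcome is a genuine paraproduct-type expression in the sense of Section~\ref{drugaprava} and
\[
\mathcal{B}_{H,Q'}=\sum_{S=(S^{(i)})_{i=1}^r}\Big(\prod_{i=1}^r\sigma(\epsilon_i)^{|S^{(i)}|}\Big)\,[\mathbf{F}]_{H,S,Q}.
\]

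Finally I would average over the $2^r$ children, i.e.\ sum this identity over $\epsilon\in\{L,R\}^r$ and divide by $2^r$. The coefficient of $[\mathbf{F}]_{H,S,Q}$ then factorizes as $\prod_{i=1}^r\big(\tfrac12\sum_{\epsilon_i\in\{L,R\}}\sigma(\epsilon_i)^{|S^{(i)}|}\big)=\prod_{i=1}^r\tfrac12\big(1+(-1)^{|S^{(i)}|}\big)$, which equals $1$ when every $|S^{(i)}|$ is even and $0$ otherwise. Hence $\sum_{Q'\in\mathcal{C}(Q)}2^{-r}\mathcal{B}_{H,Q'}$ is exactly the sum of $[\mathbf{F}]_{H,S,Q}$ over all $S$ with all $|S^{(i)}|$ even; the single term with $S^{(i)}=\emptyset$ for every $i$ equals $[\mathbf{F}]_{H,(\emptyset),Q}=\mathcal{B}_{H,Q}$, so subtracting $\mathcal{B}_{H,Q}$ leaves precisely the sum over $S$ with all cardinalities even and at least one $S^{(i_0)}$ nonempty (equivalently $|S^{(i_0)}|\geq 2$), which is the asserted formula.

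I do not expect a real obstacle here: the statement is a bookkeeping identity. The only points needing care are the one-dimensional Haar relations together with the sign convention $\sigma$, and the observation that after the substitution every Haar factor lives on the parent cube $Q$, so that each summand is legitimately of the form $[\mathbf{F}]_{H,S,Q}$; the vanishing of all odd subsets is then automatic from $\tfrac12(1+(-1)^m)=\mathbbm{1}_{\{m\text{ even}\}}$.
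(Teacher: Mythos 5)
Your proposal is correct and is essentially the same argument as in the paper: the paper's identities $\mathbbm{h}^0_{I_L}=(1+\nu_I)\mathbbm{h}^0_I$, $\mathbbm{h}^0_{I_R}=(1-\nu_I)\mathbbm{h}^0_I$, $\mathbbm{h}^1_I=\nu_I\mathbbm{h}^0_I$ are exactly your $\mathbbm{h}^0_{I^{\epsilon}}=\mathbbm{h}^0_I+\sigma(\epsilon)\mathbbm{h}^1_I$, and summing over the signs $\alpha_i=\pm1$ in the paper is the same as averaging over the children $\epsilon\in\{L,R\}^r$ in your write-up, with $\tfrac12\big(1+(-1)^{|S^{(i)}|}\big)$ doing the parity selection. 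The only cosmetic difference is the order of operations (you expand each child and then average; the paper writes the sign-identity first and then matches both sides), which has no mathematical content.
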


\begin{proof} The proof of this result is similar to the proof of Theorem $2.1$ from \cite{K11}, where one can look for a more detailed proof. The idea is to start with the identity
$$\prod_{i=1}^r \Big( \prod_{j=1}^{n_i} \big( 1+\alpha_i \nu_{I_i} \big( x_{v^{(i)}_j} \big) \big) \Big) = \sum_{\substack{S^{(1)} \subseteq V^{(1)} \\ \dots \\ S^{(r)} \subseteq V^{(r)}}} \prod_{i=1}^r \prod_{v^{(i)}_j \in S^{(i)}} \alpha_i^{\left| S_i \right|} \nu_{I_i} \big( x_{v^{(i)}_j} \big)$$
for dyadic intervals $I_1, \dots, I_r$ of same length and with $\nu_I := \mathbbm{1}_{I_L}-\mathbbm{1}_{I_R}$ for each $I \in \mathcal{C}_1$. If we sum these equations based on the choice of $\alpha_i = \pm 1, 1 \leq i \leq r$, multiply the new equation with $\frac{\prod_{e \in E} F_{l_E(e)} (\mathbbm{x}_e)}{2^r \left|I_1 \right|^{n_1} \dots  \left|I_r \right|^{n_r}}$, integrate over all the appearing variables and use the identities
$$\mathbbm{h}^1_I = \nu_I \mathbbm{h}^0_I,\,\,\,\,\,\, \mathbbm{h}^0_{I_L} = \left( 1 + \nu_I \right) \mathbbm{h}^0_I,\,\,\,\,\,\, \mathbbm{h}^0_{I_R} = \left( 1 - \nu_I \right) \mathbbm{h}^0_I,$$
we get the equation that can also be written as
$$\frac{1}{2^r} \sum_{Q' \in \mathcal{C}(Q)}\left[ \mathbf{F} \right]_{H,(\emptyset),Q'} = \!\!\!\!\!\!\!\!\!\!\sum_{\substack{S^{(1)} \subseteq V^{(1)}, \left| S^{(1)} \right| \, \mathrm{even} \\ \dots \\ S^{(r)} \subseteq V^{(r)}, \left| S^{(r)} \right| \, \mathrm{even}}}\!\!\!\!\!\!\!\!\!\! \left[ \mathbf{F} \right]_{H,S,Q}.$$
By substracting $\left[ \mathbf{F} \right]_{H,(\emptyset),Q}$ from both sides of the equality we get the desired result.
\end{proof}

Another useful result is the following lemma from \cite{K11}, stated in a general notation instead of a notation using hypergraphs and selected vertices.

\begin{lemma} \label{neneg} For any $m \in \mathbb{N}$, any $I_1, \dots, I_m \in \mathcal{C}_1$ and any nonnegative function $f : \mathbb{R}^m \rightarrow \mathbb{R}$, the expression
$$\sum_{S \subseteq \left\{ 1, \dots, m \right\}, \left| S \right| \, \mathrm{even}} \int_{\mathbb{R}^n} f(x_1, \dots, x_m) \bigg( \prod_{i \in S} \mathbbm{h}_{I_i}^1(x_i) \prod_{i \in S^c} \mathbbm{h}_{I_i}^0(x_i) \bigg) d(x_1,\dots,x_n)$$
is also nonnegative.
\end{lemma}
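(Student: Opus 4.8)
The plan is to rewrite the sum as an integral of a manifestly nonnegative integrand. First I would record the elementary identity that for any real numbers $a_1,\dots,a_m,b_1,\dots,b_m$ one has
\[
\sum_{\substack{S \subseteq \{1,\dots,m\} \\ |S|\ \mathrm{even}}} \prod_{i \in S} a_i \prod_{i \in S^c} b_i \;=\; \frac{1}{2}\Big( \prod_{i=1}^m (b_i + a_i) \;+\; \prod_{i=1}^m (b_i - a_i) \Big),
\]
which follows by expanding both products on the right-hand side and observing that every term containing an odd number of the $a_i$'s cancels while every term containing an even number appears twice. This is exactly the $\alpha_i = \pm 1$ averaging trick already used in the proof of Proposition \ref{difference}, specialized to the even part.

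I would then apply this identity pointwise, with $a_i = \mathbbm{h}^1_{I_i}(x_i)$ and $b_i = \mathbbm{h}^0_{I_i}(x_i)$, so that the function appearing under the integral sign (after interchanging the finite sum over $S$ with the integral, which is harmless since $f$ is compactly supported and the Haar functions are bounded) becomes
\[
f(x_1,\dots,x_m)\cdot\frac{1}{2}\Big( \prod_{i=1}^m \big(\mathbbm{h}^0_{I_i}(x_i)+\mathbbm{h}^1_{I_i}(x_i)\big) \;+\; \prod_{i=1}^m \big(\mathbbm{h}^0_{I_i}(x_i)-\mathbbm{h}^1_{I_i}(x_i)\big) \Big).
\]
The key observation is that each factor is nonnegative: from $\mathbbm{h}^0_I = |I|^{-1}\mathbbm{1}_I$ and $\mathbbm{h}^1_I = |I|^{-1}(\mathbbm{1}_{I_L}-\mathbbm{1}_{I_R})$ we get $\mathbbm{h}^0_I + \mathbbm{h}^1_I = 2|I|^{-1}\mathbbm{1}_{I_L} \geq 0$ and $\mathbbm{h}^0_I - \mathbbm{h}^1_I = 2|I|^{-1}\mathbbm{1}_{I_R} \geq 0$. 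Hence both products are products of nonnegative functions, their sum is a nonnegative function, and multiplying by the nonnegative $f$ yields a nonnegative integrand; integrating preserves nonnegativity, which is precisely the claim.

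There is essentially no serious obstacle in this argument; the only points requiring care are the bookkeeping in the parity identity and the explicit sign computation $\mathbbm{h}^0_I \pm \mathbbm{h}^1_I \ge 0$. One could alternatively phrase the proof via a change of variables that collapses each interval onto its two halves, but the identity above is the cleanest route and dovetails with the computation in Proposition \ref{difference}.
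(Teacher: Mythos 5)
Your proof is correct. The paper itself cites this lemma from~\cite{K11} without reproducing an argument, but the $\alpha_i=\pm 1$ averaging identity you invoke is exactly the device the paper already uses in the proof of Proposition~\ref{difference}, and your specialization of it to the even-cardinality part, together with the sign computation $\mathbbm{h}^0_I \pm \mathbbm{h}^1_I = 2|I|^{-1}\mathbbm{1}_{I_L}$ or $2|I|^{-1}\mathbbm{1}_{I_R}$, is the intended (and standard) proof. One tiny remark: you do not need $f$ to be compactly supported to swap the finite sum with the integral --- linearity suffices once each integral in the statement is assumed to exist, which is implicit.
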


The next lemma is a straightforward repeated application of H\"{o}lder's inequality. The reader can compare it with the particular case $r=2$ appearing in \cite{K11}. 

\begin{lemma} \label{holder} Let $H=(V,E)$ be a complete $r$-partite $r$-uniform labeled hypergraph. If $N := \prod_{i=1}^r n_i$ and $M := \max\left\{ \frac{N}{n_1}, \dots, \frac{N}{n_r} \right\} = \max_{1 \leq i \leq r} \prod_{\substack{1 \leq j \leq r \\ j \neq i}} n_j$, then for any tuple $\mathbf{F}=(F_l)_{l\in L_E}$ of nonnegative measurable functions we have\vspace*{-0.1cm}
$$\big[ (F_{l_E(e)})_{e \in E} \big]_{H,(\emptyset), Q} \leq \prod_{e \in E} \big[ F_{l_E(e)}^M \big]_Q^{\frac{1}{M}}.$$\vspace*{-0.5cm}
\end{lemma}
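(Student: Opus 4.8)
The plan is to prove Lemma \ref{holder} by a direct, iterated application of Hölder's inequality, treating the variables of one vertex-partition class at a time. Recall that
$$
\big[ (F_{l_E(e)})_{e \in E} \big]_{H,(\emptyset), Q} = \frac{1}{|Q|}\int_{\mathbb{R}^n} \prod_{e \in E} F_{l_E(e)}(\mathbbm{x}_e)\, \prod_{i=1}^r \prod_{j=1}^{n_i} \mathbbm{h}^0_{I_i}\big(x_{v^{(i)}_j}\big)\, d\mathbbm{x},
$$
and, since $\mathbbm{h}^0_{I_i} = |I_i|^{-1}\mathbbm{1}_{I_i}$, this is exactly the normalized integral $\frac{1}{|Q|}\int_Q \prod_{e \in E} F_{l_E(e)}(\mathbbm{x}_e)\,d\mathbbm{x}$ where now $\mathbbm{x}$ ranges over $Q^{(?)}$—more precisely each variable $x_{v^{(i)}_j}$ ranges over $I_i$, so the domain is a product of $n$ intervals whose total measure is $|I_1|^{n_1}\cdots|I_r|^{n_r}$; after normalization we may rescale and simply assume $|I_i|=1$ for all $i$, i.e.\ work with averages over $[0,1)^n$. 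Since $H$ is complete, $E = \prod_{i=1}^r V^{(i)}$ and each $F_{l_E(e)}$ depends on exactly the $r$ variables $x_{v^{(1)}}, \dots, x_{v^{(r)}}$ corresponding to the $r$ vertices of $e$.

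First I would fix attention on the variables coming from the first partition class, $x_{v^{(1)}_1}, \dots, x_{v^{(1)}_{n_1}}$. The product $\prod_{e\in E} F_{l_E(e)}$ factors as $\prod_{j=1}^{n_1} G_j$, where $G_j$ collects exactly those edges whose first vertex is $v^{(1)}_j$; each such $G_j$ depends on $x_{v^{(1)}_j}$ and on all the remaining variables from classes $2,\dots,r$, and there are $N/n_1$ edges in $G_j$. Applying Hölder's inequality in the $n_1$ variables $x_{v^{(1)}_1},\dots,x_{v^{(1)}_{n_1}}$ with exponents all equal to $n_1$ (each $G_j$ depends on only one of these variables, so the "Hölder" step is really a Fubini-plus-product-of-integrals estimate), and then bounding $M$-th roots appropriately, one peels off this class and is left with a product of averages of the form $[G_j^{n_1}]^{1/n_1}$, each of which is again a paraproduct-type average over a complete hypergraph with one fewer class (the residual variables) but with functions raised to the power $n_1$. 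Iterating this over the classes $i=2,\dots,r$, at stage $i$ one raises to a power $n_i$ and divides out, and after all $r$ stages each $F_{l_E(e)}$ appears raised to the power $n_1 n_2 \cdots n_r = N$ under a plain integral over a single interval per variable; normalizing gives $[F_{l_E(e)}^N]_Q^{1/N}$ for each edge. Finally, since $M \le N$ and the functions are nonnegative, I would invoke the standard monotonicity of $L^p$-averages on a probability space, $[f^M]^{1/M} \le [f^N]^{1/N}$—wait, that is the wrong direction; instead the cleaner route is to do the Hölder steps so that the exponent attached to $F_{l_E(e)}$ never exceeds $M$.

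So let me restructure: at each stage $i$ I apply Hölder with exponents $n_i$ to separate the $n_i$ variables of class $i$, but I choose to keep track only of the worst case, i.e.\ I bound $\big[\prod_e F_{l_E(e)}\big]_{H,(\emptyset),Q}$ by a product over edges of $[F_{l_E(e)}^{e_i}]_Q^{1/e_i}$ where the exponent $e_i$ for edge $e$ equals the product $\prod_{j : \text{class } j \text{ already processed}} n_j$; choosing to process classes in the order that makes every edge's final exponent equal to $\prod_{j\ne i^*} n_j$ for the maximizing index $i^*$ is not literally possible uniformly, so instead I process \emph{all} classes, obtain the exponent $N$, and then I replace the use of Hölder at the \emph{last} class by just an $L^\infty$/nesting bound: on $[0,1)^n$ one has $[f^N]_Q^{1/N} \le \|f\|_{L^\infty}$ trivially, but that is too lossy. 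The honest fix is: at the final class $i=r$, instead of Hölder with equal exponents $n_r$ (which would push exponents to $N$), apply Hölder with exponents chosen so that exactly the edges already carrying the largest partial exponent get exponent $1$ there—but every edge carries the same partial exponent $n_1\cdots n_{r-1} = N/n_r$ after $r-1$ stages. Hence after $r-1$ stages each edge-function sits at power $N/n_r$, and for the last class one does \emph{not} apply Hölder at all: one simply integrates out $x_{v^{(r)}_1},\dots,x_{v^{(r)}_{n_r}}$ using that $[\,\cdot\,]$ over $[0,1)$ of a product bounded by $\prod_j (\text{something depending on } x_{v^{(r)}_j})$... this still needs Hölder. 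The correct statement is that one should process the classes in an order with the largest class \emph{last}: after processing classes $i\ne i^*$ (where $n_{i^*}$ is largest), each function carries power $\prod_{i\ne i^*} n_i = M$, and then for the final class $i^*$ one applies Hölder in its $n_{i^*}$ variables with exponent $n_{i^*}$, which would raise powers to $M\cdot n_{i^*}=N$; to avoid that, note each $G_j$ at that final stage depends on $x_{v^{(i^*)}_j}$ and residual variables, and we want $[G_j^{?}]$ — but all residual variables are already integrated. So at the final stage there are no residual variables: $G_j = G_j(x_{v^{(i^*)}_j})$ alone, the average over $[0,1)^{n_{i^*}}$ splits as $\prod_j [G_j]_{[0,1)}$ with no Hölder needed, and since each $G_j$ is a power-$M$ object this gives $\prod_e [F_{l_E(e)}^M]_Q^{1/M}$ exactly.

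The key steps in order: (1) reduce via $\mathbbm{h}^0_I = |I|^{-1}\mathbbm{1}_I$ to averages over a probability space and rescale to $|I_i|=1$; (2) reorder the partition classes so the largest one, say class $i^*$ with $n_{i^*} = \max_i n_i$, is processed last; (3) for each class $i \neq i^*$ in turn, factor the integrand over the $n_i$ independent variables of that class and apply Hölder with all exponents equal to $n_i$, thereby replacing the joint average by a product of averages in which every edge-function gets raised to the power (running product of the $n_i$'s processed so far); (4) after all classes except $i^*$ are processed, every edge-function carries exactly the power $M = \prod_{i\ne i^*} n_i$ and the only remaining variables are those of class $i^*$, on which the integrand now factors completely as a product over those $n_{i^*}$ variables, so Fubini alone (no further Hölder) yields the product $\prod_{e\in E}[F_{l_E(e)}^M]_Q^{1/M}$. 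The main obstacle, and the only place any care is genuinely required, is bookkeeping in step (3): one must verify that after factoring out class $i$, the residual pieces are still \emph{products, over the vertices of class $i$, of functions each depending on exactly one class-$i$ variable}, so that the finite-family Hölder inequality $\int \prod_{j=1}^{n_i} g_j \le \prod_{j=1}^{n_i}\big(\int g_j^{n_i}\big)^{1/n_i}$ applies cleanly; this is exactly where completeness of the hypergraph, $E=\prod_i V^{(i)}$, is used—it guarantees that the edges through a fixed vertex $v^{(i)}_j$ are in bijection with $\prod_{i'\ne i}V^{(i')}$, so the grouping is symmetric across $j$ and the exponents come out uniform. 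I will spell out one stage of this induction in detail and remark that the rest is identical, exactly as in the case $r=2$ of \cite{K11}.
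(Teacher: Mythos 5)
There is a genuine gap, and in fact the announced strategy would prove the wrong inequality. Two separate problems:

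\textbf{(i) The choice of $i^*$ is backwards.} You set $i^*$ to be the class with the \emph{largest} $n_{i^*}$ and claim $\prod_{i\neq i^*}n_i = M$. But $\prod_{i\neq i^*}n_i = N/n_{i^*}$, whereas $M = \max_i (N/n_i) = N/\min_i n_i$; these agree only when $n_{i^*}=\min_i n_i$, i.e.\ when $i^*$ is the \emph{smallest} class. With your choice, $\prod_{i\neq i^*}n_i = N/\max_i n_i = \min_i(N/n_i) \le M$, and the resulting inequality $[\mathbf F]_{H,(\emptyset),Q}\le \prod_e[F_e^{N/n_{\max}}]_Q^{n_{\max}/N}$ is false in general. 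For instance, with $r=2$, $n_1=1$, $n_2=2$ you would be claiming $\int_0^1 g_1 g_2 \le \big(\int_0^1 g_1\big)\big(\int_0^1 g_2\big)$ for $g_i(x)=\int_0^1 F_{1i}(x,y)\,dy$, which fails whenever $g_1=g_2$ is nonconstant.

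\textbf{(ii) The ``Fubini alone at the last stage'' step is not available.} Even with $i^*$ chosen as the smallest class, after you have processed the classes $i\neq i^*$ (with Jensen pushed inside to turn $n_i$-th powers of averages into averages of $n_i$-th powers), the $(j_1,\dots,j_{r-1})$-indexed blocks at the last stage have the shape $\Big(\displaystyle\int \prod_{j_{i^*}} F_{(j_1,\dots,j_{i^*},\dots)}^{M}\,d\mathbbm{x}'\Big)^{1/M}$, where $d\mathbbm{x}'$ runs over $x^{(i^*)}_1,\dots,x^{(i^*)}_{n_{i^*}}$ \emph{together with} the already-processed variables $x^{(i)}_{j_i}$, $i\neq i^*$. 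The $n_{i^*}$ factors inside share all of those already-processed variables, so Fubini does not factor the integral; a further H\"older step is forced, and with the per-class exponent $n_{i^*}$ it pushes the power on each $F_e$ up to $Mn_{i^*}=N$, not $M$. (Your $r=2$ intuition, where the two blocks at the final stage genuinely depend on \emph{different} remaining variables $x^{(2)}_1$ and $x^{(2)}_2$, is special to $r=2$ and breaks for $r\ge 3$.)

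The clean way to see this ``repeated H\"older'' is to use the single uniform exponent $M$ in \emph{every} variable, never the per-class exponents $n_i$. After normalizing $|I_i|=1$, integrate out one coordinate $x^{(i)}_j$ at a time. The edges through $v^{(i)}_j$ number $N/n_i$, and $N/n_i\le M$ by the very definition of $M$; hence H\"older with exponent $M$ in $x^{(i)}_j$ (with the slack factor $1-(N/n_i)/M\ge 0$ absorbed by $\int_{I_i}\mathbbm{1}=|I_i|$) gives
\[
\int_{I_i}\prod_{e\ni v^{(i)}_j}F_e\,\frac{dx^{(i)}_j}{|I_i|}\ \le\ \prod_{e\ni v^{(i)}_j}\Big(\int_{I_i}F_e^{M}\,\frac{dx^{(i)}_j}{|I_i|}\Big)^{1/M},
\]
replacing each affected $F_e$ by a function of one fewer variable. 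Iterating over all $n$ vertices in any order (the coverage condition $\sum_{e\ni v}1/M\le 1$ is preserved at every step) lands exactly on $\prod_{e\in E}[F_e^M]_Q^{1/M}$. The observation that ``each vertex is shared by $N/n_i\le M$ edges'' is precisely where completeness of the hypergraph and the definition of $M$ enter; per-class exponents $n_i$ do not see this and accumulate to the weaker power $N$.
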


Let $H=(V,E)$ be a labeled hypergraph with the label functions $l_V$ and $l_E$ and the set of vertex labels marked as $L_V = \cup_{i=1}^r L^{(i)}_V$ such that, for each $i \in \left\{ 1, \dots, r \right\}$, $l_V \left( V^{(i)} \right) \subseteq L^{(i)}_V$. With a slight deviation from the previous notation, this time we will write $L^{(i)}_V = \{ x^{(i)}_{j,k} : j,k \in \mathbb{N} \}$. In the following proofs we will square certain parts of paraproduct-type terms, making certain variables appear more than once. To keep the practical notation of the evaluation of the expression at certain graph, we are expanding the vertex label sets with ``copies", i.e.\@ as certain variable $x^{(i)}_j$ can appear more than once (but at most $n_i = |V^{(i)}|$ times), so we will mark its copies with $x^{(i)}_{j,k}, k \in \mathbb{N}$. It will also be practical to denote $L^{(i)}_j := \{ x^{(i)}_{j,k}: k \in \mathbb{N} \}$.

Let us introduce a requirement on $l_V$ and $l_E$ so that they produce ``properly'' labelled hypergraphs, i.e., those that appear in the following proof. Take any label variable $x_{i_k,j_k}^{(k)}$ for $k \in \left\{ 1, \dots, r \right\}, i_k, j_k \in \mathbb{N}$. As $l_V$ is an injective function, whenever $x_{i_k,j_k}^{(k)} \in \textnormal{Im}(l_V)$ we can define $v^{(k)}_{i_k,j_k} := l_V^{-1}(x_{i_k,j_k}^{(k)})$. For the set of edge labels we choose $L_E := \left\{ F_{i_1, \dots, i_r} : i_1, \dots, i_r \in \mathbb{N} \right\}$. With this notation, we will require the following condition to be satisfied:
\begin{equation}l_E( (v^{(1)}_{i_1,j_1}, \dots, v^{(r)}_{i_r,j_r}) ) = F_{i_1, \dots, i_r} \label{isti}\end{equation}
for each choice of indices $i_k$ and $j_k$, $k=1, \dots, r$; let us remember the agreement of the edges being the elements of the set $\prod_{i=1}^r V^{(i)}$, as mentioned in the Section \ref{prvaprava}. This means that any two edges with the same first lower indices of their vertices receive the same label from the set $L_E$. Otherwise, the two edges receive different labels. 

Additionally, we will restrict our attention to hypergraphs that are ``proper" in the sense that, when two variables $x_{v_1}$ and $x_{v_2}$ have the same first lower indices, then they belong to the same connected component and to the same partition class. Notice that this property is trivially satisfied in case of the complete hypergraph.

Without loss of generality, we will consider only those labeled hypergraphs for which the tuple $\big( \big| l_V^{-1} ( L^{(i)}_1) \big|, \big| l_V^{-1} (L^{(i)}_2)\big|, \dots, \big| l_V^{-1} ( L^{(i)}_{m_i} ) \big| \big)$ is decreasing for each $i \in \left\{ 1, \dots, r \right\}$; otherwise we could interchange the roles of the vertex labels (along with their copies) in a way that this becomes the decreasing tuple. That way we would operate with labeled hypergraphs with same set of vertices $V$ and same set of vertex labels $L_V$, but with a different label function $l_V$. A family of such hypergraphs on the set of vertices $V = \cup_{i=1}^r V^{(i)}$ will be denoted by $\mathcal{H}_{(n_i)}$. We define
$$\mathcal{S} := \big\{ S = \big( S^{(i)} \big)_{1 \leq i \leq r} : S \neq \left( \emptyset \right) \,\, \textnormal{and, for all} \,\, i \in \left\{ 1, 2, \dots, r \right\}, \,\, S^{(i)} \subseteq V^{(i)} \,\,\mathrm{and}\,\, | S^{(i)} | \,\,\textrm{is even} \big\}$$
Also, we define a binary relation $\preceq$ for hypergraphs $H, H' \in \mathcal{H}_{(n_i)}$ in the following way.
\begin{align*}
H \preceq H' \Longleftrightarrow& \big( \big| l_V^{-1} ( L^{(1)}_1) \big|, \dots, \big| l_V^{-1} ( L^{(1)}_{m_1} ) \big|; \dots;  \big| l_V^{-1} ( L^{(r)}_1) \big|, \dots, \big| l_V^{-1} ( L^{(r)}_{m_r} ) \big| \big) \\
&\geq \big( \big| l_V^{'-1} ( L^{(1)}_1) \big|, \dots, \big| l_V^{'-1} ( L^{(1)}_{m_1} ) \big|; \dots;  \big| l_V^{'-1} ( L^{(r)}_1) \big|, \dots, \big| l_V^{'-1} ( L^{(r)}_{m_r} ) \big| \big),
\end{align*}
where we consider the latter relation on $m$-tuples to be a standard lexicographical order. We can notice that $(\mathcal{H}_{(n_i)}, \preceq)$ is a totally ordered finite set; therefore there exist minimal and maximal hypergraphs with respect to this relation.

The first case we are going to cover in our proofs is when the hypergraph we are working with is complete. Consequently, the numbers $d_e$ are the same for all edges $e$ and we write them simply as $d$. Moreover, we fix a finite convex tree $\mathcal{T}$. Any constants in the inequalities will be independent of the choice of that tree.

Finally, let us also, for a moment, assume that all functions constituting $\mathbf{F}$ are normalized so that
\[ \max_{Q\in\mathcal{T}\cup\mathcal{L}(\mathcal{T})} [F_l^d]^{1/d}=1. \]
for each $l\in L_E$. Later we will use homogeneity to remove this normalizing condition.

\begin{lemma} \label{bound} For every complete $r$-partite $r$-regular hypergraph $H \in \mathcal{H}_{(n_i)}$ there exists an averaging paraproduct-type term $\mathcal{B}_H = \mathcal{B}_{H,Q}$ satisfying
$$\max_{Q \in \mathcal{T} \cup \mathcal{L}(\mathcal{T})} \mathcal{B}_{H,Q} \lesssim_{\left( n_i \right)} 1$$
and such that for every $\delta \in \left< 0, 1 \right>$ and for every $Q \in \mathcal{C}_r$ the following inequality holds for some $C_{(n_i)}>0$:
$$| \left[ \mathbf{F} \right]_{H,S,Q} | \leq \square \mathcal{B}_{H,Q} + C_{(n_i)} \delta^{-1} \!\!\!\!\!\!\!\!\!\!\sum_{\substack{H' \in \mathcal{H}_{(n_i)}, H' \prec H \\ R \in \mathcal{S}}}\!\!\!\!\!\!\!\!\!\! | \left[ \mathbf{F} \right]_{H',R,Q} | + C_{(n_i)} \delta \!\!\!\!\!\!\!\!\!\!\sum_{\substack{H' \in \mathcal{H}_{(n_i)}, H' \succeq H \\ R \in \mathcal{S}}}\!\!\!\!\!\!\!\!\!\! | \left[ \mathbf{F} \right]_{H',R,Q} |.$$
\end{lemma}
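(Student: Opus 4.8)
The plan is to take $\mathcal{B}_H=\mathcal{B}_{H,Q}:=[\mathbf{F}]_{H,(\emptyset),Q}$, the averaging paraproduct-type term attached to $H$ itself. The bound $\max_{Q\in\mathcal{T}\cup\mathcal{L}(\mathcal{T})}\mathcal{B}_{H,Q}\lesssim_{(n_i)}1$ is then immediate: since $H$ is complete, $d_e=d$ for every edge and $d$ is exactly the constant $M=\max_{i}\prod_{j\ne i}n_j$ appearing in Lemma~\ref{holder}, so $0\le\mathcal{B}_{H,Q}\le\prod_{e\in E}[F_{l_E(e)}^{d}]_Q^{1/d}\le1$ for each $Q\in\mathcal{T}\cup\mathcal{L}(\mathcal{T})$ by the normalization imposed on $\mathbf{F}$. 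It then remains to prove the recursive inequality for each fixed $S\in\mathcal{S}$, $Q\in\mathcal{C}_r$ and $\delta\in\langle 0,1\rangle$.

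Fix $i_0$ with $S^{(i_0)}\ne\emptyset$ (so $|S^{(i_0)}|\ge 2$). Integrating first over the variables indexed by $V^{(i_0)}$, write $G_w:=\prod_{e\ni w}F_{l_E(e)}$ and $g_w:=\int G_w(x_w,\cdot)\,\mathbbm{h}^{\epsilon_w}_{I_{i_0}}(x_w)\,dx_w$, where $\epsilon_w=1$ for $w\in S^{(i_0)}$ and $\epsilon_w=0$ otherwise (so $g_w\ge 0$ when $w\notin S^{(i_0)}$). Splitting $S^{(i_0)}$ into two equal halves $S^{(i_0)}=S_A\sqcup S_B$, using $|\mathbbm{h}^1_{I_i}|=\mathbbm{h}^0_{I_i}$ to pass to a probability measure in the Haar factors with $i\ne i_0$, and applying Cauchy--Schwarz, one obtains
\[ |[\mathbf{F}]_{H,S,Q}|\le[\mathbf{F}]_{H_A,S_A',Q}^{1/2}\,[\mathbf{F}]_{H_B,S_B',Q}^{1/2}, \]
where $H_A$ (respectively $H_B$) is the element of $\mathcal{H}_{(n_i)}$ obtained from $H$ by deleting the $S_B$-vertices (respectively $S_A$-vertices) of $V^{(i_0)}$ and doubling the $S_A$-vertices (respectively $S_B$-vertices), each new copy inheriting its edge labels through~\eqref{isti}, and where $S_A',S_B'\in\mathcal{S}$ are supported only in the block $i_0$. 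Both evaluations on the right are nonnegative, being integrals of squares.

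Now one must locate $H_A,H_B$ relative to $H$ in $(\mathcal{H}_{(n_i)},\preceq)$. Doubling a vertex enlarges one family of $V^{(i_0)}$; choosing $S_A$ among the largest families and $S_B$ among the smallest makes this enlargement create a new maximal family size, which places at least one of $H_A,H_B$ strictly below $H$, unless $S$ is already compatible with the clustering of $H$, i.e. carried entirely by families $L_j^{(i)}$ that contain more than one vertex (and likewise for the complementary selections inside each class). Accordingly: (i) if the split can be chosen with both $H_A,H_B\prec H$, the arithmetic--geometric mean inequality gives $|[\mathbf{F}]_{H,S,Q}|\le C_{(n_i)}\sum_{H'\prec H,\,R\in\mathcal{S}}|[\mathbf{F}]_{H',R,Q}|$; (ii) if only $H_A\prec H$ while $H_B\succeq H$, the weighted inequality $xy\le\tfrac12\delta^{-1}x^2+\tfrac12\delta y^2$ assigns coefficient $C_{(n_i)}\delta^{-1}$ to the $\prec H$ term and $C_{(n_i)}\delta$ to the $\succeq H$ term; (iii) in the remaining ``clustered'' case the doubled edges over each relevant family carry the same label by~\eqref{isti}, so the $g_w$ coincide over the doubled vertices and $[\mathbf{F}]_{H,S,Q}$ is itself a nonnegative integral of a square --- here one invokes Proposition~\ref{difference} to write $[\mathbf{F}]_{H,S,Q}=\square\mathcal{B}_{H,Q}-\sum_{R\in\mathcal{S},\,R\ne S}[\mathbf{F}]_{H,R,Q}$, discards the nonnegative clustered terms on the right, bounds the remaining reducible ones by cases (i)--(ii), and uses Lemma~\ref{neneg} to control the sign of the relevant partial sums over even selections. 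In every case this produces exactly the asserted inequality after absorbing the constants.

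The main obstacle is the combinatorial bookkeeping of the Cauchy--Schwarz step: proving that $i_0$ and the split $S^{(i_0)}=S_A\sqcup S_B$ can always be chosen so that at least one of $H_A,H_B$ strictly decreases the $\preceq$-rank whenever $S$ is not compatible with the family clustering of $H$, and verifying that in the opposite case $[\mathbf{F}]_{H,S,Q}$ is genuinely a nonnegative square that is dominated by $\square\mathcal{B}_{H,Q}$ up to admissible errors. One must also carry this out with constants uniform in $Q$ (hence independent of the tree $\mathcal{T}$) and make sure that every paraproduct-type term with empty selection that could arise is accounted for only through the $\square\mathcal{B}_{H,Q}$ contribution.
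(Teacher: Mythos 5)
Your overall approach --- Cauchy--Schwarz to move down the $\preceq$-rank, then Proposition~\ref{difference} together with Lemma~\ref{neneg} in the ``clustered'' case --- is structurally the same as the paper's. There is, however, a concrete gap coming from the decision to take $\mathcal{B}_{H,Q}=[\mathbf{F}]_{H,(\emptyset),Q}$ in \emph{all} cases. In your cases (i) and (ii) the Cauchy--Schwarz step yields $|[\mathbf{F}]_{H,S,Q}|\le C\delta^{-1}\sum_{\prec}+C\delta\sum_{\succeq}$ with no $\square\mathcal{B}_{H,Q}$ term, whereas the asserted inequality has $\square\mathcal{B}_{H,Q}$ added on the right. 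That quantity can be strictly negative --- take $r=2$, $n_1=n_2=2$, $F_{u_1w_1}=F_{u_1w_2}=\mathbbm{1}_{I_L\times J}$, $F_{u_2w_1}=F_{u_2w_2}=\mathbbm{1}_{I_R\times J}$, and compute $\square\mathcal{B}_{H,Q}=-\frac14<0$. Writing $\square\mathcal{B}_{H,Q}=\sum_{R\in\mathcal{S}}[\mathbf{F}]_{H,R,Q}$ these terms live at $H$ itself (hence $\succeq H$), and the negative part would have to be absorbed into the $\delta$-weighted $\succeq H$ sum, which is impossible uniformly in $\delta\to0$ unless one first passes through the $\mathcal{S}^{(3)}$-decomposition and the Cauchy--Schwarz bounds to re-express it with a $\delta^{-1}$ factor. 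Nothing in your write-up carries out that absorption; the phrase ``after absorbing the constants'' does not cover a sign issue. The paper avoids this altogether by simply taking $\mathcal{B}_{H,Q}\equiv 0$ in the non-clustered case, and your proof should do likewise (the existence statement in the lemma allows the choice of $\mathcal{B}_H$ to depend on the case).

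Beyond that, two steps are asserted rather than proved. First, the combinatorial claim that the halving $S^{(i_0)}=S_A\sqcup S_B$ can always be chosen so that at least one of $H_A,H_B$ is strictly $\prec H$ is precisely the content the paper gets from picking two individual vertices $v_1,v_2$ lying in \emph{different} families $l_V^{-1}(L^{(k)}_i)\neq l_V^{-1}(L^{(k)}_j)$; your ``largest vs.~smallest families'' heuristic does not obviously pin down the lexicographic comparison, and you yourself flag this as the ``main obstacle.'' Second, your clustered-case criterion (``carried entirely by families that contain more than one vertex'') is not the right condition; what matters is the paper's: for each $k$, $S^{(k)}$ meets at most one family. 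Moreover the statement that $[\mathbf{F}]_{H,S,Q}$ ``is itself a nonnegative integral of a square'' is false for general $S$ with several nonempty blocks; one must first reduce, as the paper does, to $S'=(\{v_1,v_2\},\emptyset,\dots,\emptyset)$ via $|[\mathbf{F}]_{H,S,Q}|\le[\mathbf{F}]_{H,S',Q}$ before the positivity claim holds. With these corrections your proof would coincide with the paper's; as written it leaves essential sign and bookkeeping issues unresolved.
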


\begin{proof} We will first cover the case when we have $k \in \left\{ 1, \dots, r \right\}$ and distinct $i, j \in \left\{ 1, \dots, n_k \right\}$ such that $l_V^{-1} ( L^{(k)}_i ) \cap S^{(k)} \neq \emptyset$ and $l_V^{-1} ( L^{(k)}_j ) \cap S^{(k)} \neq \emptyset$; without loss of generality, let $k=1$, $i=1$, $j=2$. Let $v_1 \in l_V^{-1} ( L^{(1)}_1 ) \cap S^{(1)}$ and $v_2 \in l_V^{-1} ( L^{(1)}_2 ) \cap S^{(1)}$. By separating products of functions depending on whether the edge $e \in E$ contains vertex $v_1$, vertex $v_2$ or none of them and then applying the inequality $|AB| \leq \frac{1}{2\delta}A^2 + \frac{\delta}{2}B^2 \leq \delta^{-1}A^2 + \delta B^2$ for any $A,B \in \mathbb{R}$, we conclude that
$$| \left[ \mathbf{F} \right]_{H,S,Q} | \leq \delta^{-1} \left[ \mathbf{F} \right]_{H',R,Q} + \delta \left[ \mathbf{F} \right]_{H'',R,Q}$$
for labeled hypergraphs $H',H''$ and a tuple of subsets $R$ defined in the following way. Starting with hypergraph $H$, let the label $x_{v_2}$ be a copy of the label $x_{v_1}$, i.e.\@ redefine $l_V(v_2)$ in a way that $l_V(v_2) \in L^{(1)}_1 \backslash l_V(V^{(1)})$ (so that $l_V$ remains an injective function). Also, remove all edges $e \in E$ for which $v_2 \in E$ and add edges $e' \in E$ which have $v_1 \in e'$, but with vertex $v_2$ instead of $v_1$. In analogous way we define labeled hypergraph $H''$. Intuitively, starting from the hypergraph $H$ we removed one of vertices $v_1$ and $v_2$ and then we doubled the remaining vertex and its role. As for the sequence of subsets $R$, we take
$$R^{(1)} = \left\{ v_1, v_2 \right\}, \,\, R^{(k)} = \emptyset, k \geq 2.$$
Notice that $H' \prec H$ as the first different component from the definition of the relation $\prec$ got increased while constructing $H'$. On the other hand, it might happen that the tuple representing the number of times each vertex label appears for the hypergraph $H''$ did not decrease. In that case we will interchange the roles of the vertex labels according to our agreement before this lemma. With that agreement, it may still happen that $H'' \succeq H$ as well as $H'' \prec H$, in which case we use the inequality $\delta < \frac{1}{\delta}$, which is true for any $\delta \in \left< 0,1 \right>$. The claim would then follow by adding the remaining terms $\delta^{-1} \left[ \mathbf{F} \right]_{H',R,Q}$ or $\delta \left[ \mathbf{F} \right]_{H',R,Q}$ to the whole expression. Note that $\mathcal{B}_{H,Q} \equiv 0$ satisfies the first inequality required in the statement of the lemma.

The second case of possible hypergraphs $H$ is when, for each $k \in \left\{ 1, \dots, r \right\}$, there exists at most one $i_k \in \left\{ 1, \dots, m_k \right\}$ such that $l_V^{-1} ( L^{(k)}_{i_k} ) \cap S^{(k)} \neq \emptyset$. Without loss of generality, let $l_V^{-1} ( L^{(1)}_1 ) \cap S^{(1)} \neq \emptyset$; in that case, there exist distinct $v_1,v_2 \in l_V^{-1} ( L^{(1)}_{i_1} ) \cap S^{(1)}$. If we mark $S^{'(1)} := \left\{ v_1, v_2 \right\}$ and $S^{'(k)} = \emptyset$ for $k \geq 2$, we can notice that
$$| \left[ \mathbf{F} \right]_{H,S,Q} | \leq \left[ \mathbf{F} \right]_{H,S',Q}$$
Since it is enough to bound the expression for $S'$, we will assume that $S$ is already defined as $S'$ above. Now, let $\mathcal{B}_{H,Q} := \left[ \mathbf{F} \right]_{H,(\emptyset),Q}.$ Note that the first inequality in the statement of this lemma is satisfied by Lemma \ref{holder} and the normalization of the functions. By Proposition \ref{difference},
$$\square \mathcal{B}_{H,Q} = \sum_{R \in \mathcal{S}} \left[ \mathbf{F} \right]_{H,R,Q}.$$

We will split the family $\mathcal{S}$ into three parts. For each $k \in \left\{ 1, \dots, r \right\}$ we define
\begin{align*}
\mathcal{S}^{(1)} := \big\{ S \in \mathcal{S} :& ( \exists! i_1 \in \mathbb{N} ) l_V^{-1} ( L^{(1)}_{i_1} ) \cap S^{(1)} \neq \emptyset \wedge \left( \forall k \in \left\{ 2, \dots, r \right\} \right) S^{(k)} = \emptyset \big\}, \\
\mathcal{S}^{(2)} := \big\{ S \in \mathcal{S} :& (\exists k \in \{ 2, \dots, r \}) (\exists! i_k \in \mathbb{N}) l_V^{-1} ( L^{(k)}_{i_k} ) \cap S^{(k)} \neq \emptyset \\
&\wedge \left( \forall k' \in \left\{ k+1, \dots, r \right\} \right) S^{(k')} = \emptyset \big\}, \\
\mathcal{S}^{(3)} := \big\{ S \in \mathcal{S} :& (\exists k \in \{ 1, \dots, r \}) (\exists i_k,i_k' \in \mathbb{N}, i_k \neq i_k')\,\, l_V^{-1} ( L^{(k)}_{i_k} ) \cap S^{(k)} \neq \emptyset \\
&\wedge l_V^{-1} ( L^{(k)}_{i_k'} ) \cap S^{(k)} \neq \emptyset \wedge (\forall k' \in \{ k+1, \dots, r \}) S^{(k')} = \emptyset \big\}.
\end{align*}
Notice that $\mathcal{S} = \dot\cup_{k=1}^3 \mathcal{S}^{(k)}$ and that $S \in \mathcal{S}^{(1)}$. Take $R \in \mathcal{S}^{(1)}$; as each of the functions $F_e, e\in E$ is nonnegative, the only possible integration of negative function on a set of positive measure happens each time when the function $\mathbbm{h}^1_I$ is involved, i.e.\@ whenever we include the edge which consists a selected vertex. The only selected vertices appear in the set $S^{(1)}$ and all of them have the label of the form $x_{i_1,j_1}^{(1)}$ for even number of indices $j_1 \in \{ 1, \dots, n_1 \}$. Notice that, no matter which of these variables we use to evaluate the integral expression, by the agreement in (\ref{isti}) and by the agreement of vertices having same first lower indices we can separate the product $\prod_{e \in E} F_e$ into equal products of the form $\prod_{v^{(1)}_{i_1,j_1} \in e \in E} F_e$. Therefore, by changing the order of the variables and separating the integral into more integrals, each of them having only one single variable of the form $x_{i_1,j_1}^{(1)}$, we get a product of same integral which appears an even amount of times. Having the same number to the power of the even natural number, we conclude that the whole expression is nonnegative. This works for any $R \in \mathcal{S}^{(1)}$, therefore
$$\sum_{R \in \mathcal{S}^{(1)}} \left[ \mathbf{F} \right]_{H,R,Q} \geq \left[ \mathbf{F} \right]_{H,S,Q}.$$

Now, let $k \in \left\{ 2, \dots, r \right\}$. For a moment, we will consider a labeled $(r-k+1)$-partite $(r-k+1)$-uniform hypergraph $H_k$ on $\prod_{i=k}^r V^{(i)}$ obtained from $H$ in a way that we keep all vertices from vertex components $V^{(k)}, \dots, V^{(r)}$ with same vertex labels and along with edges which are reduced by removing its vertices from disregarded vertex components $V^{(1)}, \dots, V^{(k-1)}$. Also, if $Q = \prod_{i=1}^r I_i$, define $Q_k := \prod_{i=k}^r I_i$. Along with $S^{'(i)} := \emptyset$ for $i \in \left\{ k+1, \dots, r \right\}$ and for fixed real numbers $\big( (x_{v_i^{(k')}})_{\substack{1 \leq k' \leq k-2 \\ 1 \leq i \leq n_{k'}}} \big)$, we define
$$f_{k-1} ( (x_{v_i^{(k-1)}})_{1 \leq i \leq n_{k-1}} ) := \!\!\!\!\!\!\!\!\!\!\!\!\!\!\!\sum_{\substack{S^{'(k)} \subseteq l_V^{-1} ( L^{(k)}_{i_k} ) \cap S^{(k)} \\ S^{'(k)} \neq \emptyset \,\,\textnormal{and}\,\, |S^{'(k)}| \,\,\textrm{is even}}}\!\!\!\!\!\!\!\!\!\!\!\!\!\!\! \left[ \mathbf{F} \right]_{H_k,(S^{'(i)})_{k \leq i \leq r},Q_k}.$$
The expression in the definition of this function is a sum of integral expressions containing the variables $x_{v_i^{(k')}}$ for each $k' \in \{ 1, \dots, r \}$ and $i \in \{ 1, \dots, n_{k'} \}$, integrating in each variable when $k' \geq k+1$. The function in though of as depending on the independent variables corresponding to $k' = k$ while the other variables for $k' \leq k-1$ are, at this moment, regarded as constants. Similarly as before, this function is nonnegative, so if we apply Lemma \ref{neneg} to function $f_{k-1}$, we can conclude that the function
$$f_{k-2} ( (x_{v_i^{(k-2)}})_{1 \leq i \leq n_{k-2}} ) := \sum_{\substack{S^{'(k-1)} \subseteq V^{(k-1)} \\ |S^{'(k-1)}| \,\,\textrm{is even}}} \sum_{\substack{S^{'(k)} \subseteq l_V^{-1} ( L^{(k)}_{i_k} ) \cap S^{(k)} \\ S^{'(k)} \neq \emptyset \,\,\textnormal{and}\,\, |S^{'(k)}| \,\,\textrm{is even}}}\!\!\!\!\!\!\!\!\!\! \left[ \mathbf{F} \right]_{H_{k-1},(S^{'(i)})_{k-1 \leq i \leq r},Q_{k-1}}$$
is also nonnegative, where $H_{k-1}$ is a $(r-k+2)$-partite $(r-k+2)$-uniform hypergraph on $\prod_{i={k-1}}^r V^{(i)}$ and $Q_{k_1} := \prod_{i=k-1}^r I_i$, defined analogously as $H_k$ and $Q_k$ before. Continuing to apply Lemma \ref{neneg} to each class of variables until we reach last function $f_2$, in variables $x_{v_1^{(1)}}, \dots, x_{v_{n_1}^{(1)}}$, we conclude that
$$\sum_{R \in \mathcal{S}^{(2)}} \left[ \mathbf{F} \right]_{H,R,Q} \geq 0.$$
The case of $R \in \mathcal{S}^{(3)}$ is covered as the first case of this proof, from which follows that
$$| \left[ \mathbf{F} \right]_{H,R,Q} | \leq \delta^{-1} \!\!\!\!\!\!\!\!\!\!\sum_{\substack{H' \in \mathcal{H}_{(n_i)}, H' \prec H \\ R' \in \mathcal{S}}}\!\!\!\!\!\!\!\!\!\! | \left[ \mathbf{F} \right]_{H',R',Q} | + \delta \!\!\!\!\!\!\!\!\!\!\sum_{\substack{H' \in \mathcal{H}_{(n_i)}, H' \succeq H \\ R' \in \mathcal{S}}}\!\!\!\!\!\!\!\!\!\! | \left[ \mathbf{F} \right]_{H',R',Q} |.$$
Combining all these cases, we can conclude that
{\allowdisplaybreaks
\begin{align*}
\left[ \mathbf{F} \right]_{H,S,Q} \leq& \sum_{R \in \mathcal{S}^{(1)}} \left[ \mathbf{F} \right]_{H,R,Q} + \sum_{R \in \mathcal{S}^{(2)}} \left[ \mathbf{F} \right]_{H,R,Q} + \sum_{R \in \mathcal{S}^{(3)}} \left[ \mathbf{F} \right]_{H,R,Q} - \sum_{R \in \mathcal{S}^{(3)}} \left[ \mathbf{F} \right]_{H,R,Q} \\
\leq& \square \mathcal{B}_{H,Q} + \sum_{R \in \mathcal{S}^{(3)}} \bigg( \delta^{-1} \!\!\!\!\!\!\!\!\sum_{\substack{H' \in \mathcal{H}_{(n_i)}, H' \prec H \\ R' \in \mathcal{S}}}\!\!\!\!\!\!\!\! | \left[ \mathbf{F} \right]_{H',R',Q} | + \delta \!\!\!\!\!\!\!\!\sum_{\substack{H' \in \mathcal{H}_{(n_i)}, H' \succeq H \\ R' \in \mathcal{S}}}\!\!\!\!\!\!\!\! | \left[ \mathbf{F} \right]_{H',R',Q} | \bigg) \\
=& \square \mathcal{B}_{H,Q} + C_{(n_i)} \delta^{-1} \!\!\!\!\!\!\!\!\sum_{\substack{H' \in \mathcal{H}_{(n_i)}, H' \prec H \\ R' \in \mathcal{S}}}\!\!\!\!\!\!\!\! | \left[ \mathbf{F} \right]_{H',R',Q} | + C_{(n_i)} \delta \!\!\!\!\!\!\!\!\sum_{\substack{H' \in \mathcal{H}_{(n_i)}, H' \succeq H \\ R' \in \mathcal{S}}}\!\!\!\!\!\!\!\! | \left[ \mathbf{F} \right]_{H',R',Q} |
\end{align*}
}
for $C_{(n_i),\delta} := |\mathcal{S}^{(3)}|$, which is the claim of this lemma.
\end{proof}

\begin{lemma} For every $r$-partite $r$-regular complete hypergraph $H$ and for every $\epsilon \in \left< 0,1 \right>$ there exist an averaging paraproduct-type expression $\mathcal{B}_{H,Q}^{\epsilon}$ satisfying
$$\max_{Q \in \mathcal{T} \cup \mathcal{L}(\mathcal{T})} \mathcal{B}_{H,Q}^{\epsilon} \lesssim_{\left( n_i \right), \epsilon} 1$$
and
$$\sum_{\substack{H' \in \mathcal{H}_{(n_i)}, H' \preceq H \\ S \in \mathcal{S}}}\!\!\!\!\!\!\!\!\!\! | \left[ \mathbf{F} \right]_{H',S,Q} | \leq \square \mathcal{B}_{H,Q}^{\epsilon} + \epsilon \!\!\!\!\!\!\!\!\!\!\sum_{\substack{H' \in \mathcal{H}_{(n_i)}, H' \succ H \\ S \in \mathcal{S}}}\!\!\!\!\!\!\!\!\!\! | \left[ \mathbf{F} \right]_{H',S,Q} |.$$
\end{lemma}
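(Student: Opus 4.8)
The plan is to obtain the desired inequality by summing the conclusion of Lemma~\ref{bound} over all hypergraphs $H'\preceq H$ in a controlled way, using that $(\mathcal{H}_{(n_i)},\preceq)$ is a totally ordered finite set. Write $\mathcal{H}_{(n_i)}=\{H_1\prec H_2\prec\dots\prec H_N\}$ and suppose $H=H_m$, so that we must control $\sum_{j\le m}\sum_{S\in\mathcal{S}}|[\mathbf{F}]_{H_j,S,Q}|$. The strategy is a finite downward induction on $m$: for the minimal hypergraph $H_1$ the set $\{H'\prec H_1\}$ is empty, and Lemma~\ref{bound} (applied with each $S\in\mathcal{S}$ and a parameter $\delta_1$ to be chosen) already gives $\sum_{S}|[\mathbf{F}]_{H_1,S,Q}|\le \square\mathcal{B}_{H_1,Q}^{\delta_1}+C\delta_1\sum_{H'\succeq H_1,\,R}|[\mathbf{F}]_{H',R,Q}|$, where $\mathcal{B}_{H_1,Q}^{\delta_1}$ is the sum (over the finitely many $S\in\mathcal{S}$) of the terms $\mathcal{B}_{H_1,Q}$ produced by Lemma~\ref{bound}. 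For the inductive step, I would add to the inductive bound for $\sum_{j<m}$ the Lemma~\ref{bound} estimates for $H_m$ itself with a parameter $\delta_m$, and then \emph{absorb} the resulting contributions of $H_m$ on the right-hand side into the left-hand side; this forces $\delta_m$ to be chosen small depending on the constants $C_{(n_i)}$ and on the previously chosen $\delta_j$'s.

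Concretely, the key bookkeeping step is the following. After summing, the only ``bad'' terms appearing on the right are of the form $\lambda_{H'}|[\mathbf{F}]_{H',R,Q}|$ with $H'\succeq H_m$, where $\lambda_{H_m}$ is a sum of a small multiple $C\delta_m$ (from applying Lemma~\ref{bound} to $H_m$) plus large multiples $C\delta_j^{-1}$ coming from every $j<m$ for which Lemma~\ref{bound} applied to $H_j$ produced the term $H_m$ on its ``$\succeq$'' side. Since there are only finitely many such $j$ and each contributes a fixed constant times $\delta_j^{-1}$, the coefficient $\lambda_{H_m}$ is bounded by $C\delta_m + C'(\delta_1,\dots,\delta_{m-1})$. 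Choosing the $\delta_j$ in the order $j=N,N-1,\dots,1$ (i.e.\ from the \emph{largest} hypergraph downward) one can make $\lambda_{H_m}<1$ for every $m$, with room to spare, so that the $H_m$-term can be subtracted from both sides. Finally one chooses the very first parameters small enough that the leftover coefficients of the genuinely uncontrollable terms $H'\succ H$ total at most $\epsilon$; the averaging paraproduct-type expression $\mathcal{B}_{H,Q}^{\epsilon}$ is then the (finite, $\delta$- and $(n_i)$-dependent) linear combination $\sum_{j\le m}\sum_{S} c_{j,S}\,\mathcal{B}_{H_j,Q}$ of the individual terms furnished by Lemma~\ref{bound}, and its uniform bound $\max_{Q\in\mathcal{T}\cup\mathcal{L}(\mathcal{T})}\mathcal{B}_{H,Q}^{\epsilon}\lesssim_{(n_i),\epsilon}1$ follows immediately from the corresponding uniform bounds in Lemma~\ref{bound}, since it is a finite combination with coefficients depending only on $(n_i)$ and $\epsilon$.

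The main obstacle is organizing the absorption so that it is consistent: each $\delta_m$ must be small relative to constants that involve all $\delta_j$ with $j\neq m$ (smaller ones feed in through $\delta_j^{-1}$ factors, and $\delta_m$ itself must be small), so one has to fix the order of choice carefully and verify that no circular dependence arises. Working from the top of the order $\preceq$ downward resolves this, because the ``$\succ H$'' terms that remain uncontrolled for $H=H_m$ only ever receive contributions from indices $j\le m$, so their coefficients are already frozen once we have chosen $\delta_N,\dots,\delta_m$; this is exactly the point where finiteness and total ordering of $\mathcal{H}_{(n_i)}$ are essential. A secondary, purely notational, point is to keep track that the ``$R\in\mathcal{S}$'' sums in Lemma~\ref{bound} are finite (there are at most $\prod_i 2^{n_i}$ choices of $S$), so all the combinatorial constants $C_{(n_i)}$, $|\mathcal{S}|$, $|\mathcal{S}^{(3)}|$ multiply up to a single constant depending only on $(n_i)$, and likewise $\mathcal{B}_{H,Q}^\epsilon$ is a genuine (finite) averaging paraproduct-type expression as required.
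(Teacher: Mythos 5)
Your overall strategy (finite induction along the total order $\preceq$, with an absorption step at each stage) is the right one, but the quantitative bookkeeping you describe does not match how Lemma~\ref{bound} actually behaves, and you omit the one trick that makes the induction close.

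First, a factual slip: you claim that $\lambda_{H_m}$ picks up ``large multiples $C\delta_j^{-1}$ coming from every $j<m$ for which Lemma~\ref{bound} applied to $H_j$ produced the term $H_m$ on its `$\succeq$' side.'' In Lemma~\ref{bound} the $\delta^{-1}$ weight sits in front of $\sum_{H'\prec H}$; the $\succeq$ side carries the \emph{small} factor $\delta$. For $j<m$ one has $H_j\prec H_m$, so $H_m$ can only appear on the $\succeq$ side of the conclusion for $H_j$ and therefore inherits only $C\delta_j$, never $\delta_j^{-1}$. Thus $\lambda_{H_m}$ is actually harmless. The terms that genuinely accumulate $\delta_j^{-1}$ factors are $H_k$ with $k<m$ (coming from the $\prec$ side of each $H_j$ with $k<j\le m$), and their total coefficient, involving $\sum_{k<j\le m}\delta_j^{-1}$, cannot be made $<1$ for any choice of $\delta_j\in(0,1)$. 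So a plain summation of Lemma~\ref{bound} over $j\le m$ followed by absorption does not close, and the proposal to fix the $\delta_j$ ``from the top downward'' does nothing to help, since those $\delta_j^{-1}$ factors are produced only by indices $j\le m$.

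The missing idea is that the $\delta^{-1}\sum_{H'\prec H}$ contribution must be re-estimated by the inductive hypothesis, and that the inductive hypothesis is a statement for \emph{all} $\epsilon''\in(0,1)$, which the paper invokes at the very small scale $\epsilon''=\delta^2$ (with $\delta$ determined upfront from the target $\epsilon$). Then $\delta^{-1}\cdot\epsilon''=\delta$ is again small, and the genuinely uncontrollable terms $H'\succ H_s$ come out with a coefficient of order $\epsilon$, while the overlap with the LHS has coefficient bounded by $1/2$ and can be absorbed. Without this ``squared scale'' choice, the $\delta^{-1}$ blowup is never contained. Once you add this step, the rest of your plan — summing, absorbing, and collecting a finite linear combination of the $\mathcal{B}$'s into $\mathcal{B}_H^\epsilon$ with the required uniform bound — goes through, essentially as in the paper.
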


\begin{proof} As the totally ordered set $(\mathcal{H}_{(n_i)}, \preceq)$ is finite, we will prove this claim by induction over the hypergraphs from this family. Before we begin, let $H \in \mathcal{H}_{(n_i)}$ be arbitrary non-maximal hypergraph and let $H_s$ be an immediate successor of $H$. Let $C_{(n_i)}$ be as in Lemma \ref{bound}. Suppose that there exists an averaging paraproduct-term $\mathcal{B}^{\epsilon'}_{H,Q}$ such that
\begin{equation}\sum_{\substack{H' \in \mathcal{H}_{(n_i)}, H' \preceq H \\ S \in \mathcal{S}}} | \left[ \mathbf{F} \right]_{H',S,Q} | \leq \square \mathcal{B}_{H,Q}^{\epsilon'} + \left( \frac{\epsilon}{4C_{(n_i)}|\mathcal{H}_{(n_i)}||V|} \right)^2 \sum_{\substack{H' \in \mathcal{H}_{(n_i)}, H' \succ H \\ S \in \mathcal{S}}} | \left[ \mathbf{F} \right]_{H',S,Q} |, \label{ind}\end{equation}
where $\epsilon' := \left( \frac{\epsilon}{4C_{(n_i)}|\mathcal{H}_{(n_i)}||V|} \right)^2$ and $\epsilon \in \left< 0,1 \right>$ is arbitrary. Applying Lemma \ref{bound} for every $H' \in \mathcal{H}_{(n_i)}, H' \leq H_s$ and $\delta := \frac{\epsilon}{4C_{(n_i)}|\mathcal{H}_{(n_i)}| |V|}$, we have
{\allowdisplaybreaks
\begin{align}
\!\!\!\!\!\!\!\!\!\!\sum_{\substack{H' \in \mathcal{H}_{(n_i)}, H' \preceq H_s \\ S \in \mathcal{S}}}\!\!\!\!\!\!\!\!\!\! | \big[ \mathbf{F} &\big]_{H',S,Q} | \leq \!\!\!\!\!\!\!\!\!\!\sum_{\substack{H' \in \mathcal{H}_{(n_i)}, H' \preceq H_s \\ S \in \mathcal{S}}}\!\!\!\!\!\!\!\!\!\! \square \mathcal{B}_{H',Q} + \frac{4C_{(n_i)}^2|\mathcal{H}_{(n_i)}|^2|V|^2}{\epsilon} \!\!\!\!\!\!\!\!\!\!\sum_{\substack{H'' \in \mathcal{H}_{(n_i)}, H'' \preceq H \\ R \in \mathcal{S}}}\!\!\!\!\!\!\!\!\!\! | \left[ \mathbf{F} \right]_{H'',R,Q} | \nonumber\\
&+ \frac{\epsilon}{4} \sum_{\substack{H'' \in \mathcal{H}_{(n_i)} \\ R \in \mathcal{S}}} | \left[ \mathbf{F} \right]_{H'',R,Q} | \nonumber\\
\stackrel{(\ref{ind})}{\leq}& \!\!\!\!\!\!\!\!\!\!\sum_{\substack{H' \in \mathcal{H}_{(n_i)}, H' \preceq H_s \\ S \in \mathcal{S}}}\!\!\!\!\!\!\!\!\!\! \square \mathcal{B}_{H',Q} + \frac{4C_{(n_i)}^2|\mathcal{H}_{(n_i)}|^2|V|^2}{\epsilon} \square \mathcal{B}_{H,Q}^{\epsilon'} + \frac{\epsilon}{2} \sum_{\substack{H' \in \mathcal{H}_{(n_i)} \\ S \in \mathcal{S}}} | \left[ \mathbf{F} \right]_{H',S,Q} | \nonumber\\
\leq& \!\!\!\!\!\!\!\!\!\!\sum_{\substack{H' \in \mathcal{H}_{(n_i)}, H' \preceq H_s \\ S \in \mathcal{S}}}\!\!\!\!\!\!\!\!\!\! \square \mathcal{B}_{H',Q} + \frac{4C_{(n_i)}^2|\mathcal{H}_{(n_i)}|^2|V|^2}{\epsilon} \square \mathcal{B}_{H,Q}^{\epsilon'} + \frac{\epsilon}{2} \!\!\!\!\!\!\!\!\!\!\sum_{\substack{H' \in \mathcal{H}_{(n_i)}, H' \succ H_s \\ S \in \mathcal{S}}}\!\!\!\!\!\!\!\!\!\! | \left[ \mathbf{F} \right]_{H',S,Q} | \nonumber\\
&+ \frac{1}{2} \!\!\!\!\!\!\!\!\!\!\sum_{\substack{H' \in \mathcal{H}_{(n_i)}, H' \preceq H_s \\ S \in \mathcal{S}}}\!\!\!\!\!\!\!\!\!\! | \left[ \mathbf{F} \right]_{H',S,Q} | \label{raspis}.
\end{align}
}where we used $|\mathcal{S}| \leq |V|$. Moving the last sum on the left side of the inequality and multiplying the inequality by $2$, we get
\begin{equation}\sum_{\substack{H' \in \mathcal{H}_{(n_i)}, H' \preceq H_s \\ S \in \mathcal{S}}}\!\!\!\!\!\!\!\!\!\! | \left[ \mathbf{F} \right]_{H',S,Q} | \leq \square \mathcal{B}^{\epsilon}_{H_s,Q} + \epsilon \!\!\!\!\!\!\!\!\!\!\sum_{\substack{H' \in \mathcal{H}_{(n_i)}, H' \succ H_s \\ S \in \mathcal{S}}}\!\!\!\!\!\!\!\!\!\! | \left[ \mathbf{F} \right]_{H',S,Q} |, \label{step}\end{equation}
with additional notation
$$\mathcal{B}^{\epsilon}_{H_s,Q} := 2\!\!\!\!\!\!\!\!\!\!\sum_{\substack{H' \in \mathcal{H}_{(n_i)}, H' \preceq H_s \\ S \in \mathcal{S}}}\!\!\!\!\!\!\!\!\!\! \mathcal{B}_{H',Q} + \frac{8C_{(n_i)}^2|\mathcal{H}_{(n_i)}|^2|V|^2}{\epsilon} \mathcal{B}_{H,Q}^{\epsilon'},$$
which is an averaging paraproduct-type expression.

Now we proceed to the induction. The induction basis for the minimal hypergraph $H_m$ is actually (\ref{step}) with $H_s = H_m$ and it follows from (\ref{raspis}), where, instead of (\ref{ind}) (we cannot refer to it as $H_m$ does not have preceding elements), we use a trivial inequality
$$0 \leq \square \mathcal{B}_{H_m,Q}^{\epsilon'} + \left( \frac{\epsilon}{4C_{(n_i)}|\mathcal{H}_{(n_i)}||V|} \right)^2 \sum_{\substack{H' \in \mathcal{H}_{(n_i)}}} | \left[ \mathbf{F} \right]_{H',S,Q} |$$
for $ \mathcal{B}_{H_m,Q}^{\epsilon'} := 0$, which trivially satisfies the required bound. Suppose that the claim of the lemma is satisfied for certain $H \in \mathcal{H}_{(n_i)}$, i.e.\@ we have (\ref{ind}). Then the same claim follows from its successor $H_s$, which is actually (\ref{step}), with $\mathcal{B}^{\epsilon}_{H_s,Q}$, which also satisfies the required bound by mathematical induction. With this, the required mathematical induction is complete.
\end{proof}

\begin{lemma} \label{noeps} For every $r$-partite $r$-regular complete hypergraph $H$ and for each $S \in \mathcal{S}$ there exist an averaging paraproduct-type expression $\mathcal{B}_{H,(\emptyset)}$ satisfying
$$\max_{Q \in \mathcal{T} \cup \mathcal{L}(\mathcal{T})} \mathcal{B}_{H,Q} \lesssim_{\left( n_i \right)} 1$$
and
$$\sum_{\substack{H' \in \mathcal{H}_{(n_i)} \\ S \in \mathcal{S}}} | \left[ \mathbf{F} \right]_{H',S,Q} | \leq \square \mathcal{B}_{H,Q}.$$
\end{lemma}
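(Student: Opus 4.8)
The plan is to obtain this lemma as a corollary of the preceding one by running it at the top of the $\preceq$-ordering, so that the error term on the right-hand side disappears. Recall that, as recorded before Lemma~\ref{bound}, $(\mathcal{H}_{(n_i)}, \preceq)$ is a totally ordered finite set, hence it has a $\preceq$-maximal element; call it $H_{\max}$. For $H = H_{\max}$ the condition $H' \preceq H_{\max}$ holds for every $H' \in \mathcal{H}_{(n_i)}$, while the set $\{ H' \in \mathcal{H}_{(n_i)} : H' \succ H_{\max} \}$ is empty. Thus the second sum on the right-hand side of the inequality in the previous lemma vanishes, and for \emph{any} fixed $\epsilon \in \left< 0,1 \right>$ that lemma produces an averaging paraproduct-type expression $\mathcal{B}^{\epsilon}_{H_{\max},Q}$ with $\max_{Q \in \mathcal{T} \cup \mathcal{L}(\mathcal{T})} \mathcal{B}^{\epsilon}_{H_{\max},Q} \lesssim_{(n_i),\epsilon} 1$ and
$$\sum_{\substack{H' \in \mathcal{H}_{(n_i)} \\ S \in \mathcal{S}}} | \left[ \mathbf{F} \right]_{H',S,Q} | \leq \square \mathcal{B}^{\epsilon}_{H_{\max},Q}.$$

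Next I would simply fix a numerical value of $\epsilon$, say $\epsilon = \tfrac12$, and set $\mathcal{B}_{H,Q} := \mathcal{B}^{1/2}_{H_{\max},Q}$; note that this expression does not actually depend on the $H$ or $S$ appearing in the statement, since the left-hand side of the claimed inequality already ranges over all of $\mathcal{H}_{(n_i)}$ and all of $\mathcal{S}$. With this choice the required inequality $\sum_{H',S} | \left[ \mathbf{F} \right]_{H',S,Q} | \leq \square \mathcal{B}_{H,Q}$ is precisely the display above, and because $\epsilon = \tfrac12$ is now an absolute constant, the bound $\max_{Q \in \mathcal{T} \cup \mathcal{L}(\mathcal{T})} \mathcal{B}^{1/2}_{H_{\max},Q} \lesssim_{(n_i),1/2} 1$ reads simply $\max_{Q \in \mathcal{T} \cup \mathcal{L}(\mathcal{T})} \mathcal{B}_{H,Q} \lesssim_{(n_i)} 1$. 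Since a linear combination of averaging paraproduct-type expressions is again one, $\mathcal{B}_{H,Q}$ is of the required form.

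There is essentially no analytic obstacle here; the lemma is pure bookkeeping on top of the previous one. The only points that need a word of care are that $H_{\max}$ genuinely lies in $\mathcal{H}_{(n_i)}$ and that no element of $\mathcal{H}_{(n_i)}$ strictly exceeds it — both immediate from the definition of $\preceq$ and the finiteness and total ordering of $\mathcal{H}_{(n_i)}$ — together with the harmless observation that the constant $\epsilon$ may be frozen without affecting the $(n_i)$-dependence of the implicit constant.
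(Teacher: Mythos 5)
Your argument is exactly the paper's: take the $\preceq$-maximal hypergraph $H_M$, apply the preceding lemma with any fixed $\epsilon$ (so the $H' \succ H_M$ error sum is empty), and set $\mathcal{B}_{H,Q} := \mathcal{B}^{\epsilon}_{H_M,Q}$. Nothing is missing; the proposal matches the paper's proof in both substance and route.
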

\begin{proof} As discussed while defining the totally ordered set $(\mathcal{H}_{(n_i)}, \preceq)$, there exists a maximal hypergraph $H_M$. The claim of this lemma follows from previous lemma by applying it for any fixed $\epsilon \in \left< 0,1 \right>$ and then by using $\mathcal{B}_{H,Q} := \mathcal{B}_{H_M,Q}^{\epsilon}$
\end{proof}

For each tuple of functions $\mathbf{F}$ and each finite convex tree $\mathcal{T}$ we define
$$\Lambda_{\mathcal{T}} \left( \mathbf{F} \right) := \sum_{Q \in \mathcal{T}} \left| Q \right| \left[ \mathbf{F} \right]_{H,S,Q}$$
where $H = (V,E)$ is any $r$-partite $r$-uniform labeled hypergraph and $S = (S^{(i)})_{1\leq i \leq r}$ is a tuple such that $S^{(i)} \subseteq V^{(i)}$ for each $i \in \left\{ 1, \dots, r \right\}$ and there exists $i_0 \in \left\{ 1, \dots, r \right\}$ such that $|S^{(i_0)}| \geq 2$.

\begin{lemma} \label{enough} Let $H=(V,E)$ be a $r$-regular $r$-uniform complete labeled hypergraph and let $\mathcal{T}$ be a finite convex tree. Suppose that for each $Q \in \mathcal{T}$ there exists an averaging paraproduct-type term $\mathcal{B}_{H,Q}$ such that
$$|[\mathbf{F}]_{H,S,Q}| \leq \square \mathcal{B}_{H,Q} \,\,\, \textrm{and} \,\,\, \max_{Q \in \mathcal{T} \cup \mathcal{L}(\mathcal{T})} \mathcal{B}_{H,Q} \lesssim_{\left( n_i \right)} 1.$$
Then,
$$| \Lambda_{\mathcal{T}} ( \mathbf{F} ) | \lesssim_{\left( n_i \right)} |Q_{\mathcal{T}}|.$$
\end{lemma}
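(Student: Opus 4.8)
The idea is a telescoping (summation-by-parts) argument over the convex tree $\mathcal{T}$, exploiting the hypothesis $|[\mathbf{F}]_{H,S,Q}| \le \square\mathcal{B}_{H,Q}$ together with the definition of $\square$. First I would estimate
$$|\Lambda_{\mathcal{T}}(\mathbf{F})| \le \sum_{Q \in \mathcal{T}} |Q|\, \square\mathcal{B}_{H,Q} = \sum_{Q \in \mathcal{T}} |Q| \Big( \sum_{Q' \in \mathcal{C}(Q)} \tfrac{1}{2^r}\mathcal{B}_{H,Q'} - \mathcal{B}_{H,Q} \Big).$$
Since each child $Q'$ of $Q$ has $|Q'| = 2^{-r}|Q|$, the first term becomes $\sum_{Q \in \mathcal{T}} \sum_{Q' \in \mathcal{C}(Q)} |Q'|\,\mathcal{B}_{H,Q'}$, i.e.\ a sum of $|Q'|\mathcal{B}_{H,Q'}$ over all cubes $Q'$ that are children of some cube in $\mathcal{T}$. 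By convexity of $\mathcal{T}$, the children of cubes in $\mathcal{T}$ are exactly the cubes in $\big(\mathcal{T}\setminus\{Q_{\mathcal{T}}\}\big) \cup \mathcal{L}(\mathcal{T})$: every non-root member of $\mathcal{T}$ has its parent in $\mathcal{T}$ (convexity plus the root being maximal), and the leaves are by definition children of members of $\mathcal{T}$ with parent in $\mathcal{T}$.

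Carrying out the cancellation, the sum $\sum_{Q \in \mathcal{T}} |Q|\,\square\mathcal{B}_{H,Q}$ telescopes to
$$\sum_{Q' \in \mathcal{L}(\mathcal{T})} |Q'|\,\mathcal{B}_{H,Q'} - |Q_{\mathcal{T}}|\,\mathcal{B}_{H,Q_{\mathcal{T}}},$$
because each $Q \in \mathcal{T}\setminus\{Q_{\mathcal{T}}\}$ appears once with a positive sign (as a child, from the $\square$ of its parent) and once with a negative sign (from its own $\square$ term), while the root contributes only the negative term and each leaf contributes only the positive term. Then I bound $\big|\sum_{Q' \in \mathcal{L}(\mathcal{T})} |Q'|\,\mathcal{B}_{H,Q'}\big| \le \big(\max_{Q \in \mathcal{T}\cup\mathcal{L}(\mathcal{T})} \mathcal{B}_{H,Q}\big) \sum_{Q' \in \mathcal{L}(\mathcal{T})} |Q'|$, and since the leaves of $\mathcal{T}$ are pairwise disjoint and all contained in $Q_{\mathcal{T}}$, we have $\sum_{Q' \in \mathcal{L}(\mathcal{T})} |Q'| \le |Q_{\mathcal{T}}|$. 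Likewise $|Q_{\mathcal{T}}\,\mathcal{B}_{H,Q_{\mathcal{T}}}| \lesssim_{(n_i)} |Q_{\mathcal{T}}|$ from the max bound. Combining, $|\Lambda_{\mathcal{T}}(\mathbf{F})| \lesssim_{(n_i)} |Q_{\mathcal{T}}|$.

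The one point requiring care — and the step I expect to be the main obstacle — is the bookkeeping that justifies the telescoping: one must check that $\mathcal{T}$ being a \emph{finite} convex tree guarantees that the multiset of "children of cubes in $\mathcal{T}$" equals $(\mathcal{T}\setminus\{Q_{\mathcal{T}}\}) \cup \mathcal{L}(\mathcal{T})$ with each cube counted exactly once, so that no boundary terms other than the root and the leaves survive. Finiteness ensures the leaf set is finite and the sums are absolutely convergent; convexity ensures there are no "holes" (if $Q \in \mathcal{T}$ and $Q \ne Q_{\mathcal{T}}$, its parent lies between $Q$ and $Q_{\mathcal{T}}$, hence is in $\mathcal{T}$), so that every non-root member of $\mathcal{T}$ is reached as a child exactly once. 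Since $\mathcal{B}_{H,Q}$ is an averaging paraproduct-type term, hence a finite honest number for each $Q$, no convergence subtleties arise beyond this. Once the index-set identity is established, the remainder is the routine disjointness-of-leaves volume estimate described above.
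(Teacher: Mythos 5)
Your proposal is correct and follows essentially the same Abel-summation/telescoping argument as the paper: expand $\square$, use convexity to identify the children of $\mathcal{T}$-cubes with $(\mathcal{T}\setminus\{Q_{\mathcal{T}}\})\cup\mathcal{L}(\mathcal{T})$, collapse the telescope to leaf and root boundary terms, and conclude via the uniform bound on $\mathcal{B}_{H,Q}$ and disjointness of the leaves. The only cosmetic difference is that the paper discards the root term $-|Q_{\mathcal{T}}|\mathcal{B}_{H,Q_{\mathcal{T}}}$ outright by invoking nonnegativity of the averaging paraproduct-type term, whereas you bound it separately using the max hypothesis; both are perfectly valid.
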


\begin{proof}We have
\begin{align*}| \Lambda_{\mathcal{T}} ( \mathbf{F} ) | &\leq \sum_{Q \in \mathcal{T}} \left| Q \right| | \left[ \mathbf{F} \right]_{H,S,Q} | \leq \sum_{Q \in \mathcal{T}} \left| Q \right| \square \mathcal{B}_{H,Q} \\
&= \sum_{Q \in \mathcal{T}} \bigg( \sum_{Q' \in \mathcal{C}(Q)}|Q'|\mathcal{B}_{H,Q'} - |Q|\mathcal{B}_{H,Q} \bigg) \\
&= \sum_{Q \in \mathcal{L}(\mathcal{T})} |Q|\mathcal{B}_{H,Q} - |Q_{\mathcal{T}}|\mathcal{B}_{H,Q_{\mathcal{T}}} \lesssim_{\left( n_i \right)}  \sum_{Q \in \mathcal{L}(\mathcal{T})} |Q| \leq |Q_{\mathcal{T}}|,
\end{align*}
where we also used that the averaging paraproduct-type term, given nonnegative functions $\mathbf{F}$, is also nonnegative.
\end{proof}

\begin{proposition} \label{tree}
Let $H=(V,E)$ be a $r$-regular $r$-uniform labeled hypergraph such that its label function $l_E$ is injective and, more explicitly, $l_E(e)=F_e$ for each $e\in E$. For any finite convex tree $\mathcal{T}$ with root $Q_{\mathcal{T}}$ we have
$$|\Lambda_{\mathcal{T}} \left( \mathbf{F} \right)| \lesssim_{(n_i)} \left| Q_{\mathcal{T}} \right| \prod_{e \in E} \max_{Q \in \mathcal{T} \cup \mathcal{L} \left( \mathcal{T} \right)} [ F_e^{d_e} ]_Q^{\frac{1}{d_e}}.$$
\end{proposition}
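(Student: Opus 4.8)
The plan is to combine the $\square$-domination of $|[\mathbf{F}]_{H,S,Q}|$ established in Lemmas~\ref{bound}--\ref{noeps} with the telescoping estimate of Lemma~\ref{enough}, after a routine homogeneity reduction. Since $H$ is complete, all thresholds coincide, so $d_e\equiv d$; and since $l_E$ is injective, each function $F_e$ enters the integrand defining $[\mathbf{F}]_{H,S,Q}$ exactly once, so $\Lambda_{\mathcal{T}}$ is separately linear in every $F_e$. Writing $\lambda_e:=\max_{Q\in\mathcal{T}\cup\mathcal{L}(\mathcal{T})}[F_e^{d}]_Q^{1/d}$, both sides of the asserted inequality are positively homogeneous of degree $1$ in each $F_e$ separately. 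If $\lambda_{e_0}=0$ for some $e_0$, then $[F_{e_0}^{d}]_{Q_{\mathcal{T}}}=0$, hence $F_{e_0}=0$ a.e.\ on $Q_{\mathcal{T}}$ and thus on every $Q\in\mathcal{T}$, which forces $[\mathbf{F}]_{H,S,Q}=0$ for all $Q\in\mathcal{T}$ and the inequality is trivial. Otherwise we replace each $F_e$ by $F_e/\lambda_e$, reducing to the normalized situation $\max_{Q\in\mathcal{T}\cup\mathcal{L}(\mathcal{T})}[F_e^{d}]_Q^{1/d}=1$, in which it suffices to prove $|\Lambda_{\mathcal{T}}(\mathbf{F})|\lesssim_{(n_i)}|Q_{\mathcal{T}}|$.

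By Lemma~\ref{enough} this last estimate follows once we exhibit, for each $Q\in\mathcal{T}$, a nonnegative linear combination $\widehat{\mathcal{B}}_{H,Q}$ of averaging paraproduct-type terms with $|[\mathbf{F}]_{H,S,Q}|\le\square\widehat{\mathcal{B}}_{H,Q}$ and $\max_{Q\in\mathcal{T}\cup\mathcal{L}(\mathcal{T})}\widehat{\mathcal{B}}_{H,Q}\lesssim_{(n_i)}1$; the proof of Lemma~\ref{enough} uses only the nonnegativity of $\mathcal{B}$ and the linearity of $\square$, so it applies to such combinations without change. I would build $\widehat{\mathcal{B}}_{H,Q}$ in two steps. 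First, apply Lemma~\ref{bound} to $H$ and the given selection $S$ with $\delta=1/2$: since $H$ is complete its copy classes $l_V^{-1}(L^{(i)}_j)$ are singletons, and as $|S^{(i_0)}|\ge2$ for some $i_0$ two of them meet $S^{(i_0)}$, so we are in the first case treated in the proof of Lemma~\ref{bound}, obtaining
\[
|[\mathbf{F}]_{H,S,Q}|\le\square\mathcal{B}_{H,Q}+C_{(n_i)}\!\!\!\sum_{\substack{H'\in\mathcal{H}_{(n_i)}\\ R\in\mathcal{S}}}\!\!\!|[\mathbf{F}]_{H',R,Q}|,\qquad \max_{Q\in\mathcal{T}\cup\mathcal{L}(\mathcal{T})}\mathcal{B}_{H,Q}\lesssim_{(n_i)}1.
\]
Second, apply Lemma~\ref{noeps} to dominate the full sum on the right by $\square\widetilde{\mathcal{B}}_{H,Q}$ for an averaging paraproduct-type expression with $\max_{Q\in\mathcal{T}\cup\mathcal{L}(\mathcal{T})}\widetilde{\mathcal{B}}_{H,Q}\lesssim_{(n_i)}1$; then $\widehat{\mathcal{B}}_{H,Q}:=\mathcal{B}_{H,Q}+C_{(n_i)}\widetilde{\mathcal{B}}_{H,Q}$ has the required two properties, by linearity of $\square$. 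Feeding $\widehat{\mathcal{B}}_{H,Q}$ into Lemma~\ref{enough} yields $|\Lambda_{\mathcal{T}}(\mathbf{F})|\lesssim_{(n_i)}|Q_{\mathcal{T}}|$, and undoing the normalization of the first paragraph restores the stated bound with the factor $\prod_{e\in E}\lambda_e=\prod_{e\in E}\max_{Q\in\mathcal{T}\cup\mathcal{L}(\mathcal{T})}[F_e^{d_e}]_Q^{1/d_e}$.

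The conceptually hard work --- the difference identity (Proposition~\ref{difference}), the sign lemmas (Lemma~\ref{neneg}, Lemma~\ref{holder}), the descent through the totally ordered set $(\mathcal{H}_{(n_i)},\preceq)$ in Lemma~\ref{bound}, and its bootstrapping to the error-free form in Lemma~\ref{noeps} --- has already been carried out, so Proposition~\ref{tree} is essentially an assembly step. The only points that still demand attention are the two reductions performed here: checking that the homogeneity/normalization step is legitimate, which is precisely where injectivity of $l_E$ is used (it makes $\Lambda_{\mathcal{T}}$ genuinely multilinear and all exponents equal to $d$, without which the scalings of the two sides would not match), and making sure that a general selection $S$ with merely $|S^{(i_0)}|\ge2$ --- not necessarily lying in $\mathcal{S}$ --- is first passed through Lemma~\ref{bound} so as to be expressed via selections in $\mathcal{S}$ before Lemma~\ref{noeps} is invoked. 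I do not expect any genuine obstacle beyond this bookkeeping.
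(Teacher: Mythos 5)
Your proposal reproves only the special case where $H$ itself is a complete $r$-partite $r$-uniform hypergraph. The proposition does not assume this: the standing hypothesis of the paper is merely that every \emph{connected component} of $H$ is complete, and Proposition~\ref{tree} is stated and proved under exactly that generality --- in fact the paper's proof explicitly splits into two halves, first the complete case and then ``Now suppose that we are given an arbitrary set of edges $E$.'' Your very first sentence ``Since $H$ is complete, all thresholds coincide, so $d_e\equiv d$'' imports an assumption that is not present, and from that point on everything you write is about the complete case.

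For the complete case your assembly is sound and is essentially a small variant of the paper's route: where the paper applies a one-shot Cauchy--Schwarz to replace the given selection $S$ (which may not lie in $\mathcal{S}$) by selections $S_1,S_2\in\mathcal{S}$ on copy-modified hypergraphs, and then invokes Lemma~\ref{noeps} and Lemma~\ref{enough}, you pass through Lemma~\ref{bound} with $\delta=1/2$ and then Lemma~\ref{noeps}; since the first case of Lemma~\ref{bound}'s proof is precisely that Cauchy--Schwarz step (with $\mathcal{B}_{H,Q}\equiv 0$), these are the same argument in slightly different packaging. That part of the bookkeeping is fine, including the check that a singleton copy-class structure puts you in the first case of Lemma~\ref{bound}.

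What is genuinely missing is the reduction of the general (disconnected, possibly with isolated vertices) hypergraph to the complete one. In the paper this occupies roughly half of the proof and is not a triviality: one discards isolated vertices, factors $[\mathbf{F}]_{H,S,Q}=\prod_{j=1}^{k}[\mathbf{F}_{E_j}]_{H_j,S_j,Q}$ over the connected components, and then distinguishes whether some single component carries two selected vertices (in which case one applies the complete case to that component and bounds the remaining factors by Lemma~\ref{holder}) or whether two distinct components each carry a selected vertex. The second alternative requires an arithmetic--geometric/Cauchy--Schwarz step to pass to $\frac12[\mathbf{F}_{E_1}]_{H_1,S_1,Q}^2+\frac12[\mathbf{F}_{E_2}]_{H_2,S_2,Q}^2$, followed by a case analysis on $d^{(1)}$ and the sizes $l_i$, involving Jensen's inequality for $x\mapsto x^2$ and the construction of auxiliary hypergraphs with a doubled vertex, together with a verification that doubling a vertex does not increase the relevant thresholds $d_e$. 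None of this appears in your proposal, so as written it does not prove the stated proposition. You should either restrict your claim to the complete case or supply the component-by-component reduction; the latter is where the real work of Proposition~\ref{tree} beyond the already-established lemmas actually lies.
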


\begin{proof} First, we will prove the proposition in the special case when $E = \prod_{i=1}^r \{ v^{(i)}_1, \dots, v^{(i)}_{n_i} \}$, i.e.\@ for a complete $r$-uniform hypergraph. In that case this number is same for each edge $e \in E$. First, notice that it will be enough to prove the claim of the proposition with additional assumptions
$$\left| Q_{\mathcal{T}} \right| = 1\,\,\,\mathrm{and}\,\,\,\max_{Q \in \mathcal{T} \cup \mathcal{L} \left( \mathcal{T} \right)} [ F_e^{d_e} ]_Q^{\frac{1}{d_e}} = 1 \,\,\,\textrm{for each}\,\,\, e \in E,$$
in which case we need to prove
$$\Lambda_{\mathcal{T}} \left( \mathbf{F} \right) \lesssim_{(n_i)} 1.$$
We are required to dominate each term $\left[ \mathbf{F} \right]_{H,S,Q}$, $Q \in \mathcal{T}$, from the definition of $\Lambda_{\mathcal{T}}$. First, notice that we do not necessarily have $S \in \mathcal{S}$. However, if we, without loss of generality, suppose that $\max_{i \in \mathbb{N}} |S^{(1)} \cap l_V^{-1}(L^{(1)}_i) | \geq 2$ and take $v_{i_1} \in S^{(1)} \cap l_V^{-1}(L^{(1)}_{i_1}),v_{i_1} \in S^{(2)} \cap l_V^{-1}(L^{(1)}_{i_2})$ for some $i_1 \neq i_2$, then using the Cauchy-Schwarz inequality we obtain
$$\left[ \mathbf{F} \right]_{H,S,Q} \leq \frac{1}{2}\left[ \mathbf{F} \right]_{H_1,S_1,Q} + \frac{1}{2}\left[ \mathbf{F} \right]_{H_2,S_2,Q}$$
for hypergraphs $H_1$ and $H_2$ and tuples of selected vertices $S_1$ and $S_2$ defined in the following way. For each $j,j' \in \left\{ 1,2 \right\}, j \neq j'$, a hypergraph $H_j$ has the label function $l_V^j \big|_{V \backslash \{ v_{i_{j'}} \}} := l_V \big|_{V \backslash \{ v_{i_{j'}} \}}$ and $l_V^j(v_{i_{j'}}) := l_V(v_{i_j})$. Also, $S^{(1)}_1 = S^{(1)}_2 := \{ v_1, v_2 \}$ and $S^{(i)}_1 = S^{(i)}_2 := \emptyset$ for $i \in \{ 2, \dots, r \}$. We can see that $S_1,S_2 \in \mathcal{S}$ for each choice of $j$. For $Q \in \mathcal{C}_r$, let $\mathcal{B}_{H,Q}$ be as in Lemma \ref{noeps}. Applying Lemma \ref{enough} and using the bound from Lemma \ref{noeps}, we have
$$\Lambda_{\mathcal{T}} ( \mathbf{F} ) \leq \frac{1}{2} \sum_{Q \in \mathcal{T}} |Q| \big( \left[ \mathbf{F} \right]_{H_1,S_1,Q} +\left[ \mathbf{F} \right]_{H_2,S_2,Q} \big)  \lesssim_{(n_i)} |Q_{\mathcal{T}}| = 1.$$

Now suppose that we are given an arbitrary set of edges $E$. It might happen that the hypergraph $H$ contains isolated vertices, i.e.\@ those that are not elements of any edge. If $v$ is an isolated vertex, then, by the definition of $[\mathbf{F}]_{H,S,Q}$ and the injectivity of $l_V$, the variable $l_V(v)$ will appear either in the expression $\mathbbm{h}_I^1(l_V(v))$ or in $\mathbbm{h}_I^0(l_V(v))$ for some $I \in \mathcal{C}_1$. In first case, integrating by that variable we get $[\mathbf{F}]_{H,S,Q} = 0$, while in the other case, since the integral of function $\mathbbm{h}_{I_k}$ equals one, the expression remains the same if we leave out that variable (and the vertex) from the expression. Therefore, isolated vertices give no significant contribution to the expression to $\Lambda_{\mathcal{T}}$, so we may assume that there exist $k \in \mathbb{N}$ and connected components $\prod_{i=1}^r V^{(i)}_j$ for each $j \in \{ 1, \dots, k \}$ with no isolated vertices. Notice that each number $d_e$ depends on which of the components the edge $e$ belongs to, so we will also denote that number as $d^{(j)}$, where $j \in \{ 1, \dots, k \}$ is such that $e \in \prod_{i=1}^r V^{(i)}_j$. We can also suppose that these $k$ components form complete $r$-partite $r$-uniform graphs by adding missing edges from the set $\cup_{j=1}^k \prod_{i=1}^r V^{(i)}_j$ and, for those edges $e$, defining $F_e \equiv 1$. For each $j \in \{ 1, \dots, k \}$, let $H_1, \dots, H_k$ be the $r$-partite $r$-uniform complete hypergraphs representing connected components of the hypergraph $H$; also, let $S_j = (S_j^{(i)})_{1 \leq i \leq r}$ be defined as $S_j^{(i)} := S^{(i)} \cap V^{(i)}_j$ for each $i \in \{ 1, \dots, r \}$ and $j \in \{ 1, \dots, k \}$. With the additional notation of $\mathbf{F}_E = \mathbf{F} = ( F_e )_{e \in E}$ and $\mathbf{F}_{E_j} = ( F_e )_{e \in E_j}$ for each $j = 1, \dots, k$ we can notice that
$$\Lambda_{\mathcal{T}} ( \mathbf{F}_E ) = \sum_{Q \in \mathcal{T}} |Q| \prod_{j=1}^k [\mathbf{F}_{E_j}]_{H_j,S_j,Q}.$$
The first case is when there exists $j \in \{ 1, \dots, k \}$ such that $|S^{(1)}_j| \geq 2$. We can apply this proposition to the hypergraph $H_j$ as it belongs to the first case that we already covered. Therefore
$$\sum_{Q \in \mathcal{T}} |Q| [\mathbf{F}_{E_j}]_{H_j,S_j,Q} \lesssim_{(n_i)} \left| Q_{\mathcal{T}} \right| \prod_{e \in E_j} \max_{Q \in \mathcal{T} \cup \mathcal{L} \left( \mathcal{T} \right)} [ F_e^{d^{(j)}} ]_Q^{\frac{1}{d^{(j)}}} = 1.$$
As for each $Q \in \mathcal{T}$ and each $j' \in \{ 1, \dots, k \} \backslash \{ j \}$, applying Lemma \ref{holder} we get
$$| [\mathbf{F}_{E_{j'}}]_{H_{j'},S_{j'},Q} | \leq [\mathbf{F}_{E_{j'}}]_{H_{j'},(\emptyset),Q} \leq \prod_{e \in E_{j'}} [ F_e^{d^{(j')}} ]_Q^{\frac{1}{d^{(j')}}} \leq 1.$$
It follows that
\begin{align*}
\Lambda_{\mathcal{T}} ( \mathbf{F}_E ) = \sum_{Q \in \mathcal{T}} |Q| [\mathbf{F}_{E_j}]_{H_j,S_j,Q} \prod_{\substack{1 \leq j' \leq k \\ j' \neq j}} [\mathbf{F}_{E_{j'}}]_{H_{j'},S_{j'},Q} \lesssim_{(n_i)} 1,
\end{align*}
which proves the claim of this proposition. The second case is when there exist $j_1,j_2 \in \{ 1, \dots, k \}$ such that $S^{(1)}_{j_1} \neq \emptyset \neq S^{(1)}_{j_2}$; without loss of generality, let $j_1 = 1$ and $j_2 = 2$. Using Lemma \ref{holder} in similar way as above, we can observe that
\begin{align*}| \left[ \mathbf{F}_E \right]_{H,S,Q} | &\leq | [\mathbf{F}_{E_1}]_{H_1,S_1,Q} || [\mathbf{F}_{E_2}]_{H_2,S_2,Q} | \leq \frac{1}{2}[\mathbf{F}_{E_1}]_{H_1,S_1,Q}^2 + \frac{1}{2}[\mathbf{F}_{E_2}]_{H_2,S_2,Q}^2.\end{align*}
By changing the roles of the vertices let us assume that $V^{(i)}_1 = \{ v_1^{(i)}, \dots, v_{l_i}^{(i)} \}$ for each $i \in \{ 1, \dots, r \}$ and that $v^{(1)}_1 \in S^{(1)}_1$. If $d^{(1)} = 1$, i.e.\@ if $l_1 = \dots = l_r = 1$, then, for $\mathcal{B}_{H_1,Q} := [ F_{(1,\dots,1)} ]^2_{H_1,(\emptyset),Q}$ we have
$$\square \mathcal{B}_{H_1,Q} = \sum_{\substack{R^{(1)} \subseteq \{ v_1^{(1)} \} \\ \dots \\ R^{(r)} \subseteq \{ v_1^{(r)} \} \\ R = (R^{(i)}) \neq (\emptyset) }} [ F_{(1,\dots,1)} ]^2_{H_1,R,Q} \geq [ F_{(1,\dots,1)} ]^2_{H_1,S,Q}.$$
Note as well that
$$\max_{Q \in \mathcal{T} \cup \mathcal{L}(\mathcal{T})} \mathcal{B}_{H_1,Q}(F_{(1,\dots,1)}) = \big( \max_{Q \in \mathcal{T} \cup \mathcal{L}(\mathcal{T})} [ F_{(1,\dots,1)} ]_{H_1,(\emptyset),Q} \big)^2 = 1.$$
Analogously, we construct $\mathcal{B}_{H_2,Q} := [ F_{(1,\dots,1)} ]^2_{H_2,(\emptyset),Q}$. The proof of the proposition is complete in this case after we apply Lemma \ref{enough} with $\mathcal{B}_{H,Q} := \frac{1}{2}\mathcal{B}_{H_1,Q}+\frac{1}{2}\mathcal{B}_{H_2,Q}$. In the case $l_1 = 1$ and $l_2 \geq 2$, using Jensen's inequality for the convex function $x \longmapsto x^2$ and the integral of type $\int_Q \frac{1}{|Q|}d\mathbbm{x}$, we have that
$$[\mathbf{F}_{E_1}]_{H_1,S_1,Q}^2 \leq [\mathbf{F}_{E_1'}]_{H_1',S_1',Q}.$$
Here, $H_1'$ is the $r$-partite $r$-uniform complete hypergraph with set of vertices $V' := V \cup \{ v_2^{(1)} \}$, set of edges
\[ E_1' := E_1 \cup \{ \{ v_2^{(1)} \} \cup (e \backslash \{ v_1^{(1)} \}) : v_1^{(1)} \in e \in E_1 \}\]
and the label function $l_{V'}$ given with $l_{V'} \big|_{V} := l_V$, while the value $l_{V'}(v_2^{(1)})$ can be chosen as an arbitrary copy of $x_{v^{(1)}_1}$, as long as $l_{V'}$ is an injective function. Also, $S_1^{'(1)} := \{ v_1^{(1)}, v_2^{(1)} \}$ and $S_1^{'(i)} := \emptyset$ for $i \in \{ 2, \dots, r \}$; in short, we copied the single vertex $v_1^{(1)}$ from the first part of the $r$-partition along with the edges that contain that vertex and selected only those two vertices ($v_1^{(1)}$ with its copy) out of all vertices in the hypergraph. Notice that $S_1' \in \mathcal{S}$; we can apply Lemma \ref{enough} with $\mathcal{B}_{H_1',Q}$ that we get from Lemma \ref{noeps}. It is important to notice that the numbers of vertices in each of the partition sets of the hypergraphs $H_1'$ and $H_2'$ have changed, therefore affecting the exponents $d_e$ and possibly changing the range of possible exponents $p_e$ while applying Lemma \ref{enough}. However, this is not the case as we only increased $l_1$ by one (when adding $v_2^{(1)}$) and $d_e > l_1$, so the maximum from the definition of that exponent remains the same.

The remaining case is when $l_1 \geq 2$. First we can bound
$$[\mathbf{F}_{E_1}]_{H_1,S_1,Q}^2 \leq [\mathbf{F}_{E_1}]_{H_1,S_1',Q}^2,$$
in a way that $S_1´ = ( \{ v_1^{(1)} \}, \emptyset, \dots, \emptyset )$. Then we can group the integral expression depending on whether any function $F_e$ or any of the Haar functions appear to be evaluated in the cancellative variable $x_{v_1^{(1)}}$, the non-cancellative variable $x_{v_2^{(1)}}$ or if it has none of these two variables. Then, by the application of arithmetic-geometric inequality and also by bounding the complete non-cancellative integral expression with $1$, we get
$$[\mathbf{F}_{E_1}]_{H_1,S_1',Q}^2 \leq [\mathbf{F}_{E_1''}]_{H_1'',S_1'',Q}.$$
This time, $H_1''$ is the $r$-partite $r$-uniform complete hypergraph with a set of vertices $V'' := ( V \cup \{ v_1^{'(1)} \}) \backslash \{ v_2^{(1)} \}$, a set of edges $E_1''$ given with $E_1'' := \{ e \in E_1 : v_2^{(1)} \notin e \} \cup \{ \{ v_1^{'(1)} \} \cup (e \backslash \{ v_1^{(1)} \}) : v_1^{(1)} \in e \in E_1 \} $ and the label function $l_{V''}$ such that $l_{V''} \big|_{V} := l_V$ and $l_{V''}(v_1^{'(1)})$ is a copy of $x_1^{(1)}$, in a way that $l_{V'}$ is still an injective function. With that, $S_1^{''(1)} := \{ v_1^{(1)}, v_1^{'(1)} \}$ and $S_1^{''(i)} := \emptyset$ for $i \in \{ 2, \dots, r \}$. In this case we copied the vertex $v_1^{(1)}$ along with the edges that contain it and left off $v_2^{(1)}$ with each edge that might contain it. The selected vertices are only the first, already selected, vertex $v_1^{(1)}$ along with its new copy $v_1^{'(1)}$. Note that, again, $S_1'' \in \mathcal{S}$, so we use Lemma \ref{noeps} to get $\mathcal{B}_{H_1'',Q}$ and then the result of the proposition follows by applying Lemma \ref{enough} again. As in the previous case, we can notice that the lemma is applied for the same number $d_e$ as the number of vertices in each of the partition sets remains unchanged.
\end{proof}

\section{Decomposition into entangled dyadic paraproducts} \label{cetvrtaprava}

Let us introduce the notation of the \emph{elementary tensor product}, which, for two functions $f,g : \mathbb{R} \rightarrow \mathbb{C}$, is denoted and defined as
$$(f \otimes g) (x,y) := f(x)g(y) \,\, \textnormal{for each} \,\, x,y \in \mathbb{R}.$$
By the associativity of the operation $\otimes$, we will assume the notation $f_1 \otimes f_2 \otimes \dots \otimes f_m$ as an elementary tensor product of more than two functions and also write $\otimes_{i=1}^m f_i$. For what follows, we consider all functions of the form
$$\mathbbm{h}_Q^S := | Q |^{\frac{1}{2}} \displaystyle \bigotimes_{k=1}^r \bigg( \bigotimes_{v^{(k)}_i \in S^{(i)}} \mathbbm{h}_{I_i^{(k)}}^1 \bigg) \bigg( \bigotimes_{v^{(k)}_i \in ( S^{(i)} )^c} \mathbbm{h}_{I_i^{(k)}}^0 \bigg),$$
where $Q = \prod_{i=1}^r \prod_{j=1}^{n_i} I_j^{(i)} \in \mathcal{C}_n$ is arbitrary and $S = ( (S^{(k)})_{k=1}^r ) \neq (\emptyset)$ is an $r$-tuple of selected vertices from the $r$-partitioned set of vertices, i.e.\@ $S^{(k)} \subseteq V^{(k)}$ for each $k \in \{ 1, \dots, r \}$. Notice that these are the tensor products of $\textnormal{L}^2$-normalized Haar functions with at least one of them being cancellative. This means that for a perfect dyadic Calder\' on-Zygmund kernel $K$, being a square-integrable function over $\mathbb{R}^n$, we have
\begin{equation}K = \sum_{\substack{S = (S^{(i)})_{i=1}^r \\ \left( \forall i \in \left\{ 1, \dots, r \right\} \right) S^{(i)} \subseteq V^{(i)}\\ \left( \exists i_0 \in \left\{ 1, \dots, r \right\} \right) \left| S^{(i_0)} \right| \neq 0}} \sum_{Q = \prod_{i=1}^r \prod_{j=1}^{n_i} I^{(i)}_{j_i} \in \mathcal{C}_n} \left< K, \mathbbm{h}_Q^S \right>_{\textnormal{L}^2 (\mathbb{R}^n)}  \mathbbm{h}_Q^S.\label{kernel}\end{equation}
Notice that, as $K$ is constant on dyadic cubes not intersecting the diagonal and each of these tensor products has a cancellation in at least one of the variables, the corresponding scalar products equal zero, so we can actually consider this sum only over dyadic cubes $Q = \prod_{i=1}^r \prod_{j=1}^{n_i} I^{(i)}_{j_i}$ for which $I^{(i)}_{j_1} = I^{(i)}_{j_2}$ for each $j_1,j_2 \in \left\{ 1, \dots, n_i \right\}$ and $i \in \left\{1, \dots, r \right\}$. Using this, we can present the form $\Lambda_E$ as
$$\Lambda_E \left( \mathbf{F} \right) = \!\!\!\!\!\!\!\!\!\!\sum_{\substack{S = (S^{(i)})_{i=1}^r \\ \left( \forall i \in \left\{ 1, \dots, r \right\} \right) S^{(i)} \subseteq V^{(i)}\\ \left( \exists i_0 \in \left\{ 1, \dots, r \right\} \right) \left| S^{(i_0)} \right| \neq 0}}\!\!\!\!\!\!\!\!\!\! \sum_{Q = \prod_{i=1}^r \left( I^{(i)} \right)^{n_i} \in \mathcal{C}_n}\!\!\!\!\!\!\!\!\!\! \left< K, \mathbbm{h}_Q^S \right>_{\textnormal{L}^2 (\mathbb{R}^n)}  \int_{\mathbb{R}^m} \bigg( \prod_{e \in E} F_e ( \mathbbm{x}_e ) \bigg) \mathbbm{h}_Q^S (\mathbbm{x}) d\mathbbm{x},$$
using the assumption that functions $F_e, e \in E$ and $K$ are bounded and compactly supported. Therefore the expression under the integral is absolutely integrable, so we can use the Lebesgue dominated convergence theorem. Notice that for the proof of Theorem \ref{T1thm}(a) it is enough to show that the expression
$$\Lambda_E^S = \Lambda_E^S \left( \mathbf{F} \right) := \sum_{Q = \prod_{i=1}^r \left( I^{(i)} \right)^{n_i} \in \mathcal{C}_n} \left< K, \mathbbm{h}_Q^S \right>_{\textnormal{L}^2 (\mathbb{R}^n)}  \int_{\mathbb{R}^m} \bigg( \prod_{e \in E} F_e ( \mathbbm{x}_e ) \bigg) \mathbbm{h}_Q^S (\mathbbm{x}) d\mathbbm{x}$$
also satisfies inequality (\ref{T1bound}); we will call these expressions \emph{entangled dyadic paraproducts}. Another useful way of writing this form will be
$$\Lambda_E^S \left( \mathbf{F} \right) = \sum_{Q = \prod_{i=1}^r \left( I^{(i)} \right)^{n_i} \in \mathcal{C}_n} |Q| \lambda_Q \left[ \mathbf{F} \right]_{H,S,Q},$$
with $\lambda_Q$ defined as $\lambda_Q := |Q|^{-\frac{1}{2}} \big< K, \mathbbm{h}_Q^S \big>.$ This time, the evaluation of a tuple $\mathbf{F}$ in hypergraph $H$ is defined a bit differently, having a Calder\'{o}n-Zygmund kernel instead of Haar functions in the integral expression.

Denote $\mathcal{I}_S := \{ i \in \{ 1, \dots, r \} : S^{(i)} \neq \emptyset \}$. The form $\Lambda_E^S$ will be called \emph{cancellative} if either
\begin{enumerate}
	\item[(C1)] $\max\limits_{1 \leq i \leq r} |S^{(i)}| \geq 2$, or
    \item[(C2)] $\max\limits_{1 \leq i \leq r} |S^{(i)}| = 1$ and there does not exist $l \in \{ 1, \dots, k \}$ such that $\cup_{1 \leq i \leq r} S^{(i)} \subseteq V_l$.
\end{enumerate}
Otherwise, it is \emph{non-cancellative} if
\begin{enumerate}
	\item[(NC)] $\max\limits_{1 \leq i \leq r} |S^{(i)}| = 1$ and there exists $l \in \{ 1, \dots, k \}$ such that $\cup_{1 \leq i \leq r} S^{(i)} \subseteq V_l$.
\end{enumerate}
We can consider the cancellative form as the one consisting of (at least) two different variables that bring cancellation to the whole expression, but are not entangled in any way (so that those cancellations do not depend on or influence each other).

In the proof of the following proposition, again, first we are going to prove a certain bound locally, by taking the sum only over the dyadic cubes belonging to the certain finite convex tree $\mathcal{T}$. Therefore we define the localized version of the form $\Lambda_E^S$ as
    \begin{equation}\Lambda_{E,\mathcal{T}}^S \left( \mathbf{F} \right) := \sum_{Q  \in \mathcal{T}} |Q| \lambda_Q \left[ \mathbf{F} \right]_{H,S,Q}.\label{localtree} \end{equation}
   
Strictly speaking, we are slightly abusing the notation $\lambda_Q$, as it this coefficient is sometimes associated with a dyadic cube in $\mathbb{R}^r$ and sometimes with the corresponding ``diagonal'' dyadic cube in $\mathbb{R}^n$.

\begin{proposition} \label{canc} Let $\Lambda_E^S$ be a cancellative entangled dyadic paraproduct.
\begin{enumerate}[label=(\alph*)]
	\item If (\ref{first}) holds, then for the corresponding coefficients $\lambda = (\lambda_Q)_{Q \in \mathcal{C}_r}$ we have
    $$\left\| \lambda \right\|_{\ell^{\infty}(\mathcal{C}_r)} \lesssim 1.$$
    \item For a finite convex tree $\mathcal{T}$ and a localized cancellative entangled dyadic paraproduct $\Lambda_{E,\mathcal{T}}^S$ we have
    $$|\Lambda_{E,\mathcal{T}}^S \left( \mathbf{F} \right)| \lesssim \left\| \lambda \right\|_{\ell^{\infty}(\mathcal{C}_r)} |Q_{\mathcal{T}}| \prod_{e \in E} \max_{Q \in \mathcal{T} \cup \mathcal{L} \left( \mathcal{T} \right)} [ F_e^{d_e} ]_Q^{\frac{1}{d_e}}.$$
\end{enumerate}
\end{proposition}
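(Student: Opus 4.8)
The plan is to treat statements (a) and (b) separately: (a) is a soft consequence of the hypotheses and of the structure of $K$, while (b) follows from the ``no-kernel'' estimates of Section \ref{trecaprava} by carrying the coefficients $\lambda_Q$ through as a uniform constant.

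For (a), note that the tensor $\mathbbm{h}_Q^S$ equals $|Q|^{1/2}$ times a product of $\mathrm{L}^1$-normalized Haar functions, so $\lambda_Q=|Q|^{-1/2}\langle K,\mathbbm{h}_Q^S\rangle$ is the pairing of $K$ with a function supported on the diagonal cube $\widetilde Q=\prod_{i=1}^r(I^{(i)})^{n_i}$ whose $\mathrm{L}^1$-norm is $1$, whose $\mathrm{L}^\infty$-norm is $|\widetilde Q|^{-1}$, and which carries at least one cancellative factor. The crude bound $|\lambda_Q|\le\|K\|_{\mathrm{L}^\infty}$ already settles the claim, since the implicit constants may depend on $K$. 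To obtain instead a bound governed by the testing constants I would expand each Haar factor through $\mathbbm{h}^1_I=\tfrac12\mathbbm{h}^0_{I_L}-\tfrac12\mathbbm{h}^0_{I_R}$ and $\mathbbm{h}^0_I=\tfrac12\mathbbm{h}^0_{I_L}+\tfrac12\mathbbm{h}^0_{I_R}$, iterated to a depth adapted to the scale of $Q$, writing $\lambda_Q$ as a signed combination --- with coefficients of total absolute value $1$ --- of averages $[K]_P$ over dyadic descendants $P$ of $\widetilde Q$. A descendant $P$ meeting the diagonal $D$ is again a diagonal cube, so $|P|\,[K]_P=\Lambda_E(\mathbbm{1}_{P^\flat},\dots,\mathbbm{1}_{P^\flat})$ with $P^\flat\in\mathcal{C}_r$ the cube underlying $P$, and (\ref{first}) applies; a descendant disjoint from $D$ carries a constant value of $K$ estimated by the size bound (\ref{CalZyg}). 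Together with the boundedness of $K$ near $D$, which guarantees convergence of the sum over scales, this gives $|\lambda_Q|\lesssim 1$ uniformly in $Q$.

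For (b), by homogeneity I may assume $\max_{Q\in\mathcal{T}\cup\mathcal{L}(\mathcal{T})}[F_e^{d_e}]_Q^{1/d_e}=1$ for every $e\in E$, so that it suffices to show $|\Lambda^S_{E,\mathcal{T}}(\mathbf{F})|\lesssim\|\lambda\|_{\ell^\infty(\mathcal{C}_r)}|Q_{\mathcal{T}}|$. Since $|\lambda_Q|\le\|\lambda\|_{\ell^\infty(\mathcal{C}_r)}$ for all $Q$, applying the triangle inequality to the defining sum (\ref{localtree}) reduces this to the unweighted bound $\sum_{Q\in\mathcal{T}}|Q|\,|[\mathbf{F}]_{H,S,Q}|\lesssim|Q_{\mathcal{T}}|$, and here I would rerun the reductions from the proof of Proposition \ref{tree}: factor $[\mathbf{F}]_{H,S,Q}$ over the connected components of $H$, use Lemma \ref{holder} and the normalization to bound by $1$ the factors from components on which $S$ is trivial, and apply the Cauchy--Schwarz and Jensen ``vertex-doubling'' moves to the remaining component(s). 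This is the only step where cancellativity enters: the dichotomy (C1)/(C2) is exactly what lets these moves terminate at a tuple $S^{\sharp}\in\mathcal{S}$ of selected vertices on an enlarged complete labelled hypergraph $H^{\sharp}\in\mathcal{H}_{(n_i)}$ --- in case (C1) one already has two selected vertices in a common class, in case (C2) one Cauchy--Schwarz's across two nontrivial components and doubles one selected vertex in each --- so that $|[\mathbf{F}]_{H,S,Q}|\le[\mathbf{F}]_{H^{\sharp},S^{\sharp},Q}$, possibly after splitting into two such terms; case (NC) is precisely the one these reductions cannot reach, which is why it is excluded from the cancellative forms.

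To finish, Lemma \ref{noeps} supplies a nonnegative averaging paraproduct-type term $\mathcal{B}_{H^{\sharp},Q}$ with $\max_{Q\in\mathcal{T}\cup\mathcal{L}(\mathcal{T})}\mathcal{B}_{H^{\sharp},Q}\lesssim 1$ and $[\mathbf{F}]_{H^{\sharp},S^{\sharp},Q}\le\square\mathcal{B}_{H^{\sharp},Q}$ for every $Q$; telescoping over $\mathcal{T}$ as in the proof of Lemma \ref{enough} gives
\[ \sum_{Q\in\mathcal{T}}|Q|\,\big|[\mathbf{F}]_{H,S,Q}\big|\ \le\ \sum_{Q\in\mathcal{T}}|Q|\,\square\mathcal{B}_{H^{\sharp},Q}\ \lesssim\ |Q_{\mathcal{T}}|, \]
the last step being the identity $\sum_{Q\in\mathcal{T}}|Q|\,\square\mathcal{B}_{H^{\sharp},Q}=\sum_{Q\in\mathcal{L}(\mathcal{T})}|Q|\,\mathcal{B}_{H^{\sharp},Q}-|Q_{\mathcal{T}}|\,\mathcal{B}_{H^{\sharp},Q_{\mathcal{T}}}$ together with $\mathcal{B}_{H^{\sharp},\cdot}\ge0$ and the fact that the leaves of $\mathcal{T}$ are pairwise disjoint subsets of $Q_{\mathcal{T}}$; undoing the normalization yields the assertion. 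I expect the main obstacle to be organizational rather than analytic --- making the component-factorization and vertex-doubling of Proposition \ref{tree}'s proof mesh cleanly with the (C1)/(C2) split while leaving the exponents $d_e$, hence all the H\"older bookkeeping, unchanged; in part (a) the only quantitative point is to balance, across scales, the near-diagonal contribution governed by (\ref{first}) against the off-diagonal one governed by (\ref{CalZyg}).
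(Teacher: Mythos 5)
Your part (b) matches the paper's proof: normalize, pull out $\|\lambda\|_{\ell^\infty}$, then in case (C1) invoke Proposition~\ref{tree} directly (which already handles $S$ with two selected vertices in a common class, across any number of components), and in case (C2) Cauchy--Schwarz the two components carrying selected vertices, doubling each into a hypergraph that falls under (C1). One small correction: Lemma~\ref{noeps} is stated only for a \emph{complete} $r$-partite $r$-uniform hypergraph, and the doubled hypergraphs in case (C2) have two connected components, so one cannot cite Lemma~\ref{noeps} directly; the correct reference is Proposition~\ref{tree}, which handles disconnected hypergraphs by the component factorization you describe (and internally calls Lemmas~\ref{noeps} and~\ref{enough}). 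The telescoping identity you use is Lemma~\ref{enough}, which is fine.

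Part (a) is where the proposal is weakest. First, the ``crude bound $|\lambda_Q|\le\|K\|_{L^\infty}$ settles the claim'' does not: the point of statement (a) is that $\|\lambda\|_{\ell^\infty}$ is controlled by the constant in the testing condition~\eqref{first} together with the constant in the kernel size bound~\eqref{CalZyg}, so that the constants in the equivalences of Theorem~\ref{T1thm} ``mutually depend on each other.'' The quantity $\|K\|_{L^\infty}$ is \emph{not} controlled by those constants (a perfect dyadic CZ kernel can be arbitrarily large near the diagonal even with fixed testing constants), so the crude bound gives a constant of the wrong flavour. Second, the ``iterated to a depth adapted to the scale of $Q$'' followed by ``boundedness of $K$ near $D$, which guarantees convergence of the sum over scales'' is both unnecessary and circular: iterating down the diagonal and invoking $\|K\|_{L^\infty}$ on small diagonal cubes reintroduces exactly the dependence one needs to avoid. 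The correct argument (and the paper's) uses \emph{one} level of expansion $\mathbbm{h}^\epsilon_I=\tfrac{1}{2}\mathbbm{h}^0_{I_L}\pm\tfrac{1}{2}\mathbbm{h}^0_{I_R}$, splitting $\lambda_Q$ into $2^n$ terms indexed by the grandchildren $P=\prod_{i,j}I^{(i)}_j$. Exactly $2^r$ of these are diagonal, and for those, $\langle K,\mathbbm{1}_P\rangle=\Lambda_E(\mathbbm{1}_{P^\flat},\dots,\mathbbm{1}_{P^\flat})$ is controlled by~\eqref{first}. The remaining off-diagonal $P$'s are then handled purely by~\eqref{CalZyg}; here the paper carries out a genuine integral estimate (in spherical coordinates) to control $\int_P|K|$, and some care is needed because the RHS of~\eqref{CalZyg} blows up as one approaches $D$ within $P$. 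You flag this balancing at the very end of your proposal but do not actually execute it, so as written the off-diagonal estimate is a gap. Once these two issues are fixed, your part (a) coincides with the paper's.
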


\begin{proof} (a) Let $Q = \prod_{i=1}^r I^{(i)} \in \mathcal{C}_r$. If we take $F_e := \mathbbm{1}_Q = \otimes_{i=1}^r \mathbbm{1}_{I^{(i)}}$ for each $e \in E$, then our form $\Lambda_E$ takes the form
\begin{equation}| \left< K, \otimes_{i=1}^r \otimes_{j=1}^{n_i} \mathbbm{1}_{I^{(i)}} \right>_{\textnormal{L}^2 (\mathbb{R}^n)} | = | \Lambda_E ( \mathbf{F} ) | \stackrel{(\ref{first})}{\lesssim} |Q|. \label{same}\end{equation}
Notice that both the cancellative and the non-cancellative Haar function can be written in the form $\frac{1}{|I|} \left( \mathbbm{1}_{I_L} \pm \mathbbm{1}_{I_R} \right)$ and, as left and right halves of each dyadic interval are mutually disjoint, we can bound $\lambda_Q$ as
$$| \lambda_Q | \leq |Q|^{-1} \sum_{\substack{1 \leq i \leq r \\ 1 \leq j \leq n_i}} \sum_{I^{(i)}_j \in \{ I^{(i)}_L, I^{(i)}_R \}} | \big< K, \otimes_{i=1}^r \otimes_{j=1}^{n_i} \mathbbm{1}_{I^{(i)}_j} \big>_{\textnormal{L}^2 (\mathbb{R}^n)} |.$$
In the $2^r$ cases when $I^{(i)}_1 = \dots = I^{(i)}_{n_i}$ for each $i \in \left\{ 1, \dots, r \right\}$ we can apply (\ref{same}) to obtain boundedness of each summand by a constant. To show the same bound for the remaining cases we can without loss of generality assume that, for a certain $k \in \left\{ 1, \dots, r \right\}$, we have $I^{(i)}_1 = I^{(i)}_L$ and $I^{(i)}_2 = I^{(i)}_R$ for each $i \in \left\{ 1, \dots, k \right\}$ and $I^{(i)}_1 = \dots = I^{(i)}_{n_i}$ for each $i \in \{ k+1, \dots, r \}$. Let $x^{(i)}_0$ be a common endpoint of $I^{(i)}_L$ and $I^{(i)}_R$ (i.e.\@ a midpoint of $I^{(i)}$) for $i \in \left\{ 1, \dots, k-1 \right\}$ and let $x^{(i)}_j \in I^{(i)}$ for each $(i,j) \in \left\{ 1, \dots, r \right\} \times \left\{ 1, \dots, n_i \right\}$, $(i,j) \notin \left\{ 1, \dots, k-1 \right\} \times \left\{ 1,2 \right\}$. Then for each $i \in \left\{ 1, \dots, k-1 \right\}$ we have
\begin{align*} |x^{(i)}_1 - x^{(i)}_2| &= |x^{(i)}_1 - x^{(i)}_0| + |x^{(i)}_0 - x^{(i)}_2|, \\
|x^{(i)}_j - x^{(i)}_1| + |x^{(i)}_j - x^{(i)}_2| &\geq |x^{(i)}_j - x^{(i)}_0| \,\, \textnormal{for each} \,\, j \in \left\{ 3, \dots, n_i \right\}.
\end{align*}
We can use this to bound the expression under the brackets on the right hand side of (\ref{CalZyg}) from below with
\begin{align*}\sum_{i=1}^r \sum_{1 \leq j_1 < j_2 \leq n_i} |x_{j_1}^{(i)}-x_{j_2}^{(i)}| &\geq \sum_{i=1}^{k-1} \sum_{j=1}^{n_i} |x^{(i)}_j-x^{(i)}_0| + \sum_{j=2}^{n_k} |x^{(k)}_j - x^{(k)}_1| \\
&\geq \bigg( \sum_{i=1}^{k-1} \sum_{j=1}^{n_i} |x^{(i)}_j-x^{(i)}_0|^2 + \sum_{j=2}^{n_k} |x^{(k)}_j - x^{(k)}_1|^2 \bigg)^{\frac{1}{2}}.\end{align*}
Let $\mathbbm{x}_0 := ( \underbrace{x^{(1)}_0, \dots, x^{(1)}_0}_{n_1 \,\, \textnormal{times}}, \dots, \underbrace{x^{(k-1)}_0, \dots, x^{(k-1)}_0}_{n_{k-1} \,\, \textnormal{times}}, \underbrace{x^{(k)}_1, \dots, x^{(k)}_1}_{n_k-1 \,\, \textnormal{times}})$. Note that 
\[( \prod_{i=1}^{k-1} \prod_{j=1}^{n_i} I^{(i)}_j ) \times ( \prod_{j=2}^{n_k} I^{(k)}_j ) \subseteq B ( \mathbbm{x}_0, n |I^{(1)}|),\]
where the latter set is a $(n_1+\dots+n_k-1)$-dimensional ball with the center $\mathbbm{x}_0$ and a radius $n|I^{(1)}_1|$. Using this, the inequality from above that we showed earlier and the integration in spherical coordinates, for all possible choices of $I^{(i)}_j \in \{ I^{(i)}_L, I^{(i)}_R \}$, where $(i,j) \notin \left\{ 1, \dots, k-1 \right\} \times \left\{ 1,2 \right\}$, we get
{\allowdisplaybreaks
\begin{align*} |Q|^{-1} &\bigg| \int_{\mathbb{R}^n} K(\mathbbm{x}) \otimes_{i=1}^r \otimes_{j=1}^{n_i} \mathbbm{1}_{I^{(i)}_j}(\mathbbm{x}) d\mathbbm{x} \bigg| \leq |Q|^{-1} \int_{\prod_{i=1}^r \prod_{j=1}^{n_i} I^{(i)}_j} |K(\mathbbm{x})| d\mathbbm{x} \\
&\leq |Q|^{-1} \int_{\prod_{i=k}^r I^{(i)}} \int_{B ( \mathbbm{x}_0, n |I^{(1)}|)} |K(\mathbbm{x})| \bigg( \prod_{i=1}^{k-1} \prod_{j=1}^{n_i} dx^{(i)}_j \cdot \prod_{j=2}^{n_k} dx^{(i)}_j \bigg) \prod_{i=k}^r dx^{(i)}_1 \\
&\leq |I^{(1)}|^{-r} \int_{\prod_{i=k}^r I^{(i)}} \int_0^{n |I^{(1)}|} t^{r-n} \cdot t^{\sum_{i=1}^k n_i - 2} dt \prod_{i=k}^r dx^{(i)}_1 \\
&\lesssim |I^{(1)}|^{-r} \cdot |I^{(1)}|^{k-1} \cdot |I^{(1)}|^{r-k+1} = 1.
\end{align*}
}
Since the choice of $Q \in \mathcal{C}_r$ was arbitrary, we conclude $\left\| \lambda \right\|_{\ell^{\infty}(\mathcal{C}_r)} \lesssim 1$.

(b) Just as we showed at the beginning of the proof of Proposition \ref{tree}, we can, without loss of generality, assume $\left| Q_{\mathcal{T}} \right| = 1$ and $\max_{Q \in \mathcal{T} \cup \mathcal{L} \left( \mathcal{T} \right)} \left[ F_e^{d_e} \right]_Q^{\frac{1}{d_e}} = 1$ for each $e \in E$. Also, notice that the result for the case (C1) already follows from Proposition \ref{tree}, also using $|\lambda_Q| \leq \left\| \lambda \right\|_{\ell^{\infty}(\mathcal{C}_r)}$ for each $Q \in \mathcal{T}$.

As for the case (C2), let $H_1$ and $H_2$ be the connected components of $H$ such that each of them has at least one selected vertex. If there are $k$ connected components altogether, we can estimate
\begin{align*}| \big[ \mathbf{F}_E \big]_{H,S, Q} | &= \prod_{l=1}^k | \big[ \mathbf{F}_{E_l} \big]_{H_l,S_l, Q} | \leq | \big[ \mathbf{F}_{E_1} \big]_{H_1,S_1, Q} | \cdot | \big[ \mathbf{F}_{E_2} \big]_{H_2,S_2, Q} | \\
&\leq \frac{1}{2} ( | \big[ \mathbf{F}_{E_1} \big]_{H_1,S_1, Q} |^2 + | \big[ \mathbf{F}_{E_2} \big]_{H_2,S_2, Q} |^2 ),\end{align*}
where we used Lemma \ref{holder} applied to the hypergraphs $H_3, \dots, H_k$. We can rewrite this inequality as
$$| \big[ \mathbf{F}_E \big]_{H,S, Q} | \leq \frac{1}{2} ( | \big[ \mathbf{F}_{E_1} \big]_{H_1',S_1', Q} |^2 + | \big[ \mathbf{F}_{E_2} \big]_{H_2',S_2', Q} |^2 ),$$
where $H_l'$ is a new hypergraph consisting of two copies of the hypergraph $H_l$ and, similarly, $S_l'$ has same vertices as $S_l$ along with its analogous copies, for $l=1,2$. Formally, we construct the hypergraph $H_l' = (V_l',E_l')$ such that, for each vertex $v^{(i)} \in V_l$ we add both $v^{(i)}$ and a new vertex $v^{'(i)}$, also keeping the agreement that, for each newly constructed vertices $v_1^{'(i)}$ and $v_2^{'(i)}$, the label $x_{v_1^{'(i)}}$ is the copy of the label $x_{v_2^{'(i)}}$ if and only if the label $x_{v_1^{(i)}}$ is the copy of the label $x_{v_2^{(i)}}$; also, no label of the newly constructed vertex is a copy of the label of any vertex from $V_l$. Analogously, we define
$$E_l' := E_l \cup \{ (v^{'(1)},\dots,v^{'(r)}) : (v^{(1)},\dots,v^{(r)}) \in E \} \,\,\textnormal{and}\,\, S_l' := S_l \cup \{ v^{'(i)} : v^{(i)} \in S_l \}.$$
Note that both of the hypergraphs $H_1'$ and $H_2'$ belong to the case (C1), therefore for each $l=1,2$ we define
$$\Lambda_{E_1', \mathcal{T}}^{S_l'} ( \mathbf{F}_{E_l'} ) := \sum_{Q \in \mathcal{T}} |Q| [ \mathbf{F}_{E_1'} ]_{H_l',S_l',Q}.$$
By Proposition \ref{tree},
$$|\Lambda_{E,\mathcal{T}}^S \left( \mathbf{F}_E \right)| \leq \frac{1}{2} \big( \Lambda_{E_1', \mathcal{T}}^{S_1'} ( \mathbf{F}_{E_1'} ) + \Lambda_{E_2', \mathcal{T}}^{S_2'} ( \mathbf{F}_{E_2'} ) \big) \lesssim 1.$$
Notice that the thresholds $d_e$ required for this result are those thresholds that we get while applying the Proposition \ref{tree} on the modified hypergraphs. However, with this construction the thresholds cannot increase and are still at most equal the quantity defined in (\ref{thresholds}). This completes the proof of the proposition. \qedhere
\end{proof}

\begin{proposition} \label{noncanc} Let $\Lambda_E^S$ be a non-cancellative entangled dyadic paraproduct.
\begin{enumerate}[label=(\alph*)]
	\item If (\ref{second}) holds, then for the corresponding coefficients $\lambda^S = (\lambda^S_Q)_{Q \in \mathcal{C}_r}$ we have
    $$\left\| \lambda^S \right\|_{\textnormal{bmo}} := \sup_{Q_0 \in \mathcal{C}_r} \bigg( \frac{1}{|Q_0|} \sum_{\substack{Q \in \mathcal{C}_r \\ Q \subseteq Q_0}} |Q| |\lambda^S_Q|^2 \bigg)^{\frac{1}{2}} \lesssim 1.$$
    \item For a finite convex tree $\mathcal{T}$ and a localized non-cancellative entangled dyadic paraproduct $\Lambda_{E,\mathcal{T}}^S$ we have
    $$|\Lambda_{E,\mathcal{T}}^S \left( \mathbf{F} \right)| \lesssim \left\| \lambda^S \right\|_{\textnormal{bmo}} |Q_{\mathcal{T}}| \prod_{e \in E} \max_{Q \in \mathcal{T} \cup \mathcal{L} \left( \mathcal{T} \right)} [ F_e^{d_e} ]_Q^{\frac{1}{d_e}}.$$
\end{enumerate}
\end{proposition}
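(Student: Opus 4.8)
The argument splits into two essentially independent parts. Part (b), the tree bound, will be deduced from the already-proved Proposition~\ref{tree} by a single Cauchy--Schwarz step that decouples the Carleson behaviour of the coefficients $\lambda^S$ from the multilinear behaviour of $\mathbf{F}$; part (a), the Carleson ($\textnormal{bmo}$) bound on $\lambda^S$, will follow from the standard equivalence between dyadic BMO and a Carleson condition on Haar coefficients, once $\lambda^S_Q$ is matched with a Haar coefficient of one of the functions $T_e(\mathbbm{1}_{\mathbb{R}^r},\dots,\mathbbm{1}_{\mathbb{R}^r})$.

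For part (a): since $\Lambda_E^S$ is non-cancellative, condition (NC) puts all selected vertices into a single connected component $V_l$ with at most one per class, and because $V_l$ is complete we may fix an edge $e_0\in E_l$ passing through every selected vertex. Unfolding $\lambda^S_Q=|Q|^{-1/2}\langle K,\mathbbm{h}^S_Q\rangle$ for the relevant diagonal cube $Q$, associated with $R=\prod_i I^{(i)}\in\mathcal{C}_r$, every Haar factor attached to a vertex outside $e_0$ is non-cancellative, whereas inside $e_0$ the factor in class $i$ is cancellative precisely when $i\in\mathcal{I}_S$. Integrating out all variables not belonging to $e_0$ (each $\mathbbm{h}^0_I$ integrates to $1$) identifies $\lambda^S_Q$, up to an absolute constant and a sign depending only on $S$, with the Haar-type coefficient of $T_{e_0}(\mathbbm{1}_{\mathbb{R}^r},\dots,\mathbbm{1}_{\mathbb{R}^r})$ on the cube $R$ that is cancellative in the directions of $\mathcal{I}_S$ and averaging in the others. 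Granting this, the Carleson sum $\frac{1}{|Q_0|}\sum_{Q\subseteq Q_0}|Q|\,|\lambda^S_Q|^2$ is controlled by the dyadic square function of $T_{e_0}(\mathbbm{1},\dots,\mathbbm{1})$ over $Q_0$, hence by $\|T_{e_0}(\mathbbm{1},\dots,\mathbbm{1})\|_{\textnormal{BMO}(\mathbb{R}^r)}^2\lesssim1$ by \eqref{second}; taking the supremum over $Q_0\in\mathcal{C}_r$ gives $\|\lambda^S\|_{\textnormal{bmo}}\lesssim1$.

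For part (b): by homogeneity we may normalize $|Q_{\mathcal{T}}|=1$ and $\max_{Q\in\mathcal{T}\cup\mathcal{L}(\mathcal{T})}[F_e^{d_e}]_Q^{1/d_e}=1$ for every $e\in E$, so it suffices to prove $|\Lambda_{E,\mathcal{T}}^S(\mathbf{F})|\lesssim\|\lambda^S\|_{\textnormal{bmo}}$. From \eqref{localtree} and Cauchy--Schwarz with weights $|Q|$,
\[
|\Lambda_{E,\mathcal{T}}^S(\mathbf{F})|\le\Big(\sum_{Q\in\mathcal{T}}|Q|\,|\lambda_Q|^2\Big)^{1/2}\Big(\sum_{Q\in\mathcal{T}}|Q|\,[\mathbf{F}]_{H,S,Q}^2\Big)^{1/2}.
\]
All cubes of $\mathcal{T}$ lie inside $Q_{\mathcal{T}}$, so the first factor is at most $\|\lambda^S\|_{\textnormal{bmo}}|Q_{\mathcal{T}}|^{1/2}=\|\lambda^S\|_{\textnormal{bmo}}$ by the definition of $\|\cdot\|_{\textnormal{bmo}}$. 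For the second factor, let $H'$ be the disjoint union of $H$ with a copy $\widetilde{H}$ of itself, equipped with fresh (pairwise distinct) edge labels attached to copies of the very same functions, let $S'$ be $S$ together with its copy, and let $\mathbf{F}'$ be $\mathbf{F}$ together with the copied functions. Because the integral defining $[\mathbf{F}']_{H',S',Q}$ factors over the two halves, $[\mathbf{F}']_{H',S',Q}=[\mathbf{F}]_{H,S,Q}^2\ge0$. The connected components of $H'$ are those of $H$ together with their copies, all still complete, and the thresholds $d_e$ are unchanged; moreover (NC) forces $|S^{(i)}|\le1$ for all $i$ with $\mathcal{I}_S\ne\emptyset$, so $|S'^{(i)}|=2$ for $i\in\mathcal{I}_S$ and the hypotheses of Proposition~\ref{tree} are met by $(H',S',\mathbf{F}')$. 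Applying that proposition, and noting that each edge of $H'$ carries one of the functions $F_e$ with the same threshold so that the product of the relevant maxima equals $1$, we obtain
\[
\sum_{Q\in\mathcal{T}}|Q|\,[\mathbf{F}]_{H,S,Q}^2=\sum_{Q\in\mathcal{T}}|Q|\,[\mathbf{F}']_{H',S',Q}\lesssim|Q_{\mathcal{T}}|=1.
\]
Combining the two factors gives $|\Lambda_{E,\mathcal{T}}^S(\mathbf{F})|\lesssim\|\lambda^S\|_{\textnormal{bmo}}$, and removing the normalization by homogeneity yields part (b).

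The step I expect to be the main obstacle is the identification in part (a): reconciling the cube-local averaging over the variables outside $e_0$ that is built into $\mathbbm{h}^S_Q$ with the averaging over all of $\mathbb{R}$ appearing in the definition of $T_{e_0}(\mathbbm{1},\dots,\mathbbm{1})$, and thereby exhibiting $\lambda^S$ as (a subfamily of) Haar coefficients of a genuine BMO function. This is routine in the perfect-dyadic model but requires careful bookkeeping with the facts that $K$ is constant off the diagonal and compactly supported, in the spirit of \cite{AHMTT02}; everything else reduces to Proposition~\ref{tree} or to the classical dyadic BMO--Carleson equivalence.
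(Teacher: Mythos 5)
Your proposal matches the paper's argument in both parts: for (b) you perform the same Cauchy--Schwarz split, use the Carleson/bmo bound on the first factor, and control the second factor by doubling the hypergraph (with a fresh copy of every vertex, edge, and function) to land in case (C1) and apply Proposition~\ref{tree}; for (a) you likewise choose an edge $e_0$ of the complete component $V_l$ through all selected vertices (which is exactly where (NC) and completeness are used), expand $K$ in the tensor Haar system, integrate out the variables off $e_0$ so that only $\mathbbm{h}^0$ survives, and recognize the resulting Haar expansion of $T_{e_0}(\mathbbm{1},\dots,\mathbbm{1})$, after which $\|\lambda^S\|_{\textnormal{bmo}}$ is dominated by $\|T_{e_0}(\mathbbm{1},\dots,\mathbbm{1})\|_{\textnormal{BMO}(\mathbb{R}^r)}$ via the square-function characterization of dyadic BMO. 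This is exactly the paper's route, so no comparison of alternatives is needed.
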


\begin{proof} (a) Let us see what we can conclude with the assumption of (\ref{second}). Fix $e_0 = (v^{(1)}, \dots, v^{(r)}) \in E$. By the definition of the operator $T_{e_0}$ given in (\ref{pomt}), in this case with kernel defined as in (\ref{kernel}), we have
    \begin{align*}T_{e_0} \left( \mathbf{F}_{E \backslash \left\{ e_0 \right\}} \right) (\mathbbm{x}_{e_0}) =& \sum_{\substack{S = (S^{(i)})_{i=1}^r \\ \left( \forall i \in \left\{ 1, \dots, r \right\} \right) S^{(i)} \subseteq V^{(i)} \\ \left( \exists i_0 \in \left\{ 1, \dots, r \right\} \right) \left| S^{(i_0)} \right| \neq 0}} \sum_{Q = \prod_{i=1}^r \left( I^{(i)} \right)^{n_i} \in \mathcal{C}_n} |Q|^{\frac{1}{2}}\lambda^S_Q \\
    &\int_{\mathbb{R}^{n-r}} \bigg( \prod_{e \in E \backslash \left\{ e_0 \right\}} F_e (\mathbbm{x}_e) \bigg) \mathbbm{h}_Q^S(\mathbbm{x}) \prod_{v \in V \backslash e_0} dx_v.\end{align*}
    We turn our attention to the case when $F_e = \mathbbm{1}_{\mathbb{R}^r}$ for each $e \in E \backslash \{ e_0 \}$. The function appearing under the integral sign in that case is $\mathbbm{h}^S_Q$ which, up to the constant $|Q|^{\frac{1}{2}}$, equals the product of functions of one variable $\mathbbm{h}^1_{I_i}$ and $\mathbbm{h}^0_{I_i}$ for each $i \in \{ 1, \dots, r \}$, where $Q = \prod_{i=1}^r (I^{(i)})^{n_i} \in \mathcal{C}_n$. Depending on whether the cancellation appears or not, the function $T_{e_0} \left( \mathbbm{1}_{\mathbb{R}^r}, \dots, \mathbbm{1}_{\mathbb{R}^r} \right)$ can either be identically equal to zero or, if $v^{(1)}, \dots, v^{(s)}$ are all the selected vertices for $s \in \mathbb{N}$, it can be given as 
    \begin{align*}T_{e_0} \left( \mathbbm{1}_{\mathbb{R}^r}, \dots, \mathbbm{1}_{\mathbb{R}^r} \right) =& \sum_{\substack{S = (S^{(1)}, \dots, S^{(s)}, \emptyset, \dots, \emptyset) \\ \left( \forall i \in \left\{ 1, \dots, s \right\} \right) S^{(i)} \subseteq \{ v^{(i)} \}\\ \left( \exists i_0 \in \left\{ 1, \dots, s \right\} \right) \left| S^{(i_0)} \right| \neq 0}} \sum_{Q = \prod_{i=1}^r \left( I^{(i)} \right)^{n_i} \in \mathcal{C}_n} |Q| \lambda^S_Q |I^{(1)}|^{r-n} \bigotimes_{i=1}^r \mathbbm{h}_{I^{(i)}}^{v^{(i)}},\end{align*}
    where we define $\mathbbm{h}_{I^{(i)}}^{v^{(i)}}$ as $\mathbbm{h}_{I^{(i)}}^1$ if $1 \leq i \leq s$ or as $\mathbbm{h}_{I^{(i)}}^0$ otherwise. From the definition of the dyadic BMO-seminorm, taking care of the cancellation again (which happens to appear in at least one variable of each summand of the above expression), we have
    \begin{align*}\| T_{e_0} ( \mathbbm{1}_{\mathbb{R}^r}, \dots, \mathbbm{1}_{\mathbb{R}^r} &) \|_{\textnormal{BMO}(\mathbb{R}^r)} \\
    &= \sup_{Q_0 \in \mathcal{C}_r} \bigg( \frac{1}{|Q_0|} \sum_{\substack{S = (S^{(1)}, \dots, S^{(s)}, \emptyset, \dots, \emptyset) \\ \left( \forall i \in \left\{ 1, \dots, s \right\} \right) S^{(i)} \subseteq \{ v^{(i)} \}\\ \left( \exists i_0 \in \left\{ 1, \dots, s \right\} \right) \left| S^{(i_0)} \right| \neq 0}} \sum_{\substack{Q = \prod_{i=1}^r (I^{(i)})^{n_i} \in \mathcal{C}_n \\ \prod_{i=1}^r I^{(i)} \subseteq Q_0}}  |I^{(1)}|^r | \lambda^S_Q |^2 \bigg)^{\frac{1}{2}}.\end{align*}
    From this, recognizing the expression inside the brackets as the bmo-norms of the coefficients, it follows that from each such choice of $S$ we have
    $$\left\| \lambda^S \right\|_{\textnormal{bmo}} \leq \left\| T_{e_0} \left( \mathbbm{1}_{\mathbb{R}^r}, \dots, \mathbbm{1}_{\mathbb{R}^r} \right) \right\|_{\textnormal{BMO}(\mathbb{R}^r)} \lesssim 1.$$
    
    Notice that for the preceding proof we were required to have an edge $e_0 \in E$ that contains all of the selected vertices from the starting hypergraph, which is precisely the condition (NC) together with completeness of the corresponding hypergraph component.
    
	(b) Without loss of generality we can assume that
	$$\left| Q_{\mathcal{T}} \right| = 1 \,\, \textnormal{and} \,\, \max_{Q \in \mathcal{T} \cup \mathcal{L} \left( \mathcal{T} \right)} [ F_e^{d_e} ]_Q^{\frac{1}{d_e}} = 1 \,\, \textnormal{for each} \,\, e \in E.$$ By the Cauchy-Schwarz inequality we have
    $$|\Lambda_{E,\mathcal{T}}^S \left( \mathbf{F} \right)| = \sum_{Q \in \mathcal{T}} |Q| |\lambda_Q| \left[ \mathbf{F} \right]_{H,S,Q} \leq \big( \sum_{Q \in \mathcal{T}} |Q| |\lambda_Q|^2 \big)^{\frac{1}{2}} \big( \sum_{Q \in \mathcal{T}} |Q| \left[ \mathbf{F} \right]_{H,S,Q}^2 \big)^{\frac{1}{2}}.$$
    We can notice that
    $$\sum_{Q \in \mathcal{T}} |Q| |\lambda_Q|^2 \leq \sum_{\substack{Q \in \mathcal{C}_r \\ Q \subseteq Q_{\mathcal{T}}}} |Q| |\lambda_Q|^2 \leq |Q_{\mathcal{T}}| \left\| \lambda^S \right\|_{\textnormal{bmo}}^2 = \left\| \lambda^S \right\|_{\textnormal{bmo}}^2.$$
	Let $H'$ be a hypergraph consisting of two copies of the hypergraph $H$ (up to the labels of vertices and edges) and let $S'$ be an $r$-tuple consisting of the vertices from $S$ and their corresponding copies. This hypergraph belongs to the case (C1), for which we already have
    $$\sum_{Q \in \mathcal{T}} |Q| \left[ \mathbf{F} \right]_{H,S,Q}^2 = \sum_{Q \in \mathcal{T}} |Q| \left[ \mathbf{F} \right]_{H',S',Q} \lesssim 1.$$
    All together, we achieve the desired claim: $\Lambda_{E,\mathcal{T}}^S \left( \mathbf{F} \right) \lesssim \left\| \lambda^S \right\|_{\textnormal{bmo}}^2$. \qedhere
\end{proof}

\section{Proof of the T(1) theorem} \label{petaprava}

\begin{proof}[Proof of Theorem \ref{T1thm}] (a) $\Rightarrow$ (e) For each $Q_0 \in \mathcal{C}_r$ denote $\mathcal{D}(Q_0) := \left\{ Q \in \mathcal{C}_r : Q \subseteq Q_0 \right\}$ and $M :=  \frac{\log_2(2|E|)}{\min_{e \in E} d_e}$. For a fixed $e \in E$ let us define
$$\mathcal{I}_{Q_0}^e := \{ Q \in \mathcal{D}(Q_0) : [F_e^{d_e}]_Q^{\frac{1}{d_e}} > 2^M [F_e^{d_e}]_{Q_0}^{\frac{1}{d_e}} \}.$$
Then define $\mathcal{M}_{Q_0}$ to be the collection of maximal cubes in $\cup_{e \in E} \mathcal{I}_{Q_0}^e$ and finally set $\mathcal{M}_{Q_0}^e := \mathcal{M}_{Q_0} \cap \mathcal{I}_{Q_0}^e$. Consequently, $\mathcal{M}_{Q_0} = \cup_{e \in E} \mathcal{M}_{Q_0}^e$, but the union does not have to be disjoint. From these definitions we have
$$\sum_{Q \in \mathcal{M}_{Q_0}^e}\!\!\! |Q| \leq \!\!\!\sum_{Q \in \mathcal{M}_{Q_0}^e}\!\!\! 2^{-Md_e} \big[ F_e^{d_e} \big]_{Q_0}^{-1} \int_Q F_e(\mathbbm{x}_e)^{d_e}d\mathbbm{x}_e \leq (2|E|)^{-1}\big[ F_e^{d_e} \big]_{Q_0}^{-1} \int_{Q_0} F_e(\mathbbm{x}_e)^{d_e}d\mathbbm{x}_e = \frac{|Q_0|}{2|E|}.$$
In the second inequality we used the fact that the elements of $\mathcal{M}_{Q_0}^{e}$ are mutually disjoint (by maximality), allowing us to increase the sum to the integral over the largest cube $Q_0$. This gives us
\begin{equation}\sum_{Q \in \mathcal{M}_{Q_0}} |Q| \leq \sum_{e \in E} \sum_{Q \in \mathcal{M}_{Q_0}^e} |Q| \leq \frac{|Q_0|}{2}. \label{eq:halfthemeasure}\end{equation}
Now, choose $Q_1, \dots, Q_{2^r} \in \mathcal{C}_r$ such that $\cup_{i=1}^{2^r} Q_i \supset \cup_{e \in E} \, \textnormal{supp} \, F_e$. Indeed, if the supports of functions $F_e$ are contained in more than one quadrant of the space $\mathbb{R}^r$, we may need at most $2^r$ dyadic cubes that cover their supports. For each $i \in \{ 1, \dots, 2^r \}$ we inductively define
\begin{align*}
 \mathcal{S}_{\mathcal{D},i,0} := \{ Q_i \},\quad \mathcal{S}_{\mathcal{D},i,n} := \cup_{Q \in \mathcal{S}_{\mathcal{D},i,n-1}} \mathcal{M}_Q, n \in \mathbb{N},\\
  \mathcal{S}_{\mathcal{D},i} := \cup_{n=0}^{\infty} \mathcal{S}_{\mathcal{D},i,n}, i \in \{ 1, \dots, 2^r \},\quad \mathcal{S}_{\mathcal{D}} := \cup_{i=1}^{2^r} \mathcal{S}_{\mathcal{D},i}.
\end{align*}
Let us notice that $\mathcal{S}_{\mathcal{D}}$ is a sparse family of dyadic cubes. Indeed, for any $Q \in \mathcal{S}_{\mathcal{D}}$ let $E_Q := Q \backslash \big( \cup_{Q' \in \mathcal{M}_Q} Q' \big).$ For each two dyadic cubes $Q_1, Q_2 \in \mathcal{S}_{\mathcal{D}},$ $Q_1 \neq Q_2$, we have that they are either mutually disjoint, therefore $E_{Q_1}$ and $E_{Q_2}$ are mutually disjoint as well, or, without loss of generality, $Q_2 \subseteq Q_1$, in which case, by construction, $Q_2 \subseteq Q'_1 \in \mathcal{I}_{Q_1}$, so $Q_2 \cap E_{Q_1} = \emptyset$, therefore $E_{Q_1}$ and $E_{Q_2}$ are again mutually disjoint. Also, for each $Q \in \mathcal{S}_{\mathcal{D}}$, by \eqref{eq:halfthemeasure} we have
$$|E_Q| = |Q| - \sum_{Q' \in \mathcal{I}_Q} |Q'| \geq \frac{1}{2} |Q|.$$
Now, for each $Q \in \mathcal{S}_{\mathcal{D}}$ and a fixed $N \in \mathbb{N}$ let us define
$$\mathcal{T}_Q^N := \mathcal{C}^N \cap \mathcal{D}(Q) \backslash \big( \cup_{Q' \in \mathcal{M}_{Q}} \mathcal{D}(Q') \big),$$
where
$$\mathcal{C}^N := \bigg\{ \prod_{i=1}^r I_i \in \mathcal{C}_r : | I_1 | = \dots = | I_r | \geq 2^{-N} \bigg\}.$$
Notice that $\mathcal{T}_Q^N$ is a finite convex tree where the set of leaves $\mathcal{L}(\mathcal{T}_Q^N)$ are either elements of $\mathcal{M}_Q$ or they are dyadic cubes with length of each side equal to $2^{-N-1}$. An application of Propositions \ref{canc} and \ref{noncanc} gives us
$$\big|\Lambda_{E,\mathcal{T}_Q^N}^S \left( \mathbf{F} \right)\big| \lesssim |Q| \prod_{e \in E} \max_{Q' \in \mathcal{T}_Q^N \cup \mathcal{L} \left( \mathcal{T}_Q^N \right)} [ F_e^{d_e} ]_{Q'}^{\frac{1}{d_e}}.$$
If $Q' \in \mathcal{T}_Q^N$ then $Q' \notin \mathcal{M}_Q^e$, which means that $[ F_e^{d_e} ]_{Q'}^{\frac{1}{d_e}} \leq 2^M [ F_e^{d_e} ]_Q^{\frac{1}{d_e}}$. If $Q' \in \mathcal{L} ( \mathcal{T}_Q^N ) \cap \mathcal{S}_{\mathcal{D}}$ and $Q'_P$ is a parent of $Q'$, then by maximality we have
$$[ F_e^{d_e} ]_{Q'}^{\frac{1}{d_e}} \leq 2^{\frac{r}{d_e}} [ F_e^{d_e} ]_{Q'_P}^{\frac{1}{d_e}} \leq 2^{r+M} [ F_e^{d_e} ]_Q^{\frac{1}{d_e}}.$$
The remaining option is if each side of $Q'$ has length equal to $2^{-N-1}$. But even then its parent $Q'_P$ satisfies $Q'_P \notin \mathcal{M}_Q^e$, so that the above inequality is valid again. Altogether,
$$|\Lambda_{E,\mathcal{T}_Q^N}^S| \left( \mathbf{F} \right) \lesssim 2^{|E|M} |Q| \prod_{e \in E} [ F_e^{d_e} ]_Q^{\frac{1}{d_e}}.$$
This holds for any $Q \in \mathcal{S}_{\mathcal{D}}$. Note that the trees $\mathcal{T}_Q^N$, $Q \in \mathcal{S}_{\mathcal{D}}$ form a partition of $\left( \cup_{i=1}^{2^r} \mathcal{D}(Q_i) \right) \cap \mathcal{C}^N$, therefore
$$|\Lambda_{E,\left( \cup_{i=1}^{2^r} \mathcal{D}(Q_i) \right) \cap \mathcal{C}^N}^S \left( \mathbf{F} \right)| \lesssim 2^{|E|M}  \sum_{Q \in \mathcal{S}_{\mathcal{D}}} |Q| \prod_{e \in E} [ F_e^{d_e} ]_Q^{\frac{1}{d_e}}.$$
As the right side of the inequality and the inequality itself does not depend on $N$, with $N \rightarrow \infty$ we get
$$|\Lambda_{E,\cup_{i=1}^{2^r} \mathcal{D}(Q_i)}^S \left( \mathbf{F} \right)| \lesssim 2^{|E|M}  \sum_{Q \in \mathcal{S}_{\mathcal{D}}} |Q| \prod_{e \in E} [ F_e^{d_e} ]_Q^{\frac{1}{d_e}}.$$
Note that for $Q \in \mathcal{C}_r, Q \notin \cup_{i=1}^{2^r} \mathcal{D}(Q_i)$ we have that the summand in (\ref{localtree}) equals zero, so this inequality can be rewritten as
$$| \Lambda_E( \mathbf{F} ) | \lesssim \Theta_{\mathcal{S}_{\mathcal{D}}}( \mathbf{F} ),$$
with a sparse form given associated with the sparse family $\mathcal{S}_{\mathcal{D}}$. 

(e) $\Rightarrow$ (f) Let $\Theta_{\mathcal{S}}$ be the sparse form that bounds the form $\Lambda_E$. It will be enough to prove the analogous inequality for $\Theta_{\mathcal{S}}$. Once again, it is sufficient to work with nonnegative functions $F_e$. For each $e \in E$ let $h_e := w_e^{\frac{-d_e}{p_e-d_e}}$ and let $G_e$ be a function such that $F_e = G_e h_e^{\frac{1}{d_e}}$. Note that we have $\| F_e \|_{\textnormal{L}^{p_e}(w_e)} = \| G_e \|_{\textnormal{L}^{p_e}(h_e)}$. Let us rewrite the form $\Theta_{\mathcal{S}}$ in the following way:
\begin{align}\Theta_{\mathcal{S}}( \mathbf{F} ) = \sum_{Q \in \mathcal{S}} &\bigg( \prod_{e \in E} [ h_e ]_Q^{\frac{1}{d_e}-\frac{1}{p_e}} \bigg) \bigg( |Q| \prod_{e \in E} \bigg( \frac{[h_e]_Q}{|E_Q|[h_e]_{E_Q}} \bigg)^{\frac{1}{p_e}} \bigg) \nonumber\\
&\bigg( \prod_{e \in E} (|E_Q|[h_e]_{E_Q})^{\frac{1}{p_e}} \bigg( \frac{[G_e^{d_e}h_e]_Q}{[h_e]_Q} \bigg)^{\frac{1}{d_e}} \bigg). \label{eq:weightedsplitting}\end{align}
We can see directly from the definition of the Muckenhoup constant that $\prod_{e \in E} [ h_e ]_Q^{\frac{1}{d_e}-\frac{1}{p_e}} \leq [\mathbf{w}]_{\mathbf{p},\mathbf{d}}$. To bound the expression inside the second pair of parentheses, first notice that, by the H\"older inequality, by (\ref{condMuck}) and along with with $r_e := \frac{p_e-d_e}{p_ed_e}$ for each $e \in E$, $r := \sum_{e \in E} r_e$ and the constant $c$ from Definition \ref{defsparse} for the given family $\mathcal{S}$ we can see that
$$\prod_{e \in E} (|E_Q|[h_e]_{E_Q})^{\frac{r_e}{r}} = \prod_{e \in E} \bigg( \int_{E_Q} h_e(x)dx \bigg)^{\frac{r_e}{r}} \geq \int_{E_Q} \prod_{e \in E} h_e(x)^{\frac{r_e}{r}}dx = |E_Q| \geq c|Q|.$$ 
for each $Q \in \mathcal{S}$. Denote $m := \max_{e \in E} \frac{1}{r_ep_e}$. This gives us
$$\prod_{e \in E} \bigg( \frac{|Q|[h_e]_Q}{|E_Q|[h_e]_{E_Q}} \bigg)^{\frac{1}{p_e}} \leq \prod_{e \in E} \bigg( \frac{|Q|}{|E_Q|[h_e]_{E_Q}} \bigg)^{r_e m} [h_e]_Q^{r_e m} \leq c^{-rm} [\mathbf{w}]_{\mathbf{p},\mathbf{d}}^m.$$
It remains to note that we have already obtained one power of the Muckenhoupt constant and observe that
\[ 1 + m = 1 + \max_{e\in E} \frac{d_e}{p_e-d_e} = \max_{e\in E} \frac{p_e}{p_e-d_e}. \]
We have
$$\bigg( \prod_{e \in E} [ h_e ]_Q^{\frac{1}{d_e}-\frac{1}{p_e}} \bigg) |Q| \prod_{e \in E} \bigg( \frac{[h_e]_Q}{|E_Q|[h_e]_{E_Q}} \bigg)^{\frac{1}{p_e}} \lesssim [\mathbf{w}]_{\mathbf{p},\mathbf{d}}^{\max_{e \in E} \frac{p_e}{p_e-d_e}},$$
with the implicit constant depending only on $c$. Note that the expressions in the first two parentheses of \eqref{eq:weightedsplitting} are bounded uniformly in $Q \in \mathcal{S}$. Now let us define the weighted maximal operator with
$$M_{d,w}F(x_1,\dots,x_r) := \sup_{\substack{Q \in \mathcal{C}_r \\ (x_1,\dots,x_r) \in Q}} \left( \frac{[|F|^d w]_Q}{[w]_Q} \right)^{\frac{1}{d}}.$$
This type of operator is bounded on the weighted space $\textup{L}^p(w)$ for each $p>d$, which is a result from \cite{M12}. Since the first  two terms in \eqref{eq:weightedsplitting} are bounded independently of $Q$, we turn to the sum of the third terms over $Q \in \mathcal{S}$:
\begin{align*}\sum_{Q \in \mathcal{S}} \prod_{e \in E} \bigg( \int_{E_Q} h_e(x) dx \bigg)^{\frac{1}{p_e}} \bigg( \frac{[ G_e^{d_e}h_e ]_Q}{[h_e]_Q} \bigg)^{\frac{1}{d_e}} &= \sum_{Q \in \mathcal{S}} \prod_{e \in E} \bigg( \int_{E_Q} \bigg( \frac{[G_e^{d_e}h_e]_Q}{[h_e]_Q} \bigg)^{\frac{p_e}{d_e}} h_e(x)dx \bigg)^{\frac{1}{p_e}} \\
&\leq \sum_{Q \in \mathcal{S}} \prod_{e \in E} \bigg( \int_{E_Q} (M_{d_e,h_e}G_e)(x)^{p_e} h_e(x)dx \bigg)^{\frac{1}{p_e}}.
\end{align*}
By H\"{o}lder's inequality for the summation in $Q$, the disjointness of $E_Q$ and boundedness of $M_{d_e,h_e}$ the last expression is at most
\begin{align*}\prod_{e \in E} \bigg( \sum_{Q \in \mathcal{S}} \int_{E_Q} (M_{d_e,h_e}G_e)(x)^{p_e} h_e(x)dx \bigg)^{\frac{1}{p_e}} &\leq \prod_{e \in E} \| M_{d_e,h_e}G_e \|_{\textnormal{L}^{p_e}(h_e)} \lesssim \prod_{e \in E} \| G_e \|_{\textnormal{L}^{p_e}(h_e)} \\
&= \prod_{e \in E} \| F_e \|_{\textnormal{L}^{p_e}(w_e)}
\end{align*}
which gives the desired weighted estimate.

(f) $\Rightarrow$ (c) The required bound follows if we use $\mathbf{w} = (w_e)_{e \in E}$ given with $w_e := \mathbbm{1}_{\mathbbm{R}^r}$ for each $e \in E$.

(c) $\Rightarrow$ (d) This implication is trivial.

(d) $\Rightarrow$ (b) Let $p_e \in \left< d_e, \infty \right], e \in E$ be such that (\ref{T1bound}) is valid and let $e_0 \in E$ and $Q \in \mathcal{C}_r$ be arbitrary. Specially, if we take $F_e = \mathbbm{1}_Q$ for each $e \in E \backslash \{ e_0 \}$, we have
\begin{align*}\bigg| \int_{\mathbb{R}^r} T_{e_0}(\mathbbm{1}_Q)_{e \in E \backslash \{ e_0 \}} F_{e_0}(\mathbbm{x}_{e_0})(\mathbbm{x}_{e_0}) d\mathbbm{x}_{e_0} \bigg| &= | \Lambda_E (\mathbf{F}) | \lesssim \| F_{e_0} \|_{\textnormal{L}^{p_{e_0}}(\mathbb{R}^r)} \prod_{e \in E \backslash \{ e_0 \}} \| \mathbbm{1}_Q \|_{\textnormal{L}^{p_e}(\mathbb{R}^r)} \\
&= \| F_{e_0} \|_{\textnormal{L}^{p_{e_0}}(\mathbb{R}^r)} |Q|^{\sum_{e \in E \backslash \{ e_0 \}}\frac{1}{p_e}} = \| F_{e_0} \|_{\textnormal{L}^{p_{e_0}}(\mathbb{R}^r)} |Q|^{\frac{1}{q_{e_0}}},
\end{align*}
where $q_{e_0}$ is the conjugated exponent of $p_{e_0}$. This gives us
$$\| T_{e_0}(\mathbbm{1}_Q)_{e \in E \backslash \{ e_0 \}} \|_{\textnormal{L}^{q_{e_0}}(Q)} \lesssim |Q|^{\frac{1}{q_{e_0}}}.$$
Combining this with Jensen's inequality,
$$\frac{1}{|Q|} \int_Q | T_{e_0}(\mathbbm{1}_Q)_{e \in E \backslash \{ e_0 \}} (\mathbbm{x}_{e_0}) | d\mathbbm{x}_{e_0} \leq \bigg( \frac{1}{|Q|} \int_Q | T_{e_0}(\mathbbm{1}_Q)_{e \in E \backslash \{ e_0 \}} (\mathbbm{x}_{e_0}) |^{q_{e_0}} d\mathbbm{x}_{e_0} \bigg)^{\frac{1}{q_{e_0}}} \lesssim 1,$$
which shows that the condition (\ref{assum}) is valid. 

(b) $\Rightarrow$ (a) Note that from the inequality (\ref{assum}) for any $Q \in \mathcal{C}_r$ we have
\begin{align*}|\Lambda_E((\mathbbm{1}_Q)_{e \in E} )| &= \bigg| \int_{\mathbb{R}^r} T_{e_0} \left( (\mathbbm{1}_Q)_{e \in E \backslash \left\{ e_0 \right\}} \right) (\mathbbm{x}_{e_0}) \mathbbm{1}_Q(\mathbbm{x}_{e_0}) d\mathbbm{x}_{e_0} \bigg| \\
&\leq \| T_{e_0} \left( (\mathbbm{1}_Q)_{e \in E \backslash \left\{ e_0 \right\}} \right) \|_{\textnormal{L}^1(Q)} \lesssim |Q|.\end{align*}
This shows us (\ref{first}) from the statement of Theorem \ref{T1thm}. Take $r > 0$ such that the support of the kernel $K$ is contained in $\left[ -r,r \right]^n$. Let $e_0 \in E$ and $Q_{e_0} \in \mathcal{C}_r$ be arbitrary. Define
$$\mathcal{S}(Q_{e_0}) := \{ Q' \in \mathcal{C}_r : |Q'| = |Q_{e_0}| \,\, \textnormal{and} \,\, | Q' \cap \left[ -r,r \right]^n | > 0 \}.$$
Note that 
\begin{align*}T_{e_0} (( \mathbbm{1}_{\mathbb{R}^r} )_{e \in E \backslash \{ e_0 \}} )(\mathbbm{x}_{e_0}) \mathbbm{1}_{Q_{e_0}}(\mathbbm{x}_{e_0}) &= \sum_{e \in E \backslash \{ e_0 \}} \sum_{Q_e \in \mathcal{S}(Q)} T_{e_0} (( \mathbbm{1}_{Q_{e'}} )_{e' \in E \backslash \{ e_0 \}} )(\mathbbm{x}_{e_0}) \mathbbm{1}_{Q_{e_0}}(\mathbbm{x}_{e_0}) \\
&= \sum_{e \in E \backslash \{ e_0 \}} \sum_{Q_e \in \mathcal{S}(Q)} \int_{\mathbb{R}^{n-r}} \bigg( \prod_{e' \in E} \mathbbm{1}_{Q_{e'}} (\mathbbm{x}_{e'}) \bigg) K(\mathbbm{x}) \prod_{v \in V \backslash e_0} dx_v.
\end{align*}
As the cubes $Q_{e'}, e' \in E$ all have equal Lebesgue measure, they are either identical or disjoint, which means that each integral expression is of the form
$$\int_{\mathbb{R}^{n-r}} \bigg( \prod_{i=1}^r \prod_{v^{(i)} \in V^{(i)}} \mathbbm{1}_{I_{v^{(i)}}} (x_{v^{(i)}}) \bigg) K(\mathbbm{x}) \prod_{v \in V \backslash e_0} dx_v,$$
for dyadic intervals $I_{v^{(i)}}, v^{(i)} \in V^{(i)}, i = 1, \dots, r$ such that $\prod_{i=1}^r \prod_{v^{(i)} \in V^{(i)}} I_{v^{(i)}} = Q_{e_0}$. As $K$ is constant on dyadic cubes $\prod_{i=1}^r \prod_{v^{(i)} \in V^{(i)}} I_{v^{(i)}}$ for which $I_{v^{(i_1)}} \neq I_{v^{(i_2)}}$ for some $v^{(i_1)}, v^{(i_2)} \in V^{(i)}$ and some $i \in \{ 1, \dots, r \}$, i.e.\@ on those cubes that do not intersect the diagonal, the above expression is the constant that coincides with its average over the same cube (the integral over the same cube divided by its Lebesgue measure). In case that for certain dyadic intervals $I_1, \dots, I_r$, we have $I_{v^{(i_1)}} = I_{v^{(i_2)}} = I_i$ for every $v^{(i_1)}, v^{(i_2)} \in V^{(i)}$ and $i \in \{ 1, \dots, r \}$, we can realize that $Q_{e_0} = I_1^{n_1} \times \dots \times I_r^{n_r} = Q_e$ for each $e \in E$, therefore the above expression takes the form
\begin{align*}T_{e_0} (( \mathbbm{1}_{Q_{e'}} )_{e' \in E \backslash \{ e_0 \}} )(\mathbbm{x}_{e_0}) \mathbbm{1}_{Q_{e_0}}(\mathbbm{x}_{e_0}) &= \int_{\mathbb{R}^{n-r}} \bigg( \prod_{i=1}^r \prod_{v^{(i)} \in V^{(i)}} \mathbbm{1}_{I_i} (x_{v^{(i)}}) \bigg) K(\mathbbm{x}) \prod_{v \in V \backslash e_0} dx_v \\
&= \int_{\mathbb{R}^{n-r}} \bigg( \prod_{e' \in E} \mathbbm{1}_{Q_{e_0}} (\mathbbm{x}_{e'}) \bigg) K(\mathbbm{x}) \prod_{v \in V \backslash e_0} dx_v \\
&= T_{e_0} (( \mathbbm{1}_{Q_{e_0}} )_{e' \in E \backslash \{ e_0 \}} )(\mathbbm{x}_{e_0}) \mathbbm{1}_{Q_{e_0}}(\mathbbm{x}_{e_0}).\end{align*}
Combining both cases, we get, for each $\mathbbm{x}_{e_0} \in Q_{e_0}$,
\begin{align*} T_{e_0} (( \mathbbm{1}_{\mathbb{R}^r} &)_{e \in E \backslash \{ e_0 \}} )(\mathbbm{x}_{e_0})- \frac{1}{|Q_{e_0}|} \int_{Q_{e_0}} T_{e_0} (( \mathbbm{1}_{\mathbb{R}^r} )_{e \in E \backslash \{ e_0 \}} )(\mathbbm{y}_{e_0})d\mathbbm{y}_{e_0} \\
&= T_{e_0} (( \mathbbm{1}_{Q_{e_0}} )_{e \in E \backslash \{ e_0 \}} )(\mathbbm{x}_{e_0}) - \frac{1}{|Q_{e_0}|} \int_{Q_{e_0}} T_{e_0} (( \mathbbm{1}_{Q_{e_0}} )_{e \in E \backslash \{ e_0 \}} )(\mathbbm{y}_{e_0})d\mathbbm{y}_{e_0}.
\end{align*}
This gives us
\begin{align*}\frac{1}{|Q_{e_0}|}\int_{Q_{e_0}} &\bigg| T_{e_0} (( \mathbbm{1}_{\mathbb{R}^r} )_{e \in E \backslash \{ e_0 \}} )(\mathbbm{x}_{e_0})- \frac{1}{|Q_{e_0}|} \int_{Q_{e_0}} T_{e_0} (( \mathbbm{1}_{\mathbb{R}^r} )_{e \in E \backslash \{ e_0 \}} )(\mathbbm{y}_{e_0})d\mathbbm{y}_{e_0} \bigg| d\mathbbm{x}_{e_0} \\
&\leq \frac{2}{|Q_{e_0}|} \int_{Q_{e_0}} | T_{e_0} (( \mathbbm{1}_{Q_{e_0}} )_{e \in E \backslash \{ e_0 \}} )(\mathbbm{x}_{e_0}) |d\mathbbm{x}_{e_0} \lesssim 1,
\end{align*}
where we applied (\ref{assum}). By the dyadic John-Nirenberg inequality which can be found in \cite{AHMTT02}, the expression
$$\sup_{Q_{e_0} \in \mathcal{C}_r} \frac{1}{|Q_{e_0}|}\int_{Q_{e_0}} \bigg| T_{e_0} (( \mathbbm{1}_{\mathbb{R}^r} )_{e \in E \backslash \{ e_0 \}} )(\mathbbm{x}_{e_0})- \frac{1}{|Q_{e_0}|} \int_{Q_{e_0}} T_{e_0} (( \mathbbm{1}_{\mathbb{R}^r} )_{e \in E \backslash \{ e_0 \}} )(\mathbbm{y}_{e_0})d\mathbbm{y}_{e_0} \bigg| d\mathbbm{x}_{e_0}$$
is comparable with $ \| T_{e_0} (( \mathbbm{1}_{\mathbb{R}^r} )_{e \in E \backslash \{ e_0 \}} ) \|_{\textnormal{BMO}(\mathbb{R}^r)}$. This shows us that (\ref{second}) is valid as well.
\end{proof}

\section*{Acknowledgements}

This work was supported in part by the Croatian Science Foundation under the project UIP-2017-05-4129 (MUNHANAP). The author also acknowledges partial support by the DAAD--MZO bilateral grant \emph{Multilinear singular integrals and applications}. The author would like to thank his advisor Vjekoslav Kova\v{c} for introducing him to the problem, for the constant support and for the maximal patience during the work on this paper. The author would also like to thank Polona Durcik for helpful comments on a preliminary draft of this paper as well as Christoph Thiele for useful suggestions and discussions.


\end{document}